\documentclass[12pt]{article}
\usepackage{amssymb,amsthm,hyperref,amsmath,graphicx,bm,caption,color,booktabs,comment,stmaryrd, threeparttable, subcaption, float, mathtools, float, epstopdf,mathdots}
\usepackage[numbers,sort&compress]{natbib}
\numberwithin{figure}{section}
\numberwithin{table}{section}
\newtheorem{thm}{Theorem}[section]

\newtheorem{prop}{Proposition}[section]
\newtheorem{example}{Example}
\newtheorem{defn}{Definition}[section]
\theoremstyle{remark}
\newtheorem{rem}{Remark}[section]

\newcommand{\ie}{{\it{i.e.}}}

\newcommand{\eg}{{\it{e.g.}}}

\newcommand{\diag}{\mbox{diag}}

\title{Skew-Symmetric Formulations and Energy-Stable Schemes for Compressible Reactive Flow
	Equations}
\author{
	Junbo Gong\thanks{E-mail: dstnyg@outlook.com}\\
	\small{\textit{Department of Applied Mathematics}},\\
	\small{\textit{University of Science and Technology Beijing, Beijing 100083, P.R.China}}
	\and
	Weifeng Zhao\thanks{Corresponding author. E-mail: wfzhao@ustb.edu.cn}\\
	\small{\textit{Department of Applied Mathematics}},\\
	\small{\textit{University of Science and Technology Beijing, Beijing 100083, P.R.China}}}

\date{}

\begin{document}
	\maketitle{}
	
	\begin{abstract}
		This paper develops skew-symmetric formulations and energy stable finite difference schemes for the reactive Euler and reactive Navier--Stokes equations. By introducing suitable transformed variables, we rewrite the governing equations into skew-symmetric forms and obtain continuous energy estimates by standard integration-by-parts arguments. The proposed formulations extend the skew-symmetric framework for compressible flow equations to reactive systems, where the chemical energy and the species equation must be incorporated into the transformed variables. Based on these formulations, we construct semi-discrete schemes with summation-by-parts (SBP) operators. The convective terms are discretized by the non-dissipative part of linear upwind SBP operators, while a nonlinear SBP--WENO operator is used only to supply artificial dissipation near discontinuities. For periodic boundary conditions, the resulting schemes satisfy semi-discrete energy estimates consistent with the continuous analysis. Numerical experiments in one and two space dimensions validate the designed order of accuracy and demonstrate stable performance for problems with shocks, viscous layers, detonation-type waves, and complex wave interactions.
		
	\end{abstract}
	
	\noindent\textbf{Keywords:} reactive flow equations; skew-symmetric formulation; SBP operators.

	\section{Introduction}
	
	Reactive flow equations are fundamental models for compressible flows with chemical reactions, including combustion, detonation, explosion dynamics, and propulsion systems involving chemically reacting media \cite{Williams2018CombustionTheory,PoinsotVeynante2005TNC,FickettDavis2000Detonation,Kailasanath2000DetonationPropulsion}. Their numerical simulation is difficult because convection, diffusion, heat release, and species transport may interact over very different spatial and temporal scales. In detonation and shock-dominated regimes, the solutions may contain strong discontinuities and sharp thermodynamic gradients; at the same time, the chemical source terms and the positivity of physical quantities such as density, pressure, and mass fraction have to be treated carefully. These features make stability a central issue in the design of reliable high-order methods for reactive flows.
	
	Many numerical methods have been developed for reactive flow problems. Gas-kinetic schemes were proposed for multimaterial flows with chemical reactions in \cite{LianXu2000GasKineticReaction}. For detonation waves, high-resolution shock-capturing methods have played an important role; see, for example, the numerical study of pulsating detonations in \cite{HwangFedkiwOsher2000PulsatingDetonation}, high-order WENO computations of two-dimensional detonation waves in \cite{GaoDonLi2012WENO}, and adaptive finite volume simulations on unstructured meshes in \cite{Hu2017DetonationUnstructured}. Other related developments include an HLLC-type method for reactive flows with general equations of state \cite{ZengXiaoNi2016GeneralEOS}, positivity-preserving schemes for production--destruction systems arising in nonequilibrium flows \cite{HuangZhaoShu2019PositivityPD}, front-capturing methods for the reactive Euler equations \cite{XiaoNiWangYang2021FrontCapturing}, and conservative discontinuous Galerkin (DG) methods for the chemically reacting Navier--Stokes equations \cite{JohnsonKercher2020ConservativeDGReactingNS}. These works have greatly improved the accuracy and robustness of numerical simulations for reacting compressible flows.
	
	Another important direction is the construction of schemes satisfying entropy or entropy-like stability properties. For the reactive Euler equations with a single irreversible reaction, it was shown in \cite{Zhao2022EntropyReactiveEuler} that the classical thermodynamic entropy is not strictly convex, and a strictly convex mathematical entropy was introduced to construct entropy preserving and entropy stable finite difference schemes. This idea was further used in \cite{ZuoZhaoLin2024EntropyStableDGReactiveEuler} to design a positivity preserving and oscillation-free entropy stable DG method, which is entropy stable for both the thermodynamic entropy and the strictly convex mathematical entropy. For chemically reacting compressible Euler flows with complex thermodynamics, positivity-preserving and entropy-bounded DG methods were developed in \cite{ChingJohnsonKercher2024EntropyBoundedDGPartI,ChingJohnsonKercher2024EntropyBoundedDGPartII}; the corresponding extension to viscous chemically reacting Navier--Stokes equations was presented in \cite{ChingJohnsonBurrowsHiggsKercher2025EntropyBoundedDGNS}. Related entropy-based high-order methods have also been studied for hypersonic flows with real chemistry \cite{PeyvanShuklaChanKarniadakis2023HypersonicRealChemistry} and high-enthalpy gas flows with internal energy modes \cite{OblapenkoTorrilhon2025EntropyConservativeHighEnthalpy}.
	
	In this paper we follow a different route, namely the skew-symmetric energy method. Skew-symmetric and symmetry-preserving formulations have appeared in several contexts. Vabishchevich derived $L^2$ estimates for transformed hydrodynamic variables \cite{Vabishchevich2007Hydrodynamics}, while conservative or symmetry-preserving skew-symmetric discretizations were investigated for compressible turbulence, aeroacoustics, and plasma moment equations in \cite{RozemaKokVerstappenVeldman2014SymmetryPreserving,ReissSesterhenn2014ConservativeSkewSymmetricNS,HalpernWaltz2018AntisymmetricPlasma}. A systematic stability theory was developed by Nordstr\"om for nonlinear initial-boundary value problems in skew-symmetric form \cite{Nordstrom2022PrimalDual}. This framework was applied to the shallow water equations \cite{NordstromWinters2022}, to the compressible Euler equations \cite{Nordstrom2022SkewEuler}, and then to the compressible Navier--Stokes equations \cite{Nordstrom2024SkewNS}. Boundary treatments for nonlinear problems were further studied in \cite{Nordstrom2024NonlinearBC,Nordstrom2025OpenBC,Nordstrom2026LinearNonlinearBC}, and an extension to incompressible multiphase flow was given in \cite{NordstromMalan2025Multiphase}. In these works, the essential point is that a suitable change of variables and a skew-symmetric splitting of the differential operator yield an energy estimate, and the same structure can be preserved discretely by summation-by-parts (SBP) operators \cite{SvardNordstrom2014SBPReview,FernandezHickenZingg2014SBPSATReview}.
	
	Motivated by these developments, we extend the skew-symmetric framework to reactive flow equations. The extension is not a direct repetition of the non-reactive compressible Euler or Navier--Stokes analysis. Because the total energy contains the chemical energy \(q\rho Y\), the transformed variables must include both the species variable and the heat release. Moreover, the reaction source term must enter the energy estimate with the correct sign. We show that suitable transformed variables lead to skew-symmetric formulations for both the reactive Euler and reactive Navier--Stokes equations. The resulting formulations give continuous energy estimates by integration by parts, without using entropy-variable chain-rule arguments.
	
	Based on the continuous formulations, we construct semi-discrete finite difference schemes which retain the skew-symmetric structure. The convective part is approximated by the central, non-dissipative component of linear upwind SBP operators, while artificial dissipation is built from the nonlinear upwind SBP--WENO operator of \cite{JiangWang2024UpwindSBP}. This separation is important: the nonlinear WENO mechanism is used to control spurious oscillations near discontinuities, but it is not inserted into the skew-symmetric balance itself. Under periodic boundary conditions, the SBP property then yields a semi-discrete energy estimate consistent with the continuous one.
	
	The main contributions of this work are summarized as follows. First, we derive transformed-variable skew-symmetric formulations for the reactive Euler equations and for the reactive Navier--Stokes equations. Second, we prove continuous energy estimates for these formulations, including the effect of the chemical reaction and the viscous, heat-conductive, and species-diffusive terms. Third, we design SBP finite difference schemes with SBP--WENO artificial dissipation and prove semi-discrete energy stability under periodic boundary conditions. Finally, we present one- and two-dimensional numerical experiments to verify the accuracy of the schemes and to demonstrate their performance for discontinuous and detonation-type problems.
	
	The rest of the paper is organized as follows. In Section~\ref{sec:preliminaries}, we introduce the reactive flow equations and recall the general theory of skew-symmetric formulations. Section~\ref{sec:ns} derives skew-symmetric forms for the reactive Euler and reactive Navier--Stokes equations. Section~\ref{sec:es} constructs the semi-discrete schemes and proves the energy estimates under periodic boundary conditions. Numerical experiments are given in Section~\ref{sec:ne}. Section~\ref{sec:conclusion} concludes the paper.

	\section{Preliminaries}
	\setcounter{equation}{0}
	\label{sec:preliminaries}
	
	This section introduces the reactive flow equations considered in this work and recalls the general framework of \cite{Nordstrom2022PrimalDual} for obtaining skew-symmetric formulations and energy estimates.
	
	\subsection{Reactive flow equations}
	
	We consider the two-dimensional reactive Navier--Stokes equations
	\begin{equation}
		\label{eq:RNS}
		\begin{aligned}
			\partial_t \rho + \nabla \cdot (\rho \mathbf{u}) &= 0,\\
			\partial_t (\rho \mathbf{u}) + \nabla \cdot (\rho \mathbf{u}\mathbf{u} + p\mathbf{I}) &= \nabla \cdot \boldsymbol{\tau},\\
			\partial_t (\rho E) + \nabla \cdot (\rho \mathbf{u} E + p\mathbf{u}) &= \nabla \cdot (\boldsymbol{\tau} \cdot \mathbf{u}) + \nabla \cdot (\kappa \nabla T) + \nabla \cdot (q\sigma \nabla Y),\\
			\partial_t (\rho Y) + \nabla \cdot (\rho \mathbf{u} Y) &= \nabla \cdot (\sigma \nabla Y) + \omega.
		\end{aligned}
	\end{equation}
	Here $\rho$ is the density, $\mathbf{u}=(u_1,u_2)^T$ is the velocity, $p$ is the pressure, $E$ is the total energy per unit mass, $\kappa>0$ is the thermal conductivity, $q>0$ is the heat release, $\sigma>0$ is the diffusion coefficient, $Y \in [0,1]$ is the reactant mass fraction, and $\omega \le 0$ is the reaction term. The stress tensor is
	$
	\boldsymbol{\tau}
	=
	\lambda \nabla \cdot \mathbf{u}\,\mathbf{I}
	+
	\mu\bigl(\nabla \mathbf{u}+(\nabla \mathbf{u})^{\dagger}\bigr),
	$
	where $\mathbf{I}$ is the identity matrix, and $\lambda$ and $\mu$ are the bulk viscosity and dynamic viscosity, respectively.
	The total energy $\rho E$ is given by
	\begin{equation} \label{eq:total_energy}
		\rho E = \frac{1}{2}\rho |\mathbf{u}|^2 + \rho e + q\rho Y
	\end{equation}
	with $\rho e$ the internal energy.
	The system is closed by the ideal-gas equation of state
	\begin{equation} \label{eq:eos}
		p = (\gamma - 1)\rho e,
	\end{equation}
	where $\gamma>1$ is the constant specific heat ratio. Neglecting the viscous stress, the heat conduction, and the species diffusion in \eqref{eq:RNS}, we obtain the reactive Euler equations
	\begin{equation} \label{eq:REuler}
		\begin{aligned}
			\partial_t \rho + \nabla \cdot (\rho \mathbf{u}) &= 0,\\
			\partial_t (\rho \mathbf{u}) + \nabla \cdot (\rho \mathbf{u}\mathbf{u} + p\mathbf{I}) &= 0,\\
			\partial_t (\rho E) + \nabla \cdot (\rho \mathbf{u} E + p\mathbf{u}) &= 0,\\
			\partial_t (\rho Y) + \nabla \cdot (\rho \mathbf{u} Y) &= \omega.
		\end{aligned}
	\end{equation}
	
	\subsection{Main theory}
	
	\subsubsection{Skew-symmetric form of hyperbolic equations}
	
	Next we introduce the stability theory developed in \cite{Nordstrom2022PrimalDual} for general nonlinear hyperbolic systems written in a skew-symmetric form. Consider the following general first-order hyperbolic system
	\begin{equation} \label{eq:general_system}
		P U_t + \sum_{i=1}^d (A_i(V) U)_{x_i} + \sum_{i=1}^d B_i(V) U_{x_i} + W(V) U = 0,
	\end{equation}
	where $t\geq 0$, $x=(x_1,x_2,...,x_d) \in \Omega$, $U,V \in \mathbb R^n$ are smooth vector variables,  $P \in \mathbb R^{n\times n}$ is a constant symmetric positive-definite matrix, and matrices $A_i=A_i(V)$, $B_i=B_i(V)$, and $W=W(V)$ are assumed to be smooth functions of $V$.
	
	Define the energy $\|U\|_P^2 = \int_{\Omega} U^{T} P U \,\mathrm{d}\Omega$. We slightly modify the notion of energy conservation in \cite{Nordstrom2022PrimalDual} as follows.
	
	
	\begin{prop} \label{prop:energy_est}
		If the matrices in \eqref{eq:general_system} satisfy
		\begin{equation}\label{eq:skew_cond}
			B_i = A_i^{T}, \qquad i = 1,\dots,d
		\end{equation}
		and
		\begin{equation}\label{eq:skew_cond2}
			W + W^T \geq 0,
		\end{equation}
		then system \eqref{eq:general_system} with periodic boundary conditions is energy stable in the sense that
		\begin{equation}\label{eq:energy_evol}
			\begin{aligned}
				\frac{1}{2}\frac{\mathrm{d}}{\mathrm{d}t} \|U\|_P^2
				&=
				-
				\int_{\Omega} U^{T} W U \,\mathrm{d}\Omega \leq 0.
			\end{aligned}
		\end{equation}
	\end{prop}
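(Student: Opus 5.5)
The plan is to multiply \eqref{eq:general_system} from the left by $U^T$ and integrate over $\Omega$. Then I would show that the flux terms cancel exactly, so that only the zeroth-order term $W$ contributes to the energy balance.

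\textbf{Step 1: the time derivative.} Since $P$ is constant and symmetric,
\[
\int_\Omega U^T P U_t \,\mathrm{d}\Omega = \frac{1}{2}\frac{\mathrm{d}}{\mathrm{d}t}\|U\|_P^2 .
\]

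\textbf{Step 2: the flux terms.} For each $i$, integrate the conservative term by parts:
\[
\int_\Omega U^T (A_i U)_{x_i}\,\mathrm{d}\Omega = \oint_{\partial\Omega} U^T A_i U\, n_i \,\mathrm{d}s - \int_\Omega U_{x_i}^T A_i U\,\mathrm{d}\Omega .
\]
The boundary integral vanishes for periodic boundary conditions, because the contributions from opposite faces cancel. The remaining volume term is a scalar, so it equals its transpose $\int_\Omega U^T A_i^T U_{x_i}\,\mathrm{d}\Omega$. Using \eqref{eq:skew_cond}, this is exactly $\int_\Omega U^T B_i U_{x_i}\,\mathrm{d}\Omega$. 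Hence
\[
\int_\Omega U^T\bigl[(A_iU)_{x_i} + B_i U_{x_i}\bigr]\,\mathrm{d}\Omega = 0 \quad\text{for every } i .
\]
This cancellation is the skew-symmetry. It holds regardless of the dependence of $A_i$ on $V$, because the derivative of $A_i$ is never separated out.

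\textbf{Step 3: the source term.} What remains is
\[
\frac{1}{2}\frac{\mathrm{d}}{\mathrm{d}t}\|U\|_P^2 = -\int_\Omega U^T W U\,\mathrm{d}\Omega .
\]
Write $U^T W U = \tfrac{1}{2} U^T (W+W^T) U$. By \eqref{eq:skew_cond2} this quantity is pointwise nonnegative, so the right-hand side is $\le 0$, which is \eqref{eq:energy_evol}.

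\textbf{Main obstacle.} There is no real obstacle. The only point needing care is Step 2: one must integrate the conservative form $(A_iU)_{x_i}$ by parts rather than expanding it with the product rule, and then use the transpose identity for the scalar integrand. Smoothness of $U$ and $V$ justifies the integration by parts, and periodicity removes the boundary terms.
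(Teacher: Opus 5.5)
Your proof is correct and follows the paper's proof step for step. You multiply by $U^T$, integrate the conservative term $(A_iU)_{x_i}$ by parts, use $B_i=A_i^T$ with the scalar transpose identity to cancel the volume terms, and drop the boundary integral by periodicity. You then conclude via $U^TWU=\tfrac12 U^T(W+W^T)U\ge 0$.
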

	
	\begin{proof}
		Note that $P$ is symmetric positive definite. Multiplying \eqref{eq:general_system} by \(U^T\) from the left and integrating over the domain \(\Omega\), we obtain
		\[
		\frac{1}{2}\frac{d}{dt}\|U\|_P^2
		+
		\int_{\Omega}
		U^T
		\sum_{i=1}^{d}(A_iU)_{x_i}
		\, d\Omega
		+
		\int_{\Omega}
		U^T
		\sum_{i=1}^{d}B_iU_{x_i}
		\, d\Omega
		+
		\int_{\Omega}
		U^T WU
		\, d\Omega
		=
		0 .
		\]
		Applying integration by parts for the above equation gives
		\[
		\frac{1}{2}\frac{d}{dt}\|U\|_P^2
		+
		\oint_{\partial\Omega}
		U^T
		\left(
		\sum_{i=1}^{d} n_i A_i
		\right)
		U\, ds
		=
		\int_{\Omega}
		\sum_{i=1}^{d}
		\left(
		U_{x_i}^T A_i U
		-
		U^T B_i U_{x_i}
		\right)
		\, d\Omega
		-
		\int_{\Omega}
		U^T WU\, d\Omega .
		\]
		Since \(B_i=A_i^T\), we have
		\[
		U_{x_i}^T A_i U
		=
		U^T B_i U_{x_i}.
		\]
		Moreover, under periodic boundary conditions, the boundary contributions cancel and thereby
		\[
		\frac{1}{2}\frac{d}{dt}\|U\|_P^2
		=
		-
		\int_{\Omega}
		U^T WU\, d\Omega.
		\]
		The right-hand side of the above equation is nonpositive since \(W+W^T\ge0\).
		Thus the system is energy stable.
	\end{proof}

	\begin{rem}
		System \eqref{eq:general_system} satisfying \eqref{eq:skew_cond} and \eqref{eq:skew_cond2} is said to be in a skew-symmetric form \cite{Nordstrom2022PrimalDual}. Note that the notion of energy conservation for system \eqref{eq:general_system} developed in \cite{Nordstrom2022PrimalDual} requires that $W + W^T=0$. Here we relax this condition to nonnegativity so that more equations, such as the reactive Euler equations \eqref{eq:REuler}, can be included in the theory.
	\end{rem}
	
	\begin{rem}
		The main advantage of skew-symmetric formulations is that the nonlinear convective operator is skew-symmetric by construction. Consequently, the energy stability follows directly from standard integration by parts, because the convective contributions cancel in the energy norm without the need for lengthy chain-rule manipulations. This continuous skew-symmetric structure is also essential for numerical approximation, as it enables a direct SBP discretization whose semi-discrete energy estimate mirrors the continuous one.
	\end{rem}
	
	\begin{rem}\label{rem:construction_procedure}
		Proposition~\ref{prop:energy_est} shows that, regardless of the original form of the system, energy stability follows once it admits a skew-symmetric reformulation. Following \cite{Nordstrom2022PrimalDual}, the reformulation of a given system into the form of \eqref{eq:general_system} consists of three steps:
		\begin{enumerate}
			\item Identify a suitable energy norm matrix $P$ and introduce a corresponding set of dependent variables.
			\item Derive the governing equations in these variables from the standard conservative form.
			\item Transform the new set of equations into a skew-symmetric form as in \eqref{eq:general_system} so that conditions \eqref{eq:skew_cond} and \eqref{eq:skew_cond2} hold.
		\end{enumerate}
	\end{rem}

	\subsubsection{Skew-symmetric form of hyperbolic--parabolic systems}
	\label{subsec:general_hp}
	
	Following the work in \cite{Nordstrom2024SkewNS} for the Navier--Stokes equations, we further extend the above framework to the general hyperbolic--parabolic system
	\begin{equation}\label{eq:general_hp_system}
		P U_t
		+\displaystyle\sum_{i=1}^{d}(A_i(V)U)_{x_i}
		+\displaystyle\sum_{i=1}^{d}B_i(V)U_{x_i}
		+W(V)U = F(V,\nabla V,\nabla^2 V).
	\end{equation}
	Here $F(V,\nabla V,\nabla^2 V)$ collects the parabolic contributions. Then we have
	
	\begin{prop}\label{prop:general_hp_energy}
		If the matrices in \eqref{eq:general_hp_system} satisfy \eqref{eq:skew_cond} and \eqref{eq:skew_cond2}, and
		$U^{T}F(V,\nabla V,\nabla^2V)$ can be written in a certain divergence form
		\begin{equation}\label{eq:parabolic_divergence}
			U^{T}F(V,\nabla V,\nabla^2V)
			=
			\sum_{i=1}^{d}
			\bigl(G_i(U,V,\nabla V)\bigr)_{x_i},
		\end{equation}
		then system \eqref{eq:general_hp_system} with periodic boundary conditions satisfies the energy stability \eqref{eq:energy_evol}.
	\end{prop}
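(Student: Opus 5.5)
The plan is to repeat the energy argument of Proposition~\ref{prop:energy_est} and handle the new parabolic term $F$ through the divergence structure \eqref{eq:parabolic_divergence}. Multiply \eqref{eq:general_hp_system} from the left by $U^{T}$ and integrate over $\Omega$. Since $P$ is constant and symmetric, the time term gives $\frac{1}{2}\frac{\mathrm{d}}{\mathrm{d}t}\|U\|_P^2$. This yields
\[
\frac{1}{2}\frac{\mathrm{d}}{\mathrm{d}t}\|U\|_P^2
+\int_{\Omega}U^{T}\sum_{i=1}^{d}\Bigl((A_iU)_{x_i}+B_iU_{x_i}\Bigr)\,\mathrm{d}\Omega
+\int_{\Omega}U^{T}WU\,\mathrm{d}\Omega
=\int_{\Omega}U^{T}F\,\mathrm{d}\Omega .
\]

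The left-hand side is treated exactly as in the proof of Proposition~\ref{prop:energy_est}. Integrating $U^{T}(A_iU)_{x_i}$ by parts gives $-U_{x_i}^{T}A_iU$ plus a boundary term. By \eqref{eq:skew_cond}, $U_{x_i}^{T}A_iU=U^{T}A_i^{T}U_{x_i}=U^{T}B_iU_{x_i}$, so the volume contributions cancel. Under periodic boundary conditions the boundary integral $\oint_{\partial\Omega}U^{T}(\sum_i n_iA_i)U\,\mathrm{d}s$ vanishes. This is because $U$ and $V$, hence $A_i(V)$, take equal values on opposite faces, where the outward normals are opposite.

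For the right-hand side, use \eqref{eq:parabolic_divergence} and the divergence theorem:
\[
\int_{\Omega}U^{T}F\,\mathrm{d}\Omega=\oint_{\partial\Omega}\sum_{i=1}^{d}n_iG_i(U,V,\nabla V)\,\mathrm{d}s .
\]
This also vanishes by periodicity. Each $G_i$ depends only on $U$, $V$ and $\nabla V$, and all of these are periodic for a smooth periodic solution, so contributions from opposite faces cancel. Combining the two sides gives $\frac{1}{2}\frac{\mathrm{d}}{\mathrm{d}t}\|U\|_P^2=-\int_{\Omega}U^{T}WU\,\mathrm{d}\Omega$. Since $U^{T}WU=\frac{1}{2}U^{T}(W+W^{T})U\ge 0$ by \eqref{eq:skew_cond2}, this is \eqref{eq:energy_evol}.

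The only point needing care is the periodicity of the flux $G_i$. Its periodicity must follow from that of $V$ and its derivatives, which holds because $G_i$ is a function of periodic quantities only, with no explicit dependence on $x$.

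Note that the hypothesis as stated asks only for exact divergence form. In the Navier--Stokes application one typically has $U^{T}F=\sum_i(G_i)_{x_i}-D$ with a dissipation term $D\ge0$. The argument above would then go through unchanged, with the extra term $-\int_\Omega D\,\mathrm{d}\Omega\le0$ appearing on the right-hand side, so the estimate remains stable.
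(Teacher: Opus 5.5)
Your proof is correct and is the direct argument the paper intends: you repeat the energy computation of Proposition~\ref{prop:energy_est}, and the extra term $\int_\Omega U^{T}F\,\mathrm{d}\Omega$ vanishes by the divergence form \eqref{eq:parabolic_divergence} together with periodicity. Your closing remark about a volume dissipation term $D\ge 0$ is a valid generalization, but in the paper's own application \eqref{eq:Phi_PLS} the viscous contribution is an exact divergence with no volume dissipation, as the authors stress after \eqref{eq:ns_energy_balance_continuous}.
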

	
	
	The proof of the above proposition is direct. To reformulate a system into the skew-symmetric form in \eqref{eq:general_hp_system}, one can follow Steps 1--3 for the hyperbolic system \eqref{eq:general_system} and further check the condition \eqref{eq:parabolic_divergence}.
	
	\section{Skew-symmetric formulations}
	\label{sec:ns}
	\setcounter{equation}{0}
	
	In this section, we apply the general framework of Section~\ref{sec:preliminaries} to derive skew-symmetric formulations for the reactive Euler and reactive Navier--Stokes equations.
	
	\subsection{Skew-symmetric formulation of the reactive Euler equations}
	
	We first follow the procedure summarized in Remark~\ref{rem:construction_procedure} to consider the two-dimensional (2D) reactive Euler equations \eqref{eq:REuler}.
	In contrast to the non-reactive case, the transformation of variables must incorporate the chemical potential energy.
	Motivated by the variables in \cite{Nordstrom2022SkewEuler} for the Euler equations and the structure of the total energy $\rho E = \frac{p}{\gamma-1} + \frac{1}{2}\rho |\mathbf{u}|^2 + q\rho Y$, we define
	\begin{equation} \label{eq:new_vars}
		\Phi =
		\begin{pmatrix}
			\phi_1 \\
			\phi_2 \\
			\phi_3 \\
			\phi_4 \\
			\phi_5
		\end{pmatrix}
		=
		\begin{pmatrix}
			\sqrt{\rho} \\
			\sqrt{\rho}u_1 \\
			\sqrt{\rho}u_2 \\
			\sqrt{p + q\rho Y (\gamma - 1)} \\
			\sqrt{\rho Y}
		\end{pmatrix},
		\quad
		P = \text{diag}(\alpha^2, \beta^2, \theta^2, \eta^2, 1),
	\end{equation}
	where $\alpha^2$, $\beta^2$, $\theta^2$, and $\eta^2$ are positive constants to be determined later. Note that if $Y=0$, $\phi_1, \phi_2, \phi_3, \phi_4$ are the variables introduced in \cite{Nordstrom2022SkewEuler} for the Euler equations without chemical reactions. This completes the first step in Remark~\ref{rem:construction_procedure}.
	
	Using the ideal gas law, the 2D reactive Euler equations in primitive variables take the form
	\begin{equation*}\label{eq:primitive_form}
		\begin{aligned}
			\rho_t + u_1\rho_{x_1} + u_2\rho_{x_2} + \rho(u_{1x_1} + u_{2x_2}) &= 0, \\
			u_{1t} + u_1u_{1x_1} + u_2u_{1x_2} + p_{x_1}/\rho &= 0, \\
			u_{2t} + u_1u_{2x_1} + u_2u_{2x_2} + p_{x_2}/\rho &= 0, \\
			[p + (\gamma - 1)q\rho Y]_t + [(p + (\gamma - 1)q\rho Y)u_1]_{x_1} & \\
			\quad + [(p + (\gamma - 1)q\rho Y)u_2]_{x_2} + (\gamma - 1)p(u_{1x_1} + u_{2x_2}) &= 0, \\
			(\rho Y)_t + u_1(\rho Y)_{x_1} + u_2(\rho Y)_{x_2} + \rho Y(u_{1x_1} + u_{2x_2}) &= \omega.
		\end{aligned}
	\end{equation*}
	With these, we apply the chain rule to obtain the equations for $\Phi$ defined in \eqref{eq:new_vars}:
	\begin{equation}\label{eq:phi_system}
		\begin{aligned}
			(\phi_1)_t
			+ \frac{1}{2}\Big[u_1(\phi_1)_{x_1} + (\phi_2)_{x_1}\Big]
			+ \frac{1}{2}\Big[u_2(\phi_1)_{x_2} + (\phi_3)_{x_2}\Big] &= 0, \\[1em]
			(\phi_2)_t
			+ \frac{1}{2}\biggl[
			- u_1^2(\phi_1)_{x_1}
			+ 3u_1(\phi_2)_{x_1}
			+ \frac{4\phi_4}{\phi_1}(\phi_4)_{x_1}
			- \frac{4q(\gamma - 1)\phi_5}{\phi_1}(\phi_5)_{x_1}
			\biggr] \\
			\qquad
			+ \frac{1}{2}\Big[
			- u_1u_2(\phi_1)_{x_2}
			+ 2u_2(\phi_2)_{x_2}
			+ u_1(\phi_3)_{x_2}
			\Big] &= 0, \\[1em]
			(\phi_3)_t
			+ \frac{1}{2}\Big[
			- u_1u_2(\phi_1)_{x_1}
			+ u_2(\phi_2)_{x_1}
			+ 2u_1(\phi_3)_{x_1}
			\Big] \\
			\qquad
			+ \frac{1}{2}\biggl[
			- u_2^2(\phi_1)_{x_2}
			+ 3u_2(\phi_3)_{x_2}
			+ \frac{4\phi_4}{\phi_1}(\phi_4)_{x_2}
			- \frac{4q(\gamma - 1)\phi_5}{\phi_1}(\phi_5)_{x_2}
			\biggr] &= 0, \\[1em]
			(\phi_4)_t
			- \sum_{i=1}^{2}\frac{\gamma u_i}{2}\frac{\phi_4}{\phi_1}(\phi_1)_{x_i}
			+ \sum_{i=1}^{2}\frac{q(\gamma-1)^2u_i}{2}
			\frac{\phi_5^2}{\phi_4\phi_1}(\phi_1)_{x_i} \\
			\qquad
			+ \sum_{i=1}^{2}\frac{\gamma}{2}\frac{\phi_4}{\phi_1}(\phi_{i+1})_{x_i}
			- \sum_{i=1}^{2}\frac{q(\gamma-1)^2}{2}
			\frac{\phi_5^2}{\phi_4\phi_1}(\phi_{i+1})_{x_i}
			+ \sum_{i=1}^{2}u_i(\phi_4)_{x_i} &= 0, \\[1em]
			(\phi_5)_t
			+ \sum_{i=1}^{2}u_i(\phi_5)_{x_i}
			+ \sum_{i=1}^{2}\frac{1}{2}\frac{\phi_5}{\phi_1}(\phi_{i+1})_{x_i}
			- \sum_{i=1}^{2}\frac{u_i}{2}\frac{\phi_5}{\phi_1}(\phi_1)_{x_i}
			- \frac{\omega}{2\phi_5} &= 0,
		\end{aligned}
	\end{equation}
	which can be written in matrix form
	\begin{equation} \label{eq:quasi_linear}
		\Phi_t + A\Phi_{x_1} + B\Phi_{x_2} + C\Phi = 0.
	\end{equation}
	Here, $A, B \in \mathbb{R}^{5 \times 5}$ denote the convective transport in the $x_1$- and $x_2$-directions, respectively, and $C \in \mathbb{R}^{5 \times 5}$ collects the source terms. Their explicit expressions are given by
	\begin{equation} \label{eq:matrix_ABC}
		\begin{split}
			A &= \frac{1}{2}
			\begin{pmatrix}
				u_1 & 1 & 0 & 0 & 0 \\
				-u_1^2 & 3u_1 & 0 & \frac{4\phi_4}{\phi_1} & -\frac{4q(\gamma-1)\phi_5}{\phi_1} \\
				-u_1 u_2 & u_2 & 2u_1 & 0 & 0 \\
				-\frac{\gamma u_1 p + (\gamma-1)u_1q\rho Y}{\phi_1 \phi_4} &
				\frac{\gamma p + (\gamma-1)q \rho Y}{\phi_1 \phi_4} & 0 & 2u_1 & 0 \\
				-\frac{u_1\phi_5}{\phi_1} & \frac{\phi_5}{\phi_1} & 0 & 0 & 2u_1
			\end{pmatrix}, \\
			B &= \frac{1}{2}
			\begin{pmatrix}
				u_2 & 0 & 1 & 0 & 0 \\
				-u_1 u_2 & 2u_2 & u_1 & 0 & 0 \\
				-u_2^2 & 0 & 3u_2 & \frac{4\phi_4}{\phi_1} & -\frac{4q(\gamma-1)\phi_5}{\phi_1} \\
				-\frac{\gamma u_2 p + (\gamma-1)u_2q\rho Y}{\phi_1 \phi_4} &
				0 & \frac{\gamma p + (\gamma-1)q \rho Y}{\phi_1 \phi_4} & 2u_2 & 0 \\
				-\frac{u_2\phi_5}{\phi_1} & 0 & \frac{\phi_5}{\phi_1} & 0 & 2u_2
			\end{pmatrix},\\
			C &= \text{diag}\left(0, 0, 0, 0, -\frac{\omega}{2\phi_5^2}\right).
		\end{split}
	\end{equation}
	Note that the species equation introduces nontrivial coupling in the last row and column of $A$ and $B$ through terms involving $\phi_5$ and $Y$; these contributions are absent in the non-reactive Euler equations. This completes the second step in Remark~\ref{rem:construction_procedure}.
	
	To further obtain the skew-symmetric form, we seek a decomposition of the convective operator into divergence and nonconservative parts. Specifically, we follow \cite{Nordstrom2022PrimalDual} to determine matrices $\tilde{A}$, $\tilde{B}$, and the norm matrix $P$ such that
	\begin{equation} \label{eq:ansatz}
		\begin{aligned}
			(\tilde{A}\Phi)_{x_1} + \tilde{A}^{T} \Phi_{x_1} &= 2 P A \Phi_{x_1}, \\
			(\tilde{B}\Phi)_{x_2} + \tilde{B}^{T} \Phi_{x_2} &= 2 P B \Phi_{x_2}.
		\end{aligned}
	\end{equation}
	This splitting isolates the skew-symmetric structure and identifies the boundary terms that remain after integration by parts. Then we compare the coefficients of the spatial derivatives on both sides of \eqref{eq:ansatz}, which yields an algebraic system for the unknown entries of $P$, $\tilde{A}$, and $\tilde{B}$.
	Solving this system gives the admissible norm
	\begin{equation} \label{eq:P_sol}
		P = \text{diag}\left(\alpha^2, \frac{\gamma-1}{2}, \frac{\gamma-1}{2}, 1, 1\right),
	\end{equation}
	where $\alpha^2>0$ remains a free parameter. The corresponding matrices $\tilde{A}$ and $\tilde{B}$ are
	\begin{equation} \label{eq:ABtilde}
		\setlength{\arraycolsep}{2.5pt}
		\begin{split}
			\tilde{A} &=
			\begin{pmatrix}
				\alpha^2 u_1 & 0 & 0 & 0 & 0 \\
				0 & \frac{(\gamma-1)u_1}{2} & 0 & 0 & 0 \\
				0 & 0 & \frac{(\gamma-1)u_1}{2} & 0 & 0 \\
				0 & 2(\gamma-1)\frac{\phi_4}{\phi_1} & 0 & \left[(2-\gamma) - \frac{q\rho Y (\gamma-1)^2}{p+q\rho Y(\gamma-1)}\right]u_1 & 0 \\
				0 & -2q(\gamma-1)^2 \frac{\phi_5}{\phi_1} & 0 & 2q u_1 (\gamma - 1)^2\frac{\phi_5}{\phi_4} & u_1
			\end{pmatrix}, \\[1em]
			\tilde{B} &=
			\begin{pmatrix}
				\alpha^2 u_2 & 0 & 0 & 0 & 0 \\
				0 & \frac{(\gamma-1)u_2}{2} & 0 & 0 & 0 \\
				0 & 0 & \frac{(\gamma-1)u_2}{2} & 0 & 0 \\
				0 & 0 & 2(\gamma-1)\frac{\phi_4}{\phi_1} & \left[(2-\gamma) - \frac{q\rho Y (\gamma-1)^2}{p+q\rho Y(\gamma-1)}\right]u_2 & 0 \\
				0 & 0 & -2q(\gamma-1)^2 \frac{\phi_5}{\phi_1} & 2q u_2 (\gamma - 1)^2\frac{\phi_5}{\phi_4} & u_2
			\end{pmatrix}.
		\end{split}
	\end{equation}
	
	With $\tilde{A}$, $\tilde{B}$, and $P$ determined, substitution of \eqref{eq:ansatz} into the quasi-linear form \eqref{eq:quasi_linear} yields the final skew-symmetric formulation of the reactive Euler equations:
	\begin{equation}\label{eq:final_form0}
		P \Phi_t + (\frac{1}{2}\tilde A \Phi)_{x_1} + \frac{1}{2}\tilde A^T \Phi_{x_1}
		+ (\frac{1}{2}\tilde B \Phi)_{x_2} + \frac{1}{2}\tilde B^T  \Phi_{x_2} + PC\Phi = 0,
	\end{equation}
	or equivalently,
	\begin{equation}\label{eq:final_form}
		\Phi_t + (A_1\Phi)_{x_1} + A_2\Phi_{x_1} + (B_1\Phi)_{x_2} + B_2\Phi_{x_2} + C\Phi = 0,
	\end{equation}
	where
	\begin{equation}\label{eq:A1B1}
		\begin{split}
			A_1 = \frac{P^{-1}\widetilde{A}}{2}
			=\frac{1}{2}
			\begin{pmatrix}
				u_1 & 0 & 0 & 0 & 0 \\
				0 & u_1 & 0 & 0 & 0 \\
				0 & 0 & u_1 & 0 & 0 \\
				0 & 2(\gamma-1)\frac{\phi_4}{\phi_1} & 0 & \left[(2-\gamma) - \frac{q\rho Y (\gamma-1)^2}{p+q\rho Y(\gamma-1)}\right]u_1 & 0 \\
				0 & -2q(\gamma-1)^2 \frac{\phi_5}{\phi_1} & 0 & 2q u_1 (\gamma - 1)^2\frac{\phi_5}{\phi_4} & u_1
			\end{pmatrix}, \\
			A_2 = \frac{P^{-1}\widetilde{A}^{T}}{2}
			=\frac{1}{2}
			\begin{pmatrix}
				u_1 & 0 & 0 & 0 & 0 \\
				0 & u_1 & 0 & 4\frac{\phi_4}{\phi_1} & -4q(\gamma-1)\frac{\phi_5}{\phi_1} \\
				0 & 0 & u_1 & 0 & 0 \\
				0 & 0 & 0 & \left[(2-\gamma) - \frac{q\rho Y (\gamma-1)^2}{p+q\rho Y(\gamma-1)}\right]u_1 & 2q u_1 (\gamma - 1)^2\frac{\phi_5}{\phi_4} \\
				0 & 0 & 0 & 0 & u_1
			\end{pmatrix}, \\
			B_1 = \frac{P^{-1}\widetilde{B}}{2}
			=\frac{1}{2}
			\begin{pmatrix}
				u_2 & 0 & 0 & 0 & 0 \\
				0 & u_2 & 0 & 0 & 0 \\
				0 & 0 & u_2 & 0 & 0 \\
				0 & 0 & 2(\gamma-1)\frac{\phi_4}{\phi_1} & \left[(2-\gamma) - \frac{q\rho Y (\gamma-1)^2}{p+q\rho Y(\gamma-1)}\right]u_2 & 0 \\
				0 & 0 & -2q(\gamma-1)^2 \frac{\phi_5}{\phi_1} & 2q u_2 (\gamma - 1)^2\frac{\phi_5}{\phi_4} & u_2
			\end{pmatrix}, \\
			B_2= \frac{P^{-1}\widetilde{B}^{T}}{2}
			=\frac{1}{2}
			\begin{pmatrix}
				u_2 & 0 & 0 & 0 & 0 \\
				0 & u_2 & 0 & 0 & 0 \\
				0 & 0 & u_2 & 4\frac{\phi_4}{\phi_1} & -4q(\gamma-1)\frac{\phi_5}{\phi_1} \\
				0 & 0 & 0 & \left[(2-\gamma) - \frac{q\rho Y (\gamma-1)^2}{p+q\rho Y(\gamma-1)}\right]u_2 & 2q u_2 (\gamma - 1)^2\frac{\phi_5}{\phi_4} \\
				0 & 0 & 0 & 0 & u_2
			\end{pmatrix}.
		\end{split}
	\end{equation}
	Note that the above matrices are independent of $\alpha$ and thus of the norm $P$.
	This completes the third step in Remark~\ref{rem:construction_procedure}.
	
	Having the above skew-symmetric formulation \eqref{eq:final_form0},  we define the quadratic energy
	\begin{equation} \label{eq:energy_norm}
		E(t) = \|\Phi\|_P^2 = \int_{\Omega} \Phi^T P \Phi \,\mathrm{d}\Omega = \int_\Omega
		\left[
		\alpha^2\rho
		+ (\gamma-1) \rho E
		+\rho Y
		\right]
		d\Omega
	\end{equation}
	and obtain the corresponding energy equation
	\begin{equation} \label{eq:energy_rate_final}
		\frac{d}{dt} \|\Phi\|_P^2
		+ \oint_{\partial \Omega} \Phi^{T} (n_{x_1}\tilde{A} + n_{x_2}\tilde{B} ) \Phi \,\mathrm{d}s
		=-\int_{\Omega} 2\Phi^T P C \Phi \,\mathrm{d}\Omega
		= \int_{\Omega} \omega \,\mathrm{d}\Omega \leq 0.
	\end{equation}
	where $(n_{x_1}, n_{x_2})$ denotes the outward unit normal vector on the boundary $\partial \Omega$.
	Hence the system is energy-stable under appropriate boundary conditions, \eg, periodic boundary conditions.
	
	\begin{rem}
		It is interesting to note that the energy $\Phi^T P \Phi = \alpha^2\rho + (\gamma-1) \rho E +\rho Y$ integrated in \eqref{eq:energy_norm}  is a linear combination of the conservative variables $\rho, \rho E$ and $\rho Y$. This seems natural as the energy should be conserved generally. This may motivate us to define the new energy function for general hyperbolic systems of conservation laws, \ie, choose $\Phi^T P \Phi$ as a linear combination of the corresponding conservative variables. In this way, the procedure to construct $\Phi$ and $P$ could be simplified.
	\end{rem}

	\subsection{Skew-symmetric formulation of the reactive Navier--Stokes equations}
	
	We further derive the skew-symmetric formulation for the 2D reactive Navier--Stokes equations. The same transformed variables $\phi_1, \phi_2, \ldots,\phi_5$ in \eqref{eq:new_vars} are employed.
	Following the procedure for the reactive Euler equations, we first give the reactive Navier--Stokes equations in primitive variables:
	
	\begin{equation}\label{eq:rns_primitive}
		\begin{aligned}
			\rho_t + u_1\rho_{x_1} + u_2\rho_{x_2} + \rho(u_{1x_1} + u_{2x_2}) = 0, \\[5pt]
			u_{1t} + u_1u_{1x_1} + u_2u_{1x_2} + \frac{p_{x_1}}{\rho}
			= \sum_{i=1}^{2}\frac{(\tau_{1i})_{x_i}}{\rho}
			&\coloneqq S_2, \\[5pt]
			u_{2t} + u_1u_{2x_1} + u_2u_{2x_2} + \frac{p_{x_2}}{\rho}
			= \sum_{i=1}^{2}\frac{(\tau_{2i})_{x_i}}{\rho}
			&\coloneqq S_3, \\[5pt]
			\bigl[p + (\gamma - 1)q\rho Y\bigr]_t
			+ \bigl[(p + (\gamma - 1)q\rho Y)u_1\bigr]_{x_1}\\
			+ \bigl[(p + (\gamma - 1)q\rho Y)u_2\bigr]_{x_2}
			+ (\gamma - 1)p(u_{1x_1} + u_{2x_2})\\
			= (\gamma -1)\sum_{i=1}^{2}\Bigl[
			(\kappa T_{x_i})_{x_i}
			+\sum_{j=1}^{2}\tau_{ij}(u_j)_{x_i}
			+ q\sigma (Y_{x_i})_{x_i}
			\Bigr]
			&\coloneqq S_4, \\[5pt]
			(\rho Y)_t + u_1(\rho Y)_{x_1} + u_2(\rho Y)_{x_2}
			+ \rho Y(u_{1x_1} + u_{2x_2}) - \omega
			= \sum_{i=1}^{2}(\sigma Y_{x_i})_{x_i}
			&\coloneqq S_5.
		\end{aligned}
	\end{equation}
	Then we collect the viscous and diffusive contributions in the above equations into the source vector
	$S = (0,S_2,S_3,S_4,S_5)^T$. From these equations we can obtain the equation for $\Phi$:
	\begin{equation}\label{eq:rns_compact}
		\begin{split}
			\Phi_t + A\Phi_{x_1} + B\Phi_{x_2} + C\Phi &= \Lambda S,
		\end{split}
	\end{equation}
	where $A,B,C$ are given in \eqref{eq:matrix_ABC} and
	$\Lambda = \text{diag}\left(\frac{1}{2\phi_1}, \phi_1, \phi_1, \frac{1}{2\phi_4}, \frac{1}{2\phi_5} \right)$.
	%
	According to the skew-symmetric forms of the reactive Euler equations \eqref{eq:final_form0} and \eqref{eq:final_form}, the above equation can be written as 	
	\begin{equation}\label{eq:final_form_ns0}
		P \Phi_t + (\frac{1}{2}\tilde A \Phi)_{x_1} + \frac{1}{2}\tilde A^T \Phi_{x_1}
		+ (\frac{1}{2}\tilde B \Phi)_{x_2} + \frac{1}{2}\tilde B^T  \Phi_{x_2} + PC\Phi = P \Lambda S,
	\end{equation}
	or equivalently,
	\begin{equation}\label{eq:final_form_ns}
		\Phi_t + (A_1\Phi)_{x_1} + A_2\Phi_{x_1} + (B_1\Phi)_{x_2} + B_2\Phi_{x_2} + C\Phi = \Lambda S,
	\end{equation}
	where $\tilde A$, $\tilde B$, and $A_1, A_2, B_1, B_2$ are given in \eqref{eq:ABtilde} and \eqref{eq:A1B1}, respectively.
	
	To show that \eqref{eq:final_form_ns0} satisfies the condition \eqref{eq:parabolic_divergence}, we compute
	\begin{equation}\label{eq:Phi_PLS}
		\begin{aligned}
			\Phi^{T}(P\Lambda S)
			=&\frac{1}{2}(\gamma-1)\Bigg(
			\sum_{i,j=1}^{2}u_i(\tau_{ij})_{x_j}
			+\sum_{i=1}^{2}(\kappa T_{x_i})_{x_i}
			+\sum_{i,j=1}^{2}\tau_{ij}(u_i)_{x_j}
			+\sum_{i=1}^{2}q\sigma(Y_{x_i})_{x_i}
			\Bigg) \\
			&+\frac{1}{2}\sum_{i=1}^{2}(\sigma Y_{x_i})_{x_i}\\
			=& \frac{1}{2} (\gamma-1)\sum_{j=1}^{2}
			\frac{\partial}{\partial x_j}
			\left(
			\sum_{i=1}^{2}u_i\tau_{ij}
			+\kappa T_{x_j}
			+(q+\frac{1}{\gamma-1})\sigma Y_{x_j}
			\right).
		\end{aligned}
	\end{equation}
	Note that the above result is in a divergence form, leading to the satisfaction of \eqref{eq:parabolic_divergence}.
	Thus \eqref{eq:final_form_ns0} is the skew-symmetric formulation of the 2D reactive Navier--Stokes equations.
	
	Multiplying \eqref{eq:final_form_ns0} by $\Phi^T$ from the left and integrating over $\Omega$, we obtain the energy equation
	\begin{equation}\label{eq:ns_energy_balance_continuous}
		\begin{aligned}
			&\frac{d}{dt}\|\Phi\|_P^2
			+ \oint_{\partial\Omega}
			\Phi^{T}(n_{x_1}\tilde{A}+n_{x_2}\tilde{B})\Phi \,\mathrm{d}s \\
			& \hspace{5mm} -(\gamma-1)\oint_{\partial\Omega}
			\Bigg[
			\sum_{i,j=1}^{2}u_i\tau_{ij}n_{x_j}
			+\kappa\sum_{j=1}^{2}T_{x_j}n_{x_j}
			+\left(q+\frac{1}{\gamma-1}\right)
			\sigma\sum_{j=1}^{2}Y_{x_j}n_{x_j}
			\Bigg] \,\mathrm{d}s \\
			&=
			-\int_{\Omega}2\Phi^{T}PC\Phi \,\mathrm{d}\Omega
			=\int_{\Omega} \omega \,\mathrm{d}\Omega \leq 0.
		\end{aligned}
	\end{equation}
	Under periodic boundary conditions, the system is energy stable.
	Here we emphasize that, under the present transformed norm, the viscous and diffusive terms contribute only through boundary fluxes and do not yield a negative-definite volume dissipation term.
	
	\section{Energy stable schemes for reactive Euler equations}
	\label{sec:es}
	\setcounter{equation}{0}
	In this section, we construct spatial discretizations for the skew-symmetric formulation of the reactive Euler equations by adding dissipation terms to the framework developed in \cite{Nordstrom2022PrimalDual}. The extension to the reactive Navier--Stokes equations is presented in Section~\ref{sec:ns_discrete}.
	Specifically, the first-order derivatives are approximated using SBP operators as in \cite{Nordstrom2022PrimalDual}. Based on the upwind SBP-WENO operator in \cite{JiangWang2024UpwindSBP}, we propose an efficient artificial dissipation operator to suppress spurious oscillations near discontinuities. In this way, we obtain a semi-discrete scheme that is not only energy stable but also
	robust for discontinuous problems. 
	
	\subsection{General framework}
	\label{subsec:euler_discrete}
	
	We first give the general framework and then provide the specific operators in the next subsection.
	
	\subsubsection{Semi-discrete scheme}
	
	We consider a 2D rectangular domain $\Omega$ and discretize it using a uniform Cartesian mesh with
	\(N_{x}\times N_{y}\) grid points. The corresponding mesh sizes along $x$ and $y$ directions are denoted as $\delta_x$ and $\delta_y$, respectively.
	%
	%
	Let $\Phi_{ij} \in \mathbb{R}^{5} $ be the numerical solution of $\Phi$ at point $(x_i,y_j)$. Then the general semi-discrete finite difference scheme for the skew-symmetric form \eqref{eq:final_form} can be written as
	\begin{equation}\label{eq:euler_pointwise}
		\begin{aligned}
			\frac{d}{dt}\Phi_{ij}
			&+
			\bigl(D_x(A_1\Phi)\bigr)_{ij}
			+
			A_2(x_i,y_j)\bigl(D_x\Phi\bigr)_{ij}
			\\
			&+
			\bigl(D_y(B_1\Phi)\bigr)_{ij}
			+
			B_2(x_i,y_j)\bigl(D_y\Phi\bigr)_{ij}+
			C(x_i,y_j)\Phi_{ij}
			\\
			&=
			-\bigl(D_{\mathrm{diss},x}\Phi\bigr)_{ij}
			-\bigl(D_{\mathrm{diss},y}\Phi\bigr)_{ij}.
		\end{aligned}
	\end{equation}
	Here $D_x$ and $D_y$ are linear finite difference operators such that
	$(D_x(A_1\Phi))_{ij}$ and $(D_y(B_1\Phi))_{ij}$ approximate the first-order derivatives
	$\partial_x (A_1\Phi)$ and $\partial_y (B_1\Phi)$ at $(x_i,y_j)$, respectively. The terms $\bigl(D_{\mathrm{diss},x}\Phi\bigr)_{ij}$ and $\bigl(D_{\mathrm{diss},y}\Phi\bigr)_{ij}$ are artificial dissipations, which serve only to stabilize the numerical scheme near discontinuities and become high-order small terms for smooth solutions.
	
	For each scalar component \(\phi_k\), we collect its numerical solutions $\phi_{k,ij}$ at all grid points $(x_i,y_j), i=1,2,...,N_x, j=1,2,...,N_y$ into a vector as
	\[
	\vec{\Phi}_k
	=
	\begin{pmatrix}
		\phi_{k,11}\\
		\phi_{k,21}\\
		\vdots\\
		\phi_{k,N_xN_y}
	\end{pmatrix}
	\in
	\mathbb{R}^{N_{x}N_{y}}.
	\]
	Define the full state vector
	\(
	\vec{\Phi}
	=
	\bigl(
	\vec{\Phi}_1^T,
	\vec{\Phi}_2^T,
	\ldots,
	\vec{\Phi}_5^T
	\bigr)^T
	\in
	\mathbb{R}^{5N_{x}N_{y}}.
	\)
	Then under periodic boundary conditions, the pointwise scheme \eqref{eq:euler_pointwise}
	can be assembled into the following form:
	\begin{equation}
		\label{eq:euler_semidiscrete}
		\vec{\Phi}_t
		+
		\mathbf{D}_{x}(\mathbf{A}_1\vec{\Phi})
		+
		\mathbf{A}_2\mathbf{D}_{x}\vec{\Phi}
		+
		\mathbf{D}_{y}(\mathbf{B}_1\vec{\Phi})
		+
		\mathbf{B}_2\mathbf{D}_{y}\vec{\Phi}
		+
		\mathbf{C}\vec{\Phi}
		=
		-\mathbf{D}_{\mathrm{diss}}\vec{\Phi},
	\end{equation}
	where \(\mathbf{A}_1,\mathbf{A}_2,\mathbf{B}_1,\mathbf{B}_2, \mathbf{C}\) are $5N_{x}N_{y} \times 5N_{x}N_{y}$ matrices defined in a similar way.
	For example, denote $A_1=(a_{ij})$, then
	\begin{equation}\label{eq:A1}
		\mathbf{A}_1
		=
		\begin{pmatrix}
			\mathbf a_{11} & \cdots & \mathbf a_{15}\\
			\vdots & \ddots & \vdots\\
			\mathbf a_{51} & \cdots & \mathbf a_{55}
		\end{pmatrix}		\in
		\mathbb{R}^{5N_{x}N_{y}
			\times
			5N_{x}N_{y}},
	\end{equation}
	where
	\(
	\mathbf a_{ij}
	=
	\operatorname{diag}
	\bigl(
	a_{ij}(x_{1},y_{1}),\ldots,
	a_{ij}(x_{N_x},y_{N_y})
	\bigr) \in
	\mathbb{R}^{N_xN_y\times N_xN_y}.
	\)
	%
	The two-dimensional derivative operators $\mathbf{D}_x$, $\mathbf{D}_y$
	and the artificial dissipation operator $\mathbf{D}_{\mathrm{diss}}$ are assembled from their one-dimensional
	counterparts via Kronecker product, the definition of which is given below.
	
	\begin{defn}[Kronecker product]\label{def:kron}
		Let $M \in \mathbb{R}^{m \times n}$ and $N \in \mathbb{R}^{p \times q}$.
		The Kronecker product $M \otimes N$ is the block matrix
		\[
		M \otimes N
		=
		\begin{pmatrix}
			m_{11} N & \cdots & m_{1n} N \\
			\vdots   & \ddots & \vdots   \\
			m_{m1} N & \cdots & m_{mn} N
		\end{pmatrix}
		\in \mathbb{R}^{mp \times nq}.
		\]
	\end{defn}
	
	With the above definition, the two-dimensional operators are
	\begin{equation}
		\label{eq:2D_operator}
		\mathbf{D}_{x}
		=
		I_5 \otimes D_{x} \otimes I_{N_y},
		\qquad
		\mathbf{D}_{y}
		=
		I_5 \otimes I_{N_x} \otimes D_{y},
	\end{equation}
	and
	\begin{equation}\label{eq:2d_diss}
		\mathbf{D}_{\mathrm{diss}}
		=
		\mathbf{D}_{\mathrm{diss},x} + \mathbf{D}_{\mathrm{diss},y}
	\end{equation}
	with
	\[
	\mathbf{D}_{\mathrm{diss},x}
	=
	I_5 \otimes D_{\mathrm{diss},x} \otimes I_{N_y},
	\qquad
	\mathbf{D}_{\mathrm{diss},y}
	=
	I_5 \otimes I_{N_x} \otimes D_{\mathrm{diss},y}.
	\]
	Here $I_{s}$ denotes the \(s\times s\) identity matrix.
	
	On the other hand, similar to the definition of $\mathbf{A}_1$ in \eqref{eq:A1}, we define $\tilde{\mathbf{A}}$  and $\mathbf{P}$ with each entry being a diagonal matrix determined by $\tilde A$ in \eqref{eq:ABtilde} and $P$ in \eqref{eq:P_sol}, respectively. Then the coefficient matrices in the scheme \eqref{eq:euler_semidiscrete} obey the following relation corresponding to \eqref{eq:A1B1}:
	\begin{equation}
		\label{eq:euler_compatibility}
		\mathbf{A}_1=\mathbf{P}^{-1}\tilde{\mathbf{A}}/2,\quad
		\mathbf{A}_2=\mathbf{P}^{-1}\tilde{\mathbf{A}}^{T}/2,\quad
		\mathbf{B}_1=\mathbf{P}^{-1}\tilde{\mathbf{B}}/2,\quad
		\mathbf{B}_2=\mathbf{P}^{-1}\tilde{\mathbf{B}}^{T}/2.
	\end{equation}
	Note that since $P$ is a constant diagonal matrix, $\mathbf{P}$ is also diagonal and can be written via the Kronecker product as
	\begin{equation}\label{eq:P_mathbf}
		\mathbf{P}=P\otimes I_{N_x}\otimes I_{N_y}.
	\end{equation}

	\subsubsection{Energy stability based on SBP operators}
	
	To preserve the energy stability of the continuous reactive Euler equations, we introduce summation-by-parts (SBP) operators as in \cite{Nordstrom2022PrimalDual}. These operators provide a discrete analogue of integration by parts and guarantee the discrete energy stability of the scheme \eqref{eq:euler_semidiscrete} under periodic boundary conditions.
	We begin by recalling the definition of one-dimensional SBP operators \cite{SvardNordstrom2014SBPReview,FernandezHickenZingg2014SBPSATReview,mattsson2017diagonal}.
	
	
	\begin{defn}[Diagonal-norm SBP operator]
		\label{def:sbp}
		Let $D$ be a consistent first-order derivative operator defined on the domain $[x_{\min}, x_{\max}]$ with grid points $x_1, x_2, \dots, x_N$ ($x_1=x_{\min}, x_N = x_{\max}$). Then $D$ is called a diagonal-norm SBP operator if it admits a decomposition $D = H^{-1} Q$, where $H=\diag(h_{11}, h_{22}, ..., h_{NN})$ is a diagonal, symmetric positive definite matrix (the norm matrix) satisfying $\sum_{i} h_{ii} = x_{\max} - x_{\min}$ and $Q$ satisfies the SBP constraint
		\begin{equation}\label{eq:sbp_def}
			Q + Q^T = B := \diag(-1,0,\ldots,0,1) \quad \mbox{for non-periodic problems},
		\end{equation}
		or
		\begin{equation}\label{eq:sbp_def0}
			Q + Q^T = B := 0 \quad \mbox{for periodic problems}.
		\end{equation}
		
	\end{defn}
	
	If the 1D operators $D_x$ and $D_y$ in \eqref{eq:2D_operator} are SBP operators, then the 2D operators $\mathbf{D}_{x}$ and $\mathbf{D}_{y}$ in \eqref{eq:2D_operator} satisfy the discrete integration-by-parts identities. To see this, we define
	\begin{equation}\label{eq:HB}
		\mathbf{H}
		=
		I_5\otimes H_{x}\otimes H_{y}, \quad
		\mathbf{B}_{x}
		=
		I_5\otimes B_{x}\otimes H_{y},
		\quad
		\mathbf{B}_{y}
		=
		I_5\otimes H_{x}\otimes B_{y},
	\end{equation}
	where $H_{x}, B_x$ and $H_y, B_y$ are the corresponding matrices ($H$ and $B$) in the definition of SBP operators $D_x$ and $D_y$, respectively. For arbitrary vectors $\vec U, \vec V \in \mathbb{R}^{5N_{x}N_{y}}$, a direct computation gives
	\begin{equation}
		\label{eq:sbp_vector_x1}
		\begin{aligned}
			\vec U^{T} \mathbf{H} \mathbf{D}_{x}\vec V
			&=
			\vec U^{T}
			(I_5\otimes H_x\otimes H_y)
			(I_5\otimes D_x\otimes I_{N_y})
			\vec V \\
			&=
			\vec U^{T}
			(I_5\otimes H_xD_x\otimes H_y)
			\vec V \\
			&=
			\vec U^{T}
			(I_5\otimes(-D_x^TH_x+B_x)\otimes H_y)
			\vec V \\
			&=
			-\vec U^{T}
			(I_5\otimes D_x^TH_x\otimes H_y)
			\vec V
			+
			\vec U^{T}
			(I_5\otimes B_x\otimes H_y)
			\vec V \\
			&=
			-(\mathbf{D}_{x}\vec U)^{T} \mathbf{H} \vec V
			+
			\vec U^{T}\mathbf{B}_{x}\vec V ,
		\end{aligned}
	\end{equation}
	where the SBP property $H_xD_x = -D_x^TH_x+B_x$ of the 1D operator $D_x$ has been used for the third equality.
	Similarly, we can also obtain
	\begin{equation}
		\label{eq:sbp_vector_x2}
		\vec U^{T} \mathbf{H} \mathbf{D}_{y}\vec V
		=
		-(\mathbf{D}_{y}\vec U)^{T} \mathbf{H} \vec V
		+
		\vec U^{T}\mathbf{B}_{y}\vec V .
	\end{equation}
	Equations \eqref{eq:sbp_vector_x1} and \eqref{eq:sbp_vector_x2} are the discrete analogues of continuous integration-by-parts identities and are crucial for the energy stability of the numerical schemes.
	
	
	
	Introduce the norm
	\[
	\|\vec U\|_{\mathbf P \mathbf H}^2
	=
	\vec U^T \mathbf P \mathbf H \vec U,
	\]
	where $\mathbf P$ and $\mathbf H$ are constant diagonal positive-definite matrices defined in \eqref{eq:P_mathbf} and \eqref{eq:HB}, respectively.
	Then we have
	\begin{thm}\label{thm:euler_energy_stability}
		For 2D periodic problems, suppose that the first-order derivative operators \(\mathbf{D}_{x}\) and \(\mathbf{D}_{y}\) in the scheme \eqref{eq:euler_semidiscrete}
		satisfy the SBP
		identities \eqref{eq:sbp_vector_x1} and \eqref{eq:sbp_vector_x2}, respectively,
		and that the artificial dissipation operator \(\mathbf{D}_{\mathrm{diss}}\)
		satisfies
		\begin{equation}\label{eq:Ddiss_cond}
			\vec{U}^T{\mathbf{H}}\mathbf{D}_{\mathrm{diss}}\vec{U}\ge 0, \quad \forall \vec{U}.
		\end{equation}
		Then the semi-discrete scheme \eqref{eq:euler_semidiscrete} satisfies the energy stability
		\begin{equation}
			\label{eq:euler_energy_diss_general}
			\frac{\mathrm{d}}{\mathrm{d}t}
			\|\vec{\Phi}\|_{\mathbf{P}{\mathbf{H}}}^2
			\le 0.
		\end{equation}
	\end{thm}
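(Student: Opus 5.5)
We mimic the continuous argument of Proposition~\ref{prop:energy_est} at the discrete level. We multiply \eqref{eq:euler_semidiscrete} from the left by $\vec\Phi^T\mathbf P\mathbf H$. Since $\mathbf P$ and $\mathbf H$ are constant diagonal and positive definite, they commute (both are Kronecker products whose first factor is diagonal or the identity). The time-derivative term therefore becomes $\frac12\frac{d}{dt}\|\vec\Phi\|^2_{\mathbf P\mathbf H}$. Using \eqref{eq:euler_compatibility}, the convective terms in the $x$-direction become
\[
\tfrac12\vec\Phi^T\mathbf H\mathbf D_x(\tilde{\mathbf A}\vec\Phi)+\tfrac12\vec\Phi^T\mathbf H\tilde{\mathbf A}^T\mathbf D_x\vec\Phi .
\]
Here we use that $\mathbf P\mathbf D_x=\mathbf D_x\mathbf P$, because $\mathbf P=P\otimes I\otimes I$ and $\mathbf D_x=I_5\otimes D_x\otimes I$. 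Both $\mathbf P$ and $\mathbf H$ are diagonal, hence symmetric, so $\mathbf P\mathbf P^{-1}=I$ can be absorbed in the first term. The $y$-direction terms are handled in the same way.

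The key step is the cancellation. We apply \eqref{eq:sbp_vector_x1} with $\vec U=\vec\Phi$ and $\vec V=\tilde{\mathbf A}\vec\Phi$. This gives
\[
\vec\Phi^T\mathbf H\mathbf D_x(\tilde{\mathbf A}\vec\Phi)=-(\mathbf D_x\vec\Phi)^T\mathbf H\tilde{\mathbf A}\vec\Phi+\vec\Phi^T\mathbf B_x\tilde{\mathbf A}\vec\Phi ,
\]
and under periodicity $\mathbf B_x=0$ by \eqref{eq:sbp_def0}. The second term equals
\[
\vec\Phi^T\mathbf H\tilde{\mathbf A}^T\mathbf D_x\vec\Phi=(\mathbf D_x\vec\Phi)^T\tilde{\mathbf A}\mathbf H\vec\Phi .
\]
The point to check is that $\mathbf H$ commutes with $\tilde{\mathbf A}$. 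This holds because every block of $\tilde{\mathbf A}$ is a diagonal grid-function matrix, while $\mathbf H=I_5\otimes H_x\otimes H_y$ acts identically and diagonally on each block. Hence $\mathbf H\tilde{\mathbf A}=\tilde{\mathbf A}\mathbf H$, and the two terms cancel exactly. The same argument with \eqref{eq:sbp_vector_x2} removes the $\tilde{\mathbf B}$ terms. This commutation, that is, the compatibility of the pointwise (diagonal) coefficient matrices with the diagonal norm, is the only delicate point. It is exactly where the diagonal-norm assumption on $H$ is essential.

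After the cancellation we are left with
\[
\tfrac12\tfrac{d}{dt}\|\vec\Phi\|^2_{\mathbf P\mathbf H}=-\vec\Phi^T\mathbf P\mathbf H\mathbf C\vec\Phi-\vec\Phi^T\mathbf P\mathbf H\mathbf D_{\rm diss}\vec\Phi .
\]
For the reaction term, $P$ has last entry $1$ and $C$ is diagonal with last entry $-\omega/(2\phi_5^2)$. Pointwise this gives $-\Phi^TPC\Phi=\omega/2\le0$, so
\[
-\vec\Phi^T\mathbf P\mathbf H\mathbf C\vec\Phi=\tfrac12\sum_{ij}h_{x,i}h_{y,j}\,\omega_{ij}\le 0 .
\]
This is the discrete analogue of \eqref{eq:energy_rate_final}.

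For the dissipation term we need $\vec\Phi^T\mathbf P\mathbf H\mathbf D_{\rm diss}\vec\Phi\ge0$, whereas the hypothesis \eqref{eq:Ddiss_cond} is stated without $\mathbf P$. To bridge this, we use that $\mathbf D_{\rm diss}$ acts componentwise (its first Kronecker factor is $I_5$) while $\mathbf P=P\otimes I\otimes I$ with $P$ diagonal and positive. Writing $\vec U=\mathbf P^{1/2}\vec\Phi$ and commuting $\mathbf P^{1/2}$ through both $\mathbf H$ and $\mathbf D_{\rm diss}$ gives
\[
\vec\Phi^T\mathbf P\mathbf H\mathbf D_{\rm diss}\vec\Phi=\vec U^T\mathbf H\mathbf D_{\rm diss}\vec U\ge0 .
\]
If the nonlinear SBP--WENO dissipation is regarded as $\vec\Phi$-dependent, one evaluates it at the frozen state, or equivalently assumes it is applied per component with the same structure; this is the only mild subtlety. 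Combining the three contributions yields \eqref{eq:euler_energy_diss_general}.
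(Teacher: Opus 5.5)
Your proposal is correct and follows the paper's proof essentially step by step: multiply the scheme by $\vec\Phi^T\mathbf P\mathbf H$, use the compatibility relations to pass to $\tilde{\mathbf A},\tilde{\mathbf B}$, cancel the convective terms with the SBP identities and $\mathbf B_x=\mathbf B_y=0$, bound the $\mathbf C$ term by its sign, and treat $\mathbf D_{\mathrm{diss}}$ by commuting $\mathbf P^{1/2}$ through $\mathbf H$ and $\mathbf D_{\mathrm{diss}}$. Two points are refinements rather than a different route: you explicitly check $\mathbf H\tilde{\mathbf A}=\tilde{\mathbf A}\mathbf H$, which the cancellation needs and the paper uses without comment, and you evaluate $-\vec\Phi^T\mathbf P\mathbf H\mathbf C\vec\Phi=\tfrac12\sum_{ij}h_{x,i}h_{y,j}\,\omega_{ij}\le 0$ directly instead of only citing positive semidefiniteness of $\mathbf C$.
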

	\begin{proof}
		Multiplying \eqref{eq:euler_semidiscrete} from the left by
		\(2\vec{\Phi}^T\mathbf{P}{\mathbf{H}}\) and using
		\(2\vec{\Phi}^T(\mathbf{P}{\mathbf{H}})\vec{\Phi}_t
		= \frac{\mathrm{d}}{\mathrm{d}t}\|\vec{\Phi}\|_{\mathbf{P}{\mathbf{H}}}^2\),
		we obtain
		\[
		\begin{aligned}
			\frac{\mathrm{d}}{\mathrm{d}t}\|\vec{\Phi}\|_{\mathbf{P}{\mathbf{H}}}^2
			&+
			2\vec{\Phi}^{T}{\mathbf{H}}\mathbf{D}_{x}
			\bigl(\mathbf{P}\mathbf{A}_1\vec{\Phi}\bigr)
			+
			2\vec{\Phi}^{T}{\mathbf{H}}
			(\mathbf{P}\mathbf{A}_2)\mathbf{D}_{x}\vec{\Phi}
			\\
			&+
			2\vec{\Phi}^{T}{\mathbf{H}}\mathbf{D}_{y}
			\bigl(\mathbf{P}\mathbf{B}_1\vec{\Phi}\bigr)
			+
			2\vec{\Phi}^{T}{\mathbf{H}}
			(\mathbf{P}\mathbf{B}_2)\mathbf{D}_{y}\vec{\Phi}
			\\
			&=
			-2\vec{\Phi}^{T}\mathbf{P}{\mathbf{H}}
			\bigl(\mathbf{C}+\mathbf{D}_{\mathrm{diss}}\bigr)\vec{\Phi}.
		\end{aligned}
		\]
		From relations \eqref{eq:euler_compatibility}, we have
		\(2\mathbf{P}\mathbf{A}_1 = \tilde{\mathbf{A}}\),
		\(2\mathbf{P}\mathbf{A}_2 = \tilde{\mathbf{A}}^{T}\),
		\(2\mathbf{P}\mathbf{B}_1 = \tilde{\mathbf{B}}\),
		\(2\mathbf{P}\mathbf{B}_2 = \tilde{\mathbf{B}}^{T}\).
		Substituting these into the above equation yields
		\begin{equation}\label{eq:thm41_1}
			\begin{aligned}
				\frac{\mathrm{d}}{\mathrm{d}t}\|\vec{\Phi}\|_{\mathbf{P}{\mathbf{H}}}^2
				&+
				\vec{\Phi}^{T}{\mathbf{H}}\mathbf{D}_{x}
				\bigl(\tilde{\mathbf{A}}\vec{\Phi}\bigr)
				+
				\vec{\Phi}^{T}{\mathbf{H}}\tilde{\mathbf{A}}^{T}
				\mathbf{D}_{x}\vec{\Phi}
				\\
				&+
				\vec{\Phi}^{T}{\mathbf{H}}\mathbf{D}_{y}
				\bigl(\tilde{\mathbf{B}}\vec{\Phi}\bigr)
				+
				\vec{\Phi}^{T}{\mathbf{H}}\tilde{\mathbf{B}}^{T}
				\mathbf{D}_{y}\vec{\Phi}
				\\
				&=
				-2\vec{\Phi}^{T}\mathbf{P}{\mathbf{H}}
				\bigl(\mathbf{C}+\mathbf{D}_{\mathrm{diss}}\bigr)\vec{\Phi}.
			\end{aligned}
		\end{equation}
		We now apply the 2D SBP identities
		\eqref{eq:sbp_vector_x1}--\eqref{eq:sbp_vector_x2}.
		Taking \(\vec{U}=\vec{\Phi}\) and \(\vec{V}=\tilde{\mathbf{A}}\vec{\Phi}\)
		in \eqref{eq:sbp_vector_x1} gives
		\[
		\vec{\Phi}^{T}{\mathbf{H}}\mathbf{D}_{x}
		\bigl(\tilde{\mathbf{A}}\vec{\Phi}\bigr)
		+
		\vec{\Phi}^{T}{\mathbf{H}}\tilde{\mathbf{A}}^{T}
		\mathbf{D}_{x}\vec{\Phi}
		=
		\vec{\Phi}^{T}\mathbf{B}_{x}\tilde{\mathbf{A}}\vec{\Phi}.
		\]
		Similarly, taking \(\vec{V}=\tilde{\mathbf{B}}\vec{\Phi}\)
		in \eqref{eq:sbp_vector_x2} yields
		\[
		\vec{\Phi}^{T}{\mathbf{H}}\mathbf{D}_{y}
		\bigl(\tilde{\mathbf{B}}\vec{\Phi}\bigr)
		+
		\vec{\Phi}^{T}{\mathbf{H}}\tilde{\mathbf{B}}^{T}
		\mathbf{D}_{y}\vec{\Phi}
		=
		\vec{\Phi}^{T}\mathbf{B}_{y}\tilde{\mathbf{B}}\vec{\Phi}.
		\]
		Under periodic boundary conditions, $\mathbf{B}_{x} = \mathbf{B}_{y}=0$ since $B_x=B_y=0$ according to \eqref{eq:sbp_def0}. Thus the above two terms vanish and \eqref{eq:thm41_1} becomes
		\[
		\frac{\mathrm{d}}{\mathrm{d}t}\|\vec{\Phi}\|_{\mathbf{P}{\mathbf{H}}}^2
		=
		-2\vec{\Phi}^{T}\mathbf{P}{\mathbf{H}}
		\bigl(\mathbf{C}+\mathbf{D}_{\mathrm{diss}}\bigr)\vec{\Phi}.
		\]
		Note that the matrix \(\mathbf{C}\) constructed from $C$ in \eqref{eq:matrix_ABC} is diagonal and positive semidefinite, thus $\mathbf{P}{\mathbf{H}} \mathbf{C}$ is positive semidefinite and $\vec{\Phi}^{T} \mathbf{P}{\mathbf{H}} \mathbf{C} \vec{\Phi} \ge 0$.
		
		On the other hand, we define
		\[
		\mathbf{P}^{1/2} = P^{1/2} \otimes I_{N_x} \otimes I_{N_y}
		\]
		and show that $\mathbf{P}^{1/2}$ commutes with both $\mathbf{H}$ and $\mathbf{D}_{\mathrm{diss}}$.
		The commutativity of $\mathbf{P}^{1/2}$ and $\mathbf{H}$ is obvious as they are diagonal matrices.
		Using the Kronecker product identity $(A \otimes B \otimes C)(D \otimes E \otimes F) = AD \otimes BE \otimes CF$,
		we can verify that $\mathbf{P}^{1/2}$ commutes with $\mathbf{D}_{\rm diss,x}$:
		\[
		\begin{aligned}
			\mathbf{P}^{1/2}\mathbf{D}_{\rm diss,x} &= \big(P^{1/2} \otimes I_{N_x} \otimes I_{N_y}\big)\big(I_5 \otimes D_{\rm diss,x} \otimes I_{N_y}\big) = P^{1/2} \otimes D_{\rm diss,x} \otimes I_{N_y}, \\
			\mathbf{D}_{\rm diss,x}\mathbf{P}^{1/2} &= \big(I_5 \otimes D_{\rm diss,x} \otimes I_{N_y}\big)\big(P^{1/2} \otimes I_{N_x} \otimes I_{N_y}\big) = P^{1/2} \otimes D_{\rm diss,x} \otimes I_{N_y},
		\end{aligned}
		\]
		Similarly, $\mathbf{P}^{1/2}$ commutes with $\mathbf{D}_{\rm diss,y}$, and thus further commutes with $\mathbf{D}_{\mathrm{diss}}$. With these, we have
		\[
		\begin{aligned}
			\vec{\Phi}^T \mathbf{P} \mathbf{H} \mathbf{D}_{\rm diss} \vec{\Phi}
			&= \vec{\Phi}^T \mathbf{P}^{1/2} \mathbf{H} \mathbf{D}_{\rm diss} \mathbf{P}^{1/2} \vec{\Phi} \\
			&= \big( \mathbf{P}^{1/2} \vec{\Phi} \big)^T \mathbf{H} \mathbf{D}_{\rm diss} \big( \mathbf{P}^{1/2} \vec{\Phi} \big) \ge 0.
		\end{aligned}
		\]
		Consequently, $\vec{\Phi}^{T}\mathbf{P}{\mathbf{H}}(\mathbf{C}+\mathbf{D}_{\mathrm{diss}})\vec{\Phi} \ge 0 $ and
		$
		\frac{\mathrm{d}}{\mathrm{d}t}
		\|\vec{\Phi}\|_{\mathbf{P}{\mathbf{H}}}^2 \le  0.
		$
		The proof is complete.
	\end{proof}

	\begin{rem}
		The above idea of constructing energy stable schemes based on the skew-symmetric form is mainly from \cite{Nordstrom2022PrimalDual,Nordstrom2022SkewEuler}, except for the dissipation operator $\mathbf{D}_{\mathrm{diss}}$.
		As will be seen in the numerical experiments, without $\mathbf{D}_{\mathrm{diss}}$ the numerical results suffer from serious oscillations. It is a challenging problem to design $\mathbf{D}_{\mathrm{diss}}$ that (1) satisfies \eqref{eq:Ddiss_cond} so that the scheme is energy stable, (2) effectively suppresses spurious oscillations near discontinuities, and (3) remains a high-order small term that does not affect the original convergence order for smooth solutions too much. We will propose such a dissipation operator in the next subsection.
	\end{rem}

	\subsection{Construction of operators}\label{sec:subsection_operator}
	
	In this subsection we construct SBP derivative operators and dissipation operators that satisfy the conditions of Theorem~\ref{thm:euler_energy_stability}.
	
	\subsubsection{SBP derivative operators}
	\label{subsec:sbp_construction}
	
	According to the above framework, once the 1D operators $D_x$ and $D_y$ in \eqref{eq:2D_operator} are SBP operators, the 2D operators $\mathbf{D}_{x}$ and $\mathbf{D}_{y}$ in \eqref{eq:2D_operator} satisfy the discrete integration-by-parts identities \eqref{eq:sbp_vector_x1} and \eqref{eq:sbp_vector_x2}, respectively.
	Here we construct the 1D SBP operators under periodic boundary conditions, while the corresponding treatment of non-periodic boundary conditions is left for future work.
	Both \(D_{x}\) and \(D_{y}\) are taken as the 1D SBP operator \(D\) defined below.
	
	Let $u$ be a scalar variable on the periodic domain $[x_{\min}, x_{\max}]$. Denote $\mathbf u = (u_1, u_2, \ldots, u_N)^T$ as the vector consisting of numerical solutions of $u$ at grid points $x_1, x_2, \ldots, x_N$ under a uniform mesh size $h$ ($x_1=x_{\min}, x_N = x_{\max}$). We define a pair of third-order upwind difference operators \cite{mattsson2017diagonal,JiangWang2024UpwindSBP}
	\begin{equation}\label{eq:D+D-}
		\begin{split}
			&D^-u|_i
			:=
			\frac1h
			\left(
			\frac16u_{i-2}
			-u_{i-1}
			+\frac12u_i
			+\frac13u_{i+1}
			\right), \\
			& D^+ u|_i :=
			\frac1h \left( -\frac{1}{3} u_{i-1} - \frac{1}{2} u_i + u_{i+1} - \frac{1}{6} u_{i+2} \right),
		\end{split}
	\end{equation}
	where $i=1,2,\ldots,N$.  For periodic problems, \(D^-\) and \(D^+\) are \(N\times N\) circulant matrices:
	\[
	D^- = \frac{1}{h}
	\begin{pmatrix}
		\frac12 & \frac13 & 0 & \cdots & 0 & \frac16 & -1 \\[4pt]
		-1 & \frac12 & \frac13 & 0 & \cdots & 0 & \frac16 \\[4pt]
		\frac16 & -1 & \frac12 & \frac13 & 0 & \cdots & 0 \\[4pt]
		\vdots & \ddots & \ddots & \ddots & \ddots & \ddots & \vdots \\[4pt]
		0 & \cdots & 0 & \frac16 & -1 & \frac12 & \frac13 \\[4pt]
		\frac13 & 0 & \cdots & 0 & \frac16 & -1 & \frac12
	\end{pmatrix},
	D^+ = \frac{1}{h}
	\begin{pmatrix}
		-\frac12 & 1 & -\frac16 & \cdots & 0 & 0 & -\frac13 \\[4pt]
		-\frac13 & -\frac12 & 1 & -\frac16 & \cdots & 0 & 0 \\[4pt]
		0 & -\frac13 & -\frac12 & 1 & \ddots & \cdots & 0 \\[4pt]
		\vdots & \ddots & \ddots & \ddots & \ddots & \ddots & \vdots \\[4pt]
		0 & \cdots & \cdots & -\frac13 & -\frac12 & 1 & -\frac16 \\[4pt]
		-\frac16 & 0 & \cdots & 0 & -\frac13 & -\frac12 & 1 \\[4pt]
		1 & -\frac16 & 0 & \cdots & 0 & -\frac13 & -\frac12
	\end{pmatrix}.
	\]
	%
	%
	It is direct to verify that
	\begin{equation}\label{eq:D_DT}
		D^+ + (D^-)^T=0.
	\end{equation}
	%
	With the above two upwind operators, we define the SBP operator as
	\begin{equation}
		\label{eq:Dskew_1d}
		D
		=
		\frac12(D^-+D^+),
	\end{equation}
	which corresponds to the fourth-order central difference scheme
	\begin{equation}\label{eq:scheme_without_stabilization}
		D u|_i
		=
		\frac1h
		\left(
		\frac1{12}u_{i-2}
		-\frac23u_{i-1}
		+\frac23u_{i+1}
		-\frac1{12}u_{i+2}
		\right).
	\end{equation}
	
	Define $H=h I_N$ with $I_N$ the \(N\times N\) identity matrix and set $Q=HD$.
	Using \eqref{eq:D_DT} and noting that $H$ is a scalar matrix, we have
	\[
	\begin{aligned}
		Q + Q^T
		&=
		HD +(HD)^T\\
		&=
		\frac12\left[
		HD^-+(HD^+)^T
		+
		HD^+ +(HD^-)^T
		\right]\\
		&= \frac12\left[
		H (D^- + (D^+)^T)
		+
		H (D^+ + (D^-)^T)
		\right]
		=0.
	\end{aligned}
	\]
	Thus according to Definition \ref{def:sbp}, $D$ is a SBP operator for periodic problems.
	
	\subsubsection{Artificial dissipation operator}
	\label{subsec:diss_construction}
	
	Next we construct an artificial dissipation operator
	\(\mathbf{D}_{\mathrm{diss}}\) that satisfies the dissipativity condition in Theorem~\ref{thm:Ddiss_psd}, \ie, $\vec{U}^T{\mathbf{H}}\mathbf{D}_{\mathrm{diss}}\vec{U}\ge 0, \forall \vec{U}$.
	According to \eqref{eq:2d_diss} and the definition
	\({\mathbf H} = I_5 \otimes H_x \otimes H_y\), it suffices to set both $D_{\mathrm{diss},x}$ and $D_{\mathrm{diss},y}$ as the 1D dissipation operator $D_{\mathrm{diss}}$ satisfying \(\mathbf{v}^{T}H D_{\mathrm{diss}} \mathbf{v} \ge 0\) for all \(\mathbf{v}\); see the proof of Theorem~\ref{thm:Ddiss_psd} below.
	%
	%
	Here we construct such an operator based on the biased one-sided upwind operator $D_{ws}^-$  from \cite{JiangWang2024UpwindSBP}, which incorporates nonlinear WENO weights and a stabilization technique.
	With $D_{ws}^-$, we develop its counterpart $D_{ws}^+$, the upwind operator biased towards the opposite side, and define  $D_{\mathrm{diss}}$ as
	\begin{equation}\label{eq:Ddiss_1d}
		D_{\mathrm{diss}}=\frac12(D_{ws}^- - D_{ws}^+).
	\end{equation}
	The details are given as follows.
	
	\paragraph{"Minus" upwind operator \(D_{ws}^-\).}
	Following \cite{JiangWang2024UpwindSBP} we rewrite the third-order upwind operator $D^-$ in \eqref{eq:D+D-} as
	\[
	D^-u|_i
	=
	\frac1h(\widehat u_i-\widehat u_{i-1}),\quad
	\widehat u_i
	=
	-\frac16u_{i-1}
	+\frac56u_i
	+\frac13u_{i+1}.
	\]
	Note that $\widehat u_i$ can be written as
	\begin{equation}\label{eq:hatui}
		\widehat u_i
		=
		d^{(1)}\widehat u_i^{(1),-}
		+
		d^{(2)}\widehat u_i^{(2),-},
	\end{equation}
	where
	\[
	\widehat u_i^{(1),-}
	=
	\frac12u_i+\frac12u_{i+1},
	\qquad
	\widehat u_i^{(2),-}
	=
	-\frac12u_{i-1}+\frac32u_i,
	\]
	and
	\[
	d^{(1)}=\frac23,\qquad d^{(2)}=\frac13.
	\]

	Next we formulate a WENO version of $\widehat u_i$ in \eqref{eq:hatui} by replacing the linear weights $d^{(1)}$ and $d^{(2)}$ with nonlinear ones. Define the WENO-Z type smoothness
	indicators \cite{YamaleevCarpenter2009ThirdOrderEnergyStableWENO}
	\[
	\beta_i^{(1),-}=(u_{i+1}-u_i)^2,
	\qquad
	\beta_i^{(2),-}=(u_i-u_{i-1})^2,
	\qquad
	\tau_i^-=(u_{i+1}-2u_i+u_{i-1})^2,
	\]
	and introduce the nonlinear weights
	\begin{equation}\label{eq:weno_weight}
		w_i^{(r),-}
		=
		\frac{\alpha_i^{(r),-}}
		{\alpha_i^{(1),-}+\alpha_i^{(2),-}}, \qquad
		\alpha_i^{(r),-}
		=
		d^{(r)}
		\left(
		1+\frac{\tau_i^-}{\varepsilon+\beta_i^{(r),-}}
		\right),
		\qquad r=1,2,
	\end{equation}
	where $\varepsilon >0$ is a small parameter. Then the WENO version of $\widehat u_i$ is
	\begin{equation}
		\label{eq:weno_flux_def}
		\widehat u_i^{\,w,-}
		=
		w_i^{(1),-}\widehat u_i^{(1),-}
		+
		w_i^{(2),-}\widehat u_i^{(2),-}
	\end{equation}
	and the corresponding raw WENO-type "minus" operator is \cite{JiangWang2024UpwindSBP}
	\begin{equation}
		\label{eq:Dw_minus_pointwise}
		D_w^-u|_i
		=
		\frac1h
		\left(\widehat u_i^{\,w,-}-\widehat u_{i-1}^{\,w,-}\right).
	\end{equation}
	%
	%
	Note that $D_w^-u|_i$ can be written as
	\begin{equation} \label{eq:Dw_minus_component}
		\begin{split}
			D_w^-u|_i
			&= \frac1h  \left(\widehat u_i^{\,w,-}-\widehat u_{i-1}^{\,w,-}\right)\\
			& = \frac1h \left(
			w_i^{(1),-}\widehat u_i^{(1),-}
			+
			w_i^{(2),-}\widehat u_i^{(2),-}
			- w_{i-1}^{(1),-}\widehat u_{i-1}^{(1),-}
			-
			w_{i-1}^{(2),-}\widehat u_{i-1}^{(2),-}
			\right) \\
			& = \frac1h \left[
			\frac12 w_{i-1}^{(2),-} u_{i-2}
			-\left( \frac12 w_{i}^{(2),-}  + \frac12 w_{i-1}^{(1),-} + \frac32 w_{i-1}^{(2),-} \right) u_{i-1} \right.\\
			& \hspace{12mm}
			\left.
			+\left( \frac12 w_{i}^{(1),-}  + \frac32 w_{i}^{(2),-} - \frac12 w_{i-1}^{(1),-} \right) u_{i}
			+ \frac12 w_{i}^{(1),-} u_{i+1}
			\right].
		\end{split}
	\end{equation}
	Under periodic boundary conditions, the above equations at all grid points $x_1, x_2, \ldots, x_N$ can be written in the vector form $H D_w^- \mathbf u$, where $H=h I_N$ and $D_w^-$ is an $N \times N$ matrix depending on the nonlinear weights.
	
	Following \cite{JiangWang2024UpwindSBP}, we decompose $D_w^-$ as
	\begin{equation}
		D_w^-
		=
		H^{-1}(Q_{w}^-+R_{w}^-),
	\end{equation}
	where
	\[
	Q_{w}^-
	=
	\frac12\left(HD_w^- - (HD_w^-)^T\right),
	\qquad
	R_{w}^-
	=
	\frac12\left(HD_w^-+(HD_w^-)^T\right)
	\]
	are skew-symmetric and symmetric matrices, respectively.
	Note that
	$$
	\mathbf v^T Q_{w}^- \mathbf v = \frac12 \mathbf v^T ( Q_{w}^- + (Q_{w}^-)^T ) \mathbf v = 0,
	\quad \forall \mathbf v \in \mathbb{R}^N,
	$$
	since $Q_{w}^-$ is skew-symmetric. Although $R_{w}^-$ is symmetric, it may not be positive semidefinite.
	In the following, we add a symmetric stabilization term \(R_s^-\) to obtain the final operator $D_{ws}^-$:
	\begin{equation}
		\label{eq:Dws_minus_def}
		D_{ws}^-
		:=D_w^- + H^{-1} R_s^-
		=
		H^{-1}\left(Q_{w}^- + R_{w}^-+R_s^-\right),
	\end{equation}
	so that $R_{w}^-+R_s^-$ is symmetric positive semidefinite.
	
	From the expression of $D_w^-$ in \eqref{eq:Dw_minus_component}, it is straightforward to verify that the entries of the symmetric matrix $R_{w}^-$ are given by
	\[
	(R_w^-)_{ij}
	=
	\begin{cases}
		\dfrac14 w_{i-1}^{(2),-},
		& j=i-2,\\[4pt]
		-\dfrac14\left(3w_{i-1}^{(2),-}+w_i^{(2),-}\right),
		& j=i-1,\\[4pt]
		w_i^{(2),-}+\dfrac12w_{i-1}^{(2),-},
		& j=i,\\[4pt]
		-\dfrac14\left(3w_i^{(2),-}+w_{i+1}^{(2),-}\right),
		& j=i+1,\\[4pt]
		\dfrac14 w_{i+1}^{(2),-},
		& j=i+2,\\[4pt]
		0,
		& \text{otherwise}.
	\end{cases}
	\]
	Then for any $\mathbf v = (v_1, v_2,..., v_N)^T$, the corresponding quadratic form can be written as
	\begin{equation*}
		\mathbf v^T R_w^- \mathbf v
		=  \sum_{i=1}^N \lambda_i^{-}(v_i-v_{i+1})^2
		+
		\sum_{i=1}^N \gamma_i^{-}
		(v_{i-1}-2v_i+v_{i+1})^2 ,
	\end{equation*}
	where the indices are periodic and
	\[
	\lambda_i^{-}
	=
	\frac14(w_i^{(2),-} - w_{i+1}^{(2),-}),
	\qquad
	\gamma_i^{-}
	=
	\frac14w_i^{(2),-}.
	\]
	From this, $R_w^-$ can be decomposed as
	\begin{equation}\label{eq:Rw_minus}
		R_{w}^-
		=
		\Delta_1^T\Lambda^{-}\Delta_1
		+
		\Delta_2^T\Gamma^{-}\Delta_2,
	\end{equation}
	where $\Lambda^{-} = \diag(\lambda_1^{-}, \lambda_2^{-}, \ldots, \lambda_N^{-}),
	\Gamma^{-} = \diag(\gamma_1^{-}, \gamma_2^{-}, \ldots, \gamma_N^{-})$ and $\Delta_1, \Delta_2$ are constant matrices defined by
	\[
	\Delta_1u|_i=u_i-u_{i+1},
	\qquad
	\Delta_2u|_i=u_{i-1}-2u_i+u_{i+1}.
	\]
	Note that the coefficient $\gamma_i^-$ is nonnegative and thus the second term in \eqref{eq:Rw_minus} is already positive semidefinite. In contrast, the coefficient \(\lambda_i^{-}\) may change sign due to differences of neighboring WENO weights. To fix this problem, we denote $\delta_1=h^3$	and define
	\[
	\lambda_{s,i}^{-}
	=
	\frac12
	\left(
	\sqrt{\left(\lambda_i^{-}\right)^2+\delta_1^2}
	-
	\lambda_i^{-}
	\right).
	\]
	Then
	\[
	\lambda_i^{-}+\lambda_{s,i}^{-}
	=
	\frac12
	\left(
	\lambda_i^{-}
	+
	\sqrt{\left(\lambda_i^{-}\right)^2+\delta_1^2}
	\right)
	\ge 0.
	\]
	From this we define
	\begin{equation}\label{eq:Rs}
		R_s^-
		=
		\Delta_1^T\Lambda_s^{-}\Delta_1,
		\qquad
		\Lambda_s^{-} = \diag(\lambda_{s,1}^{-}, \lambda_{s,2}^{-}, \ldots, \lambda_{s,N}^{-})
	\end{equation}
	and have
	\begin{equation}
		\label{eq:M_minus_psd}
		R_w^-+R_s^-
		=
		\Delta_1^T(\Lambda^{-}+\Lambda_s^{-})\Delta_1
		+
		\Delta_2^T\Gamma^{-}\Delta_2
		\ge 0.
	\end{equation}
	Therefore, we have determined the symmetric stabilization term \(R_s^-\) in \eqref{eq:Dws_minus_def}, the component form of which is given by
	\[
	R_s^- u|_i
	= \lambda_{s,i}^{-}(u_i-u_{i+1})
	-
	\lambda_{s,i-1}^{-}(u_{i-1}-u_i).
	\]
	Finally, the component form of $D_{ws}^-$ is
	\begin{equation}\label{eq:Dws_minus_pointwise}
		\begin{split}
			D_{ws}^-u|_i
			& = D_{w}^-u|_i + \frac1h R_s^- u|_i \\
			& =
			\frac{\widehat u_i^{\,w,-}-\widehat u_{i-1}^{\,w,-}}{h}
			+
			\frac1h
			\left[
			\lambda_{s,i}^{-}(u_i-u_{i+1})
			-
			\lambda_{s,i-1}^{-}(u_{i-1}-u_i)
			\right],
		\end{split}
	\end{equation}
	where $\widehat u_i^{\,w,-}$ is defined in \eqref{eq:weno_flux_def} with WENO weights.

	\paragraph{"Plus" upwind operator \(D_{ws}^+\).}
	
	Based on the construction of the biased one-sided upwind operator $D_{ws}^-$, we develop its counterpart $D_{ws}^+$, the upwind operator biased towards the opposite side.
	%
	%
	Similar to the definition of  $\widehat u_i$ in \eqref{eq:weno_flux_def}, we define $\widehat u_i^{\,w,+}$ as the WENO version of
	$D^+|u_i$ in \eqref{eq:D+D-}:
	\[
	\widehat u_i^{\,w,+}
	=
	w_i^{(1),+}\widehat u_i^{(1),+}
	+
	w_i^{(2),+}\widehat u_i^{(2),+},
	\]
	where
	\[
	\widehat u_i^{(1),+}
	=
	\frac12u_i+\frac12u_{i+1},
	\qquad
	\widehat u_i^{(2),+}
	=
	-\frac12u_{i+2}+\frac32u_{i+1}
	\]
	and the WENO weights are given by
	\[
	w_i^{(r),+}
	=
	\frac{\alpha_i^{(r),+}}
	{\alpha_i^{(1),+}+\alpha_i^{(2),+}},
	\qquad
	\alpha_i^{(r),+}
	=
	d^{(r)}
	\left(
	1+\frac{\tau_i^+}{\varepsilon+\beta_i^{(r),+}}
	\right),	
	\qquad r=1,2.
	\]
	In the above equation, the smoothness indicators are
	\[
	\beta_i^{(1),+}=(u_{i+1}-u_i)^2,
	\qquad
	\beta_i^{(2),+}=(u_{i+2}-u_{i+1})^2,
	\qquad
	\tau_i^+=(u_i-2u_{i+1}+u_{i+2})^2.
	\]
	With these, we define the raw WENO-type "plus" operator
	\begin{equation}
		\label{eq:Dw_plus_pointwise}
		D_w^+u|_i
		=
		\frac1h
		\left(\widehat u_i^{\,w,+}-\widehat u_{i-1}^{\,w,+}\right),
	\end{equation}
	which can be written as
	\begin{equation} \label{eq:Dw_plus_component}
		\begin{split}
			D_w^+u|_i
			&= \frac1h  \left(\widehat u_i^{\,w,+}-\widehat u_{i-1}^{\,w,+}\right)\\
			& = \frac1h \left(
			w_i^{(1),+}\widehat u_i^{(1),+}
			+
			w_i^{(2),+}\widehat u_i^{(2),+}
			- w_{i-1}^{(1),+}\widehat u_{i-1}^{(1),+}
			-
			w_{i-1}^{(2),+}\widehat u_{i-1}^{(2),+}
			\right) \\
			& = \frac1h \left[
			-\frac12 w_{i-1}^{(1),+} u_{i-1}
			+\left( \frac12 w_i^{(1),+} - \frac12 w_{i-1}^{(1),+}
			-\frac32 w_{i-1}^{(2),+} \right)u_i \right.\\
			& \hspace{12mm}
			\left.
			+\left( \frac12 w_i^{(1),+}+\frac32 w_i^{(2),+}
			+\frac12 w_{i-1}^{(2),+} \right)u_{i+1}
			-\frac12 w_i^{(2),+} u_{i+2}
			\right].
		\end{split}
	\end{equation}
	Under periodic boundary conditions, the above equations at all grid points $x_1, x_2, \ldots, x_N$ can be written in the vector form $H D_w^+ \mathbf u$, where $H=h I_N$ and $D_w^+$ is an $N \times N$ matrix depending on the nonlinear weights.
	
	With the above $D_w^+$, we define the "plus" operator $D_{ws}^+$ as
	\begin{equation}
		\label{eq:Dws_plus_def}
		D_{ws}^+
		:=D_w^+ + H^{-1} R_s^+
		=
		H^{-1}\left(Q_{w}^+ + R_{w}^+ +R_s^+\right),
	\end{equation}
	where
	\[
	Q_w^+
	=
	\frac12\left(HD_w^+ - (HD_w^+)^T\right),
	\qquad
	R_w^+
	=
	\frac12\left(HD_w^+ + (HD_w^+)^T\right),
	\]
	and $R_s^+$ is a stabilization term so that $R_w^+ + R_s^+$ is symmetric negative semidefinite.
	To determine $R_s^+$, we denote
	\[
	\lambda_i^{+}
	=
	\frac14(w_i^{(2),+} - w_{i-1}^{(2),+}),
	\qquad
	\gamma_i^{+}
	=
	\frac14w_i^{(2),+}
	\]
	and decompose $R_w^+$ as
	\begin{equation}\label{eq:Rw_plus}
		R_w^+
		=
		-\Delta_1^T\Lambda^{+}\Delta_1
		-
		(\Delta_3)^T\Gamma^{+}\Delta_3,
	\end{equation}
	where $\Lambda^{+} = \diag(\lambda_1^{+}, \lambda_2^{+}, \ldots, \lambda_N^{+})$,
	$\Gamma^{+} = \diag(\gamma_1^{+}, \gamma_2^{+}, \ldots, \gamma_N^{+})$ and $\Delta_1, \Delta_3$ are constant matrices defined by
	\[
	\Delta_1u|_i=u_i-u_{i+1},
	\qquad
	\Delta_3u|_i=u_i-2u_{i+1}+u_{i+2}.
	\]
	Having the decomposition \eqref{eq:Rw_plus}, we define
	\begin{equation}\label{eq:Rs+}
		R_s^+
		=
		- \Delta_1^T\Lambda_s^{+}\Delta_1,
		\qquad
		\Lambda_s^{+} = \diag(\lambda_{s,1}^{+}, \lambda_{s,2}^{+}, \ldots, \lambda_{s,N}^{+})
	\end{equation}
	with $\delta_1=h^3$ and
	\[
	\lambda_{s,i}^{+}
	=
	\frac12
	\left(
	\sqrt{\left(\lambda_i^{+}\right)^2+\delta_1^2}
	-
	\lambda_i^{+}
	\right).
	\]
	Then
	\[
	\lambda_i^{+}+\lambda_{s,i}^{+}
	=
	\frac12
	\left(
	\lambda_i^{+}
	+
	\sqrt{\left(\lambda_i^{+}\right)^2+\delta_1^2}
	\right)
	\ge 0
	\]
	and thereby
	\begin{equation}
		\label{eq:M_plus_psd}
		R_w^+ + R_s^+
		=
		-\Delta_1^T(\Lambda^{+} + \Lambda_s^{+})\Delta_1
		-
		(\Delta_2^+)^T\Gamma^{+}\Delta_2^+
		\le 0.
	\end{equation}
	The component forms of $R_s^+$ and $D_{ws}^+$ are given by
	\[
	R_s^+ u|_i
	= \lambda_{s,i}^{+}(u_{i+1}-u_i)
	-
	\lambda_{s,i-1}^{+}(u_i-u_{i-1})
	\]
	and
	\begin{equation}
		\label{eq:Dws_plus_pointwise}
		\begin{split}
			D_{ws}^+u|_i
			& = D_{w}^+u|_i + \frac1h R_s^+ u|_i \\
			& =
			\frac{\widehat u_i^{\,w,+}-\widehat u_{i-1}^{\,w,+}}{h}
			+
			\frac1h
			\left[
			\lambda_{s,i}^{+}(u_{i+1}-u_i)
			-
			\lambda_{s,i-1}^{+}(u_i-u_{i-1})
			\right].
		\end{split}
	\end{equation}
	Substituting \eqref{eq:Dws_minus_pointwise} and \eqref{eq:Dws_plus_pointwise} into \eqref{eq:Ddiss_1d} gives
	\begin{equation}\label{eq:Ddiss_pointwise}
		\begin{split}
			D_{\mathrm{diss}}u|_i
			& = \frac12 D_{ws}^-u|_i - \frac12 D_{ws}^+u|_i\\
			&=
			\frac{\widehat u_i^{\,w,-}-\widehat u_{i-1}^{\,w,-}}{2h}
			-
			\frac{\widehat u_i^{\,w,+}-\widehat u_{i-1}^{\,w,+}}{2h}
			\\
			&\quad
			+
			\frac{1}{2h}
			\left[
			(\lambda_{s,i}^{-}+\lambda_{s,i}^{+})(u_i-u_{i+1})
			-
			(\lambda_{s,i-1}^{-}+\lambda_{s,i-1}^{+})(u_{i-1}-u_i)
			\right].
		\end{split}
	\end{equation}
	
	\paragraph{Dissipativity and Accuracy.}
	We now show the dissipation property of  the 1D operator \eqref{eq:Ddiss_1d} and the corresponding 2D operator \(\mathbf{D}_{\mathrm{diss}}\), which is needed for the energy stability in Theorem~\ref{thm:euler_energy_stability}.
	
	\begin{thm}
		\label{thm:Ddiss_psd}
		Under periodic boundary conditions, the 1D dissipative operator \eqref{eq:Ddiss_1d} with $H=h I_N$ satisfies
		\begin{equation}\label{eq:1D_HD_dissi}
			\mathbf v^T HD_{\mathrm{diss}} \mathbf v \ge 0
			\qquad
			\forall \mathbf v \in \mathbb R^N.
		\end{equation}
		Therefore, the corresponding 2D dissipation operator \(\mathbf{D}_{\mathrm{diss}}\) satisfies the condition \eqref{eq:Ddiss_cond}, \ie, $\vec{U}^T{\mathbf{H}}\mathbf{D}_{\mathrm{diss}}\vec{U}\ge 0, \forall \vec{U} \in \mathbb R^{5N_{x}N_{y}}$.
		Here \(\mathbf{D}_{\mathrm{diss}}\) is defined as \eqref{eq:2d_diss},  $\mathbf{H} = I_5\otimes H_{x}\otimes H_{y}$, $H_{x} = \delta_x I_{N_x}$, $H_{y} = \delta_y I_{N_y}$, and $D_{\mathrm{diss},x}$ and $D_{\mathrm{diss},y}$ are the 1D dissipation operator $D_{\mathrm{diss}}$.
	\end{thm}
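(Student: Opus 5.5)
The plan is to reduce the 1D inequality \eqref{eq:1D_HD_dissi} to the already established sign properties \eqref{eq:M_minus_psd} and \eqref{eq:M_plus_psd}. Then the 2D claim follows by a Kronecker-product argument.

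\textbf{Step 1 (1D).} By \eqref{eq:Dws_minus_def} and \eqref{eq:Dws_plus_def}, $HD_{\mathrm{diss}} = \frac12\bigl(Q_w^- + R_w^- + R_s^- - Q_w^+ - R_w^+ - R_s^+\bigr)$. For any $\mathbf v$ the skew-symmetric parts drop out, since $\mathbf v^TQ_w^\pm\mathbf v=0$. Hence
\[
\mathbf v^T H D_{\mathrm{diss}}\mathbf v=\tfrac12\,\mathbf v^T(R_w^-+R_s^-)\mathbf v-\tfrac12\,\mathbf v^T(R_w^++R_s^+)\mathbf v .
\]
The first term is nonnegative by \eqref{eq:M_minus_psd}, and the second is nonnegative by \eqref{eq:M_plus_psd}. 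The weights $w$ depend on $\mathbf u$, but for fixed weights the operator is a matrix. The argument is applied with the weights frozen at the current state, which is exactly how $\mathbf v^THD_{\mathrm{diss}}\mathbf v$ is interpreted in Theorem~\ref{thm:euler_energy_stability}.

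\textbf{Main obstacle.} The real content sits in the decompositions \eqref{eq:Rw_minus} and \eqref{eq:Rw_plus}, which were stated rather than verified. I would confirm \eqref{eq:Rw_minus} by expanding $\sum_i\lambda_i^-(v_i-v_{i+1})^2+\sum_i\gamma_i^-(v_{i-1}-2v_i+v_{i+1})^2$ and matching each diagonal and off-diagonal coefficient against the listed entries of $R_w^-$.
\begin{itemize}
\item The $(i,i+2)$ entry comes only from $\gamma_{i+1}^-$, giving $\frac14 w_{i+1}^{(2),-}$.
\item The $(i,i+1)$ entry is $-\lambda_i^- -2\gamma_i^- -2\gamma_{i+1}^-$, which equals $-\frac14(3w_i^{(2),-}+w_{i+1}^{(2),-})$.
\item The diagonal works out similarly.
\end{itemize}
I would check \eqref{eq:Rw_plus} analogously from \eqref{eq:Dw_plus_component}, noting that its second block is $\Delta_3^T\Gamma^+\Delta_3$ (written $\Delta_2^+$ in \eqref{eq:M_plus_psd}). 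Since $\gamma_i^\pm\ge0$ and $\lambda_i^\pm+\lambda_{s,i}^\pm\ge0$, the semidefiniteness then follows as already noted.

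\textbf{Step 2 (2D).} Write $\mathbf H\mathbf D_{\mathrm{diss}}=I_5\otimes (H_xD_{\mathrm{diss},x})\otimes H_y + I_5\otimes H_x\otimes (H_yD_{\mathrm{diss},y})$, using the mixed-product rule. The weights depend on the grid line, so $D_{\mathrm{diss},x}$ really acts line by line. I would therefore treat the $x$-term as a sum over $j$ and components $k$ of $\delta_y\,\mathbf v_{k,\cdot j}^T H_xD_{\mathrm{diss}}\mathbf v_{k,\cdot j}$, where each $\mathbf v_{k,\cdot j}$ is a 1D periodic grid function. Each summand is nonnegative by Step 1 with $h=\delta_x$, and $\delta_y>0$. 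The $y$-term is handled the same way. Summing the two gives \eqref{eq:Ddiss_cond}.
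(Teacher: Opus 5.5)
Your proposal is correct and follows the paper's proof. The paper likewise drops the skew-symmetric parts $Q_w^\pm$, invokes \eqref{eq:M_minus_psd} and \eqref{eq:M_plus_psd}, and lifts the result to 2D via the mixed-product rule. Your additions only tighten that argument: you keep the factor $\frac12$ the paper omits, you check the decompositions \eqref{eq:Rw_minus}--\eqref{eq:Rw_plus} (they do hold, using $w^{(1)}+w^{(2)}=1$), and you argue line by line because the weights vary with the grid line and the component.
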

	
	\begin{proof}
		According to the decompositions of $D_{ws}^-$ and $D_{ws}^+$ in \eqref{eq:Dws_minus_def} and \eqref{eq:Dws_plus_def}, respectively, we have
		\begin{equation*}
			\begin{split}
				\mathbf v^T HD_{\mathrm{diss}} \mathbf v
				& =\mathbf v^T (Q_w^- + R_w^- + R_s^- - Q_w^+ - R_w^+ - R_s^+) \mathbf v \\
				& = \mathbf v^T ( R_w^- + R_s^- - R_w^+ - R_s^+) \mathbf v \\
				& = \mathbf v^T ( R_w^- + R_s^-)\mathbf v   - \mathbf v^T  (R_w^+ + R_s^+) \mathbf v \ge 0,
			\end{split}
		\end{equation*}
		where the second equality follows from the fact that $Q_w^-$ and $Q_w^+$ are skew-symmetric matrices. Thus \eqref{eq:1D_HD_dissi} holds.
		Furthermore, by setting $D_{\mathrm{diss},x} = D_{\mathrm{diss}} $ and using the property of Kronecker products, we have
		\[
		\begin{split}
			{\mathbf H}\mathbf D_{\mathrm{diss},x}
			&=
			(I_5\otimes H_x\otimes H_y)
			(I_5\otimes D_{\mathrm{diss},x}\otimes I_{N_y})  \\
			&= I_5\otimes (H_xD_{\mathrm{diss},x})\otimes H_y\\
			&= \delta_y I_5\otimes (H_xD_{\mathrm{diss},x})\otimes I_{N_y}.
		\end{split}
		\]
		From this and \eqref{eq:1D_HD_dissi}, it is easy to see that $\vec{U}^T{\mathbf{H}}\mathbf{D}_{\mathrm{diss},x}\vec{U}\ge 0$ for all $\vec{U} \in \mathbb R^{5N_{x}N_{y}}$. A similar result holds for $\mathbf{D}_{\mathrm{diss},y}$, and thus condition \eqref{eq:Ddiss_cond} is satisfied.
		
	\end{proof}

	Next we analyse the accuracy of the dissipation term for smooth solutions.
	For	the underlying linear third-order upwind pair $D^-$ and $D^+$ in \eqref{eq:D+D-}, Taylor expansion at \(x_i\) gives
	\[
	D^-u|_i
	=
	u_x+\frac1{12}h^3u_{xxxx}+O(h^4), \quad
	D^+u|_i
	=
	u_x-\frac1{12}h^3u_{xxxx}+O(h^4).
	\]
	Thus
	\begin{equation}
		\label{eq:Ddiss_linear_accuracy_p3}
		\frac12(D^- -D^+)u|_i
		=
		\frac1{12}h^3u_{xxxx}+O(h^4).
	\end{equation}
	We further show that the nonlinear WENO reconstruction does not change the order of the above error.
	Indeed, for smooth solutions, we have
	\[
	\beta_i^{(1),-}
	=
	u_x^2h^2+u_xu_{xx}h^3
	+
	\left(\frac14u_{xx}^2+\frac13u_xu_{xxx}\right)h^4
	+
	O(h^5),
	\]
	\[
	\beta_i^{(2),-}
	=
	u_x^2h^2-u_xu_{xx}h^3
	+
	\left(\frac14u_{xx}^2+\frac13u_xu_{xxx}\right)h^4
	+
	O(h^5),
	\]
	and
	\[
	\tau_i^-
	=
	\left(u_{i+1}-2u_i+u_{i-1}\right)^2
	=
	u_{xx}^2h^4+O(h^6).
	\]
	For sufficiently small $\varepsilon$ (\(\varepsilon=O(h^2)\)), the ratios
	\(\tau_i^-/(\varepsilon+\beta_i^{(1),-})\) and
	\(\tau_i^-/(\varepsilon+\beta_i^{(2),-})\) share the same leading
	\(O(h^2)\) term and differ only by \(O(h^3)\). Hence
	\begin{equation}\label{eq:w-d}
		w_i^{(r),-}
		-
		d^{(r)} = O(h^3),
		\qquad r=1,2.
	\end{equation}
	On the other hand, noting that
	$
	\sum_{r=1}^{2}\left(w_i^{(r),-}-d^{(r)}\right)=0,
	$
	we obtain
	\begin{equation*}
		\begin{split}
			\widehat u_i^{\,w,-}-\widehat u_i
			&= \left(w_i^{(1),-}-d^{(1)}\right) \widehat u_i^{(1),-} + \left(w_i^{(2),-}-d^{(2)}\right) \widehat u_i^{(2),-}\\
			&=
			\left(w_i^{(1),-}-d^{(1)}\right)
			\left(\widehat u_i^{(1),-}-\widehat u_i^{(2),-}\right)\\
			&= \frac12 \left(w_i^{(1),-}-d^{(1)}\right)
			\left(u_{i-1}-2u_i+u_{i+1}\right) = O(h^5).
		\end{split}
	\end{equation*}
	It then follows that
	\[
	D_w^-u|_i-D^-u|_i
	=
	\frac{
		\left(\widehat u_i^{\,w,-}-\widehat u_i\right)
		-
		\left(\widehat u_{i-1}^{\,w,-}-\widehat u_{i-1}\right)}
	{h}
	=
	O(h^4).
	\]
	Similarly, one can show that $D_w^+u|_i-D^+u|_i=O(h^4)$ for smooth solutions.
	Combining these and \eqref{eq:Ddiss_linear_accuracy_p3}, we have
	\begin{equation}
		\label{eq:Draw_accuracy_p3}
		\frac12(D_w^- - D_w^+)u|_i
		=
		\frac1{12}h^3u_{xxxx}+O(h^4).
	\end{equation}

	It remains to account for the stabilization term in \eqref{eq:Ddiss_pointwise}, \ie,
	\begin{equation}\label{eq:stabilization}
		\frac{1}{2h}
		\left[
		(\lambda_{s,i}^{-}+\lambda_{s,i}^{+})(u_i-u_{i+1})
		-
		(\lambda_{s,i-1}^{-}+\lambda_{s,i-1}^{+})(u_{i-1}-u_i)
		\right].
	\end{equation}
	It follows from \eqref{eq:w-d} that	\(w_i^{(2),-}-w_{i+1}^{(2),-}=O(h^3)\), and thereby
	$
	\lambda_i^-
	=
	\frac14\left(w_i^{(2),-}-w_{i+1}^{(2),-}\right)
	=
	O(h^3).
	$
	This further yields \(\lambda_{s,i}^-=O(h^3)\) since $\delta_1 = h^3$. Similarly, it can be shown that \(\lambda_{s,i}^+=O(h^3)\). With these and \(u_i-u_{i+1}=O(h)\), we see that the stabilization term in \eqref{eq:stabilization} is also \(O(h^3)\) in smooth regions.

	From this and \eqref{eq:Ddiss_linear_accuracy_p3}, we conclude that for smooth solutions the dissipation term $D_{\mathrm{diss}}u|_i$ in \eqref{eq:Dws_minus_pointwise} is $O(h^3)$, which is one order lower than the fourth-order scheme \eqref{eq:scheme_without_stabilization} without the dissipation term. The same conclusion also holds for the 2D scheme \eqref{eq:euler_pointwise}, \ie, its truncation error is $O(h^3)$, where the error of the scheme without dissipation is $O(h^4)$ and that of the dissipation term is $O(h^3)$.

	\begin{rem}
		Unlike the work \cite{JiangWang2024UpwindSBP}, where the upwind operators \(D_{ws}^-\) and \(D_{ws}^+\) are applied to the fluxes of the hyperbolic equations, here we use them to construct the dissipation operator $D_{\mathrm{diss}}$.
		As analyzed above, for smooth solutions the dissipation term with $D_{\mathrm{diss}}$ is $O(h^3)$, leading to the third-order accuracy of the final scheme. For discontinuous problems, thanks to the WENO reconstruction, it can efficiently suppress the oscillations of the original scheme without the dissipation term. These properties are demonstrated in our numerical examples.
		Additionally, the final semi-discrete scheme is provably energy stable with this dissipation term.
	\end{rem}

	\section{Energy stable schemes for reactive Navier--Stokes equations}
	\label{sec:ns_discrete}
	\setcounter{equation}{0}
	
	
	In this section we construct an energy stable scheme for the reactive Navier--Stokes equations based on the skew-symmetric form \eqref{eq:final_form_ns}. For the convective part $(A_1\Phi)_{x_1} + A_2\Phi_{x_1} + (B_1\Phi)_{x_2} + B_2\Phi_{x_2}$, the discretizations are the same as those for the reactive Euler equations. Thus we only need to further discretize the diffusion term $S$, whose specific expressions are given in \eqref{eq:rns_primitive}.
	
	\subsection{Spatial discretization}
	
	Using the same notation as in \eqref{eq:euler_semidiscrete} for the reactive Euler equations, the semi-discrete finite difference scheme for the reactive Navier--Stokes equations \eqref{eq:final_form_ns} under periodic boundary conditions can be written as
	\begin{equation}
		\label{eq:ns_semidiscrete}
		\vec{\Phi}_t
		+
		\mathbf{D}_{x}(\mathbf{A}_1\vec{\Phi})
		+
		\mathbf{A}_2\mathbf{D}_{x}\vec{\Phi}
		+
		\mathbf{D}_{y}(\mathbf{B}_1\vec{\Phi})
		+
		\mathbf{B}_2\mathbf{D}_{y}\vec{\Phi}
		+
		\mathbf{C}\vec{\Phi}
		=
		-\mathbf{D}_{\mathrm{diss}}\vec{\Phi}
		+
		\mathbf{\Lambda}\vec{S}.
	\end{equation}
	Here \(\mathbf{\Lambda} \in \mathbb R^{5N_{x}N_{y} \times 5N_{x}N_{y}}\) and $\vec{S} \in \mathbb R^{5N_{x}N_{y}}$ are the discrete counterparts
	of the continuous \(\Lambda\) and $S$ in
	\eqref{eq:final_form_ns}, respectively. Specifically, \(\mathbf{\Lambda}\) has the block-diagonal structure
	\begin{equation}\label{eq:Lambda_discrete}
		\mathbf{\Lambda}
		=
		\begin{pmatrix}
			\boldsymbol{\lambda}_1 & & & & \\
			& \boldsymbol{\lambda}_2 & & & \\
			& & \boldsymbol{\lambda}_3 & & \\
			& & & \boldsymbol{\lambda}_4 & \\
			& & & & \boldsymbol{\lambda}_5
		\end{pmatrix},
		\qquad
		\boldsymbol{\lambda}_k
		=
		\operatorname{diag}\bigl(
		\lambda_k(x_{1},y_{1}),\dots,
		\lambda_k(x_{N_x},y_{N_y})
		\bigr),
	\end{equation}
	where \(\lambda_k\) denotes the \(k\)-th diagonal entry of \(\Lambda\), \ie,
	\(\lambda_1 = 1/(2\phi_1)\), \(\lambda_2 = \phi_1\), \(\lambda_3 = \phi_1\),
	\(\lambda_4 = 1/(2\phi_4)\), \(\lambda_5 = 1/(2\phi_5)\).
	
	
	Using the same SBP operators $D_x$ and $D_y$ as in Section~\ref{sec:subsection_operator}, both taken as the operator $D$ in \eqref{eq:Dskew_1d} to approximate $\partial_x$ and $\partial_y$, respectively,
	we denote
	\begin{equation}\label{eq:mathcal_DxDy}
		\mathcal{D}_x = D_x\otimes I_{N_y} \in \mathbb R^{N_{x}N_{y}},
		\qquad
		\mathcal{D}_y = I_{N_x}\otimes D_y \in \mathbb R^{N_{x}N_{y}}.
	\end{equation}
	Then the diffusion term \(\mathbf{\Lambda}\vec S \in \mathbb R^{5N_{x}N_{y}} \) is given by
	\begin{equation}\label{eq:LambdaS}
		\mathbf{\Lambda}\vec{S}
		=
		\begin{pmatrix}
			0\\[0.4em]
			\operatorname{diag}(\vec{\Phi}_1)^{-1}
			\bigl(
			\mathcal{D}_{x}\vec{\tau}_{11}
			+
			\mathcal{D}_{y}\vec{\tau}_{12}
			\bigr)\\[0.7em]
			\operatorname{diag}(\vec{\Phi}_1)^{-1}
			\bigl(
			\mathcal{D}_{x}\vec{\tau}_{21}
			+
			\mathcal{D}_{y}\vec{\tau}_{22}
			\bigr)\\[0.7em]
			\dfrac{\gamma-1}{2}
			\operatorname{diag}(\vec{\Phi}_4)^{-1}
			\Bigl[
			\mathcal{D}_{x}\bigl(\kappa\mathcal{D}_{x}\vec{T}+q\sigma\mathcal{D}_{x}\vec{Y}\bigr)
			+
			\mathcal{D}_{y}\bigl(\kappa\mathcal{D}_{y}\vec{T}+q\sigma\mathcal{D}_{y}\vec{Y}\bigr)
			+
			\vec{\Psi}
			\Bigr]\\[0.9em]
			\dfrac12
			\operatorname{diag}(\vec{\Phi}_5)^{-1}
			\bigl(
			\mathcal{D}_{x}\bigl(\sigma\mathcal{D}_{x}\vec{Y}\bigr)
			+
			\mathcal{D}_{y}\bigl(\sigma\mathcal{D}_{y}\vec{Y}\bigr)
			\bigr)
		\end{pmatrix}.
	\end{equation}
	Note that the primitive variables in the above expressions are computed from the transformed state vector \(\vec{\Phi}\). More precisely, for the \( \ell \)-th component of the grid vector, we compute
	\[
	u_{1,\ell}=\frac{\Phi_{2,\ell}}{\Phi_{1,\ell}},\qquad
	u_{2,\ell}=\frac{\Phi_{3,\ell}}{\Phi_{1,\ell}},\qquad
	Y_\ell=\frac{\Phi_{5,\ell}^{2}}{\Phi_{1,\ell}^{2}}, \qquad
	T_\ell
	=
	\frac{
		\Phi_{4,\ell}^{2}
		-
		q(\gamma-1)\Phi_{5,\ell}^{2}
	}{
		\Phi_{1,\ell}^{2}
	}.
	\]

	\subsection{Energy stability}
	
	Next we show the energy stability of the semi-discrete scheme \eqref{eq:ns_semidiscrete} for the reactive Navier--Stokes equations. With the notation $B_x, B_y, H_x, H_y$ in \eqref{eq:HB}, which corresponds to the matrices in the SBP operators $D_x$ and $D_y$, we introduce
	\[
	\mathcal{H} = H_x\otimes H_y,\qquad
	\mathcal{B}_x = B_x\otimes H_y,\qquad
	\mathcal{B}_y = H_x\otimes B_y.
	\]
	Then \({\mathbf{H}} = I_5\otimes\mathcal{H}\),
	\(\mathbf{B}_x = I_5\otimes\mathcal{B}_x\), and
	\(\mathbf{B}_y = I_5\otimes\mathcal{B}_y\).
	Our stability result for the scheme \eqref{eq:ns_semidiscrete} reads as follows.
	
	\begin{thm}
		\label{thm:ns_energy_stability}
		For the semi-discrete scheme \eqref{eq:ns_semidiscrete} with periodic boundary conditions, suppose that the conditions in Theorem~\ref{thm:euler_energy_stability} hold, \ie, the first-order derivative operators \(\mathbf{D}_{x}\) and \(\mathbf{D}_{y}\)
		satisfy the SBP identities \eqref{eq:sbp_vector_x1} and \eqref{eq:sbp_vector_x2}, respectively,
		and the operator \(\mathbf{D}_{\mathrm{diss}}\) admits the dissipation property \eqref{eq:Ddiss_cond}.
		Furthermore, for the discretization of the diffusion term in \eqref{eq:LambdaS}, we assume that the corresponding 1D derivative operators $D_x, D_y$ in \eqref{eq:mathcal_DxDy} are both SBP operators.
		Then the scheme \eqref{eq:ns_semidiscrete} is energy stable in the sense that
		\begin{equation}
			\label{eq:ns_energy_diss_general}
			\frac{\mathrm{d}}{\mathrm{d}t}
			\|\vec{\Phi}\|_{\mathbf{P}{\mathbf{H}}}^2
			\le 0.
		\end{equation}
	\end{thm}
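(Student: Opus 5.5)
The plan is to repeat the energy argument of Theorem~\ref{thm:euler_energy_stability} and then show that the new diffusion contribution is a discrete analogue of the divergence form \eqref{eq:Phi_PLS}. Under periodic boundary conditions such a term sums to zero by the SBP identities.

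\textbf{Step 1 (reuse the Euler argument).} Multiply \eqref{eq:ns_semidiscrete} from the left by $2\vec{\Phi}^T\mathbf{P}\mathbf{H}$. Every term except the new one is treated exactly as in the proof of Theorem~\ref{thm:euler_energy_stability}. The convective terms cancel through \eqref{eq:euler_compatibility} and \eqref{eq:sbp_vector_x1}--\eqref{eq:sbp_vector_x2}, since $\mathbf{B}_x=\mathbf{B}_y=0$ in the periodic case. The term $-2\vec\Phi^T\mathbf{P}\mathbf{H}\mathbf{C}\vec\Phi$ is nonpositive. The dissipation term is nonpositive because $\mathbf{P}^{1/2}$ commutes with $\mathbf{H}$ and $\mathbf{D}_{\rm diss}$, together with \eqref{eq:Ddiss_cond}. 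This leaves
\[
\frac{\mathrm d}{\mathrm dt}\|\vec\Phi\|_{\mathbf P\mathbf H}^2\le 2\vec\Phi^T\mathbf{P}\mathbf{H}\mathbf{\Lambda}\vec S,
\]
so it suffices to prove that the right-hand side vanishes.

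\textbf{Step 2 (pointwise algebra).} All of $\mathbf P$, $\mathbf H$ and $\mathbf\Lambda$ are diagonal, and $\mathbf H=I_5\otimes\mathcal H$. The weight on each component is therefore the product of the corresponding entries of $\vec\Phi$, $P$ and $\Lambda$, exactly as in \eqref{eq:Phi_PLS}:
\begin{itemize}
\item the first block of $\vec S$ is zero;
\item in blocks 2--3, $\frac{\gamma-1}{2}\vec\Phi_{k+1}\circ\operatorname{diag}(\vec\Phi_1)^{-1}=\frac{\gamma-1}{2}\vec u_k$;
\item in block 4, the factor $\operatorname{diag}(\vec\Phi_4)^{-1}$ cancels against $\vec\Phi_4$;
\item in block 5, $\operatorname{diag}(\vec\Phi_5)^{-1}$ cancels against $\vec\Phi_5$ with weight $1$.
\end{itemize}
Writing $\mathbf 1$ for the vector of ones in $\mathbb R^{N_xN_y}$, this gives
\[
2\vec\Phi^T\mathbf P\mathbf H\mathbf\Lambda\vec S
=(\gamma-1)\sum_{k=1}^2\vec u_k^T\mathcal H\bigl(\mathcal D_x\vec\tau_{k1}+\mathcal D_y\vec\tau_{k2}\bigr)
+(\gamma-1)\,\mathbf 1^T\mathcal H\Bigl[\mathcal D_x(\cdot)+\mathcal D_y(\cdot)+\vec\Psi\Bigr]
+\mathbf 1^T\mathcal H\bigl[\mathcal D_x(\sigma\mathcal D_x\vec Y)+\mathcal D_y(\sigma\mathcal D_y\vec Y)\bigr],
\]
where the bracketed arguments $(\cdot)$ are the heat-flux terms $\kappa\mathcal D_x\vec T+q\sigma\mathcal D_x\vec Y$ and $\kappa\mathcal D_y\vec T+q\sigma\mathcal D_y\vec Y$.

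\textbf{Step 3 (discrete integration by parts).} Apply the scalar SBP identities $\vec a^T\mathcal H\mathcal D_x\vec b=-(\mathcal D_x\vec a)^T\mathcal H\vec b$, and likewise for $y$. These follow from \eqref{eq:sbp_def0} exactly as in \eqref{eq:sbp_vector_x1} with the $I_5$ factor removed. Consistency of $D$ gives $D\mathbf 1=0$, so every term of the form $\mathbf 1^T\mathcal H\mathcal D_{x}\vec v$ or $\mathbf 1^T\mathcal H\mathcal D_{y}\vec v$ vanishes. This removes the heat-conduction and species-diffusion contributions, which mirrors the divergence form of \eqref{eq:Phi_PLS}. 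The stress terms become
\[
-(\gamma-1)\sum_{k=1}^2\Bigl[(\mathcal D_x\vec u_k)^T\mathcal H\vec\tau_{k1}+(\mathcal D_y\vec u_k)^T\mathcal H\vec\tau_{k2}\Bigr],
\]
and these must cancel against $(\gamma-1)\mathbf 1^T\mathcal H\vec\Psi$.

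\textbf{Main obstacle.} The delicate point is this last cancellation. It requires that the discrete viscous work $\vec\Psi$ be the exact discrete counterpart of $\sum_{i,j}\tau_{ij}(u_j)_{x_i}$. Concretely, $\vec\Psi=\sum_{k}\bigl(\vec\tau_{k1}\circ\mathcal D_x\vec u_k+\vec\tau_{k2}\circ\mathcal D_y\vec u_k\bigr)$ must be built with the same SBP operators $\mathcal D_x,\mathcal D_y$ that appear in the momentum diffusion. Symmetry of $\tau$ then lets the index pairing match the momentum terms. I would verify this index bookkeeping against the definition of $\vec\Psi$, using that $\vec\tau_{ij}$ is itself formed from $\mathcal D_x\vec u,\mathcal D_y\vec u$ and is symmetric. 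With that identity in place, $2\vec\Phi^T\mathbf P\mathbf H\mathbf\Lambda\vec S=0$, and \eqref{eq:ns_energy_diss_general} follows from Step 1.
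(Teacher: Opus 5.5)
Your proposal is correct and follows the paper's proof essentially step for step. You reduce to the Euler argument, evaluate $2\vec{\Phi}^T\mathbf{P}\mathbf{H}\mathbf{\Lambda}\vec{S}$ blockwise as in \eqref{eq:ns_viscous_boundary}, and eliminate every term using the periodic SBP identity and $\mathcal{D}_x\mathbf{1}=\mathcal{D}_y\mathbf{1}=0$. The paper also needs the identification $\vec{\Psi}=\sum_k(\vec{\tau}_{k1}\circ\mathcal{D}_x\vec{u}_k+\vec{\tau}_{k2}\circ\mathcal{D}_y\vec{u}_k)$ (via symmetry of $\vec{\tau}$) that you flag as the main obstacle; since it never defines $\vec{\Psi}$, it uses this silently when it pairs $\vec{u}_k^T\mathcal{H}\mathcal{D}\vec{\tau}$ with $\vec{\tau}^T\mathcal{H}\mathcal{D}\vec{u}_k$.
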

	
	\begin{proof}
		Multiplying \eqref{eq:ns_semidiscrete} by \(2\vec{\Phi}^{T}\mathbf P{\mathbf H}\) and following the proof of Theorem~\ref{thm:euler_energy_stability}, we obtain
		\[
		\frac{\mathrm{d}}{\mathrm{d}t}\|\vec{\Phi}\|_{\mathbf{P}{\mathbf{H}}}^2
		=
		-2\vec{\Phi}^{T}\mathbf{P}{\mathbf{H}}
		\bigl(\mathbf{C}+\mathbf{D}_{\mathrm{diss}}\bigr)\vec{\Phi}
		+
		2\vec{\Phi}^{T}\mathbf{P}{\mathbf{H}}\mathbf{\Lambda}\vec{S}.
		\]
		According to Theorem~\ref{thm:euler_energy_stability}, we have $-2\vec{\Phi}^{T}\mathbf{P}{\mathbf{H}}
		\bigl(\mathbf{C}+\mathbf{D}_{\mathrm{diss}}\bigr)\vec{\Phi} \le 0$. Then we turn to show that $\vec{\Phi}^{T}\mathbf{P}{\mathbf{H}}\mathbf{\Lambda}\vec{S} \le 0$.
		
		Recall the definitions of \(\mathbf{P}\) and \(\mathbf{H}\) in \eqref{eq:P_mathbf} and \eqref{eq:HB}, respectively.
		Similar to the continuous identity \eqref{eq:Phi_PLS}, we compute
		\begin{equation}
			\label{eq:ns_viscous_boundary}
			\begin{aligned}
				2\vec{\Phi}^{T}(\mathbf{P}{\mathbf{H}}\mathbf{\Lambda}\vec{S})
				=
				&(\gamma-1)\Big[
				\vec{u}_1^{T}\mathcal{H}\mathcal{D}_{x}\vec{\tau}_{11}
				+
				\vec{\tau}_{11}^{T}\mathcal{H}\mathcal{D}_{x}\vec{u}_1
				+
				\vec{u}_2^{T}\mathcal{H}\mathcal{D}_{x}\vec{\tau}_{21}
				+
				\vec{\tau}_{21}^{T}\mathcal{H}\mathcal{D}_{x}\vec{u}_2
				\\
				&+
				\mathbf{1}^{T}\mathcal{H}\mathcal{D}_{x}
				\bigl(\kappa\mathcal{D}_{x}\vec{T}\bigr)
				+
				\Bigl(q+\frac{1}{\gamma-1}\Bigr)
				\mathbf{1}^{T}\mathcal{H}\mathcal{D}_{x}
				\bigl(\sigma\mathcal{D}_{x}\vec{Y}\bigr)
				\\
				&+
				\vec{u}_1^{T}\mathcal{H}\mathcal{D}_{y}\vec{\tau}_{12}
				+
				\vec{\tau}_{12}^{T}\mathcal{H}\mathcal{D}_{y}\vec{u}_1
				+
				\vec{u}_2^{T}\mathcal{H}\mathcal{D}_{y}\vec{\tau}_{22}
				+
				\vec{\tau}_{22}^{T}\mathcal{H}\mathcal{D}_{y}\vec{u}_2
				\\
				&+
				\mathbf{1}^{T}\mathcal{H}\mathcal{D}_{y}
				\bigl(\kappa\mathcal{D}_{y}\vec{T}\bigr)
				+
				\Bigl(q+\frac{1}{\gamma-1}\Bigr)
				\mathbf{1}^{T}\mathcal{H}\mathcal{D}_{y}
				\bigl(\sigma\mathcal{D}_{y}\vec{Y}\bigr)
				\Big],
			\end{aligned}
		\end{equation}
		where $\mathbf{1} = (1,1,...,1)^T \in \mathbb{R}^{N_x N_y}$.
		
		Using the 1D SBP identities $H_x D_x + D_x^T H_x = B_x$ and $H_y D_y + D_y^T H_y = B_y$, we have
		\[
		\mathcal H\mathcal D_x+\mathcal D_x^T\mathcal H=\mathcal B_x,
		\qquad
		\mathcal H\mathcal D_y+\mathcal D_y^T\mathcal H=\mathcal B_y.
		\]
		Then it follows that
		\[
		\begin{aligned}
			\vec{u}_1^{T}\mathcal{H}\mathcal{D}_{x}\vec{\tau}_{11}
			+
			\vec{\tau}_{11}^{T}\mathcal{H}\mathcal{D}_{x}\vec{u}_1
			& =
			\vec{u}_1^{T}
			\bigl(
			\mathcal{H}\mathcal{D}_{x}
			+
			\mathcal{D}_{x}^{T}\mathcal{H}
			\bigr)
			\vec{\tau}_{11}
			=
			\vec{u}_1^{T}\mathcal{B}_{x}\vec{\tau}_{11}, \\
			\vec{u}_2^{T}\mathcal{H}\mathcal{D}_{x}\vec{\tau}_{21}
			+
			\vec{\tau}_{21}^{T}\mathcal{H}\mathcal{D}_{x}\vec{u}_2
			& =
			\vec{u}_2^{T}
			\bigl(
			\mathcal{H}\mathcal{D}_{x}
			+
			\mathcal{D}_{x}^{T}\mathcal{H}
			\bigr)
			\vec{\tau}_{21}
			=
			\vec{u}_2^{T}\mathcal{B}_{x}\vec{\tau}_{21}, \\
			\vec{u}_1^{T}\mathcal{H}\mathcal{D}_{y}\vec{\tau}_{12}
			+
			\vec{\tau}_{12}^{T}\mathcal{H}\mathcal{D}_{y}\vec{u}_1
			& =
			\vec{u}_1^{T}
			\bigl(
			\mathcal{H}\mathcal{D}_{y}
			+
			\mathcal{D}_{y}^{T}\mathcal{H}
			\bigr)
			\vec{\tau}_{12}
			=
			\vec{u}_1^{T}\mathcal{B}_{y}\vec{\tau}_{12}, \\
			\vec{u}_2^{T}\mathcal{H}\mathcal{D}_{y}\vec{\tau}_{22}
			+
			\vec{\tau}_{22}^{T}\mathcal{H}\mathcal{D}_{y}\vec{u}_2
			& =
			\vec{u}_2^{T}
			\bigl(
			\mathcal{H}\mathcal{D}_{y}
			+
			\mathcal{D}_{y}^{T}\mathcal{H}
			\bigr)
			\vec{\tau}_{22}
			=
			\vec{u}_2^{T}\mathcal{B}_{y}\vec{\tau}_{22}.
		\end{aligned}
		\]
		On the other hand, since $D_x$ and $D_y$ are consistent derivative operators, we have \(\mathcal D_x\mathbf 1=\mathcal D_y\mathbf 1=0\) (this can also be directly verified by the definition of $D=D^- + D^+$ with $D^-,D^+$ in \eqref{eq:D+D-}), which further yields
		\[
		\begin{aligned}
			\mathbf 1^T\mathcal H\mathcal D_x
			\bigl(\kappa\mathcal D_x\vec T\bigr)
			=
			\mathbf 1^T
			\bigl(
			\mathcal H\mathcal D_x
			+
			\mathcal D_x^T\mathcal H
			\bigr)
			\bigl(\kappa\mathcal D_x\vec T\bigr)
			=
			\mathbf 1^T\mathcal B_x
			\bigl(\kappa\mathcal D_x\vec T\bigr).
		\end{aligned}
		\]
		Similarly,
		\begin{equation*}
			\begin{split}
				\mathbf{1}^{T}\mathcal{H}\mathcal{D}_{x}
				\bigl(\sigma\mathcal{D}_{x}\vec{Y}\bigr)
				& = \mathbf{1}^{T}\mathcal{B}_{x}
				\bigl(\sigma\mathcal{D}_{x}\vec{Y}\bigr) ,\\
				\mathbf{1}^{T}\mathcal{H}\mathcal{D}_{y}
				\bigl(\kappa\mathcal{D}_{y}\vec{T}\bigr)
				& = \mathbf{1}^{T}\mathcal{B}_{y}
				\bigl(\kappa\mathcal{D}_{y}\vec{T}\bigr), \\
				\mathbf{1}^{T}\mathcal{H}\mathcal{D}_{y}
				\bigl(\sigma\mathcal{D}_{y}\vec{Y}\bigr)
				& = \mathbf{1}^{T}\mathcal{B}_{y}
				\bigl(\sigma\mathcal{D}_{y}\vec{Y}\bigr).
			\end{split}
		\end{equation*}
		Substituting the above identities into \eqref{eq:ns_viscous_boundary} gives
		\begin{equation}
			\begin{aligned}
				2\vec{\Phi}^{T}(\mathbf{P}{\mathbf{H}}\mathbf{\Lambda}\vec{S})
				=&
				(\gamma-1)\Big[
				\vec{u}_1^{T}\mathcal{B}_{x}\vec{\tau}_{11}
				+
				\vec{u}_2^{T}\mathcal{B}_{x}\vec{\tau}_{21}
				+
				\mathbf{1}^{T}\mathcal{B}_{x}
				\bigl(\kappa\mathcal{D}_{x}\vec{T}\bigr)\\
				&+
				\Bigl(q+\frac{1}{\gamma-1}\Bigr)
				\mathbf{1}^{T}\mathcal{B}_{x}
				\bigl(\sigma\mathcal{D}_{x}\vec{Y}\bigr)\\
				&+
				\vec{u}_1^{T}\mathcal{B}_{y}\vec{\tau}_{12}
				+
				\vec{u}_2^{T}\mathcal{B}_{y}\vec{\tau}_{22}
				+
				\mathbf{1}^{T}\mathcal{B}_{y}
				\bigl(\kappa\mathcal{D}_{y}\vec{T}\bigr)\\
				&+
				\Bigl(q+\frac{1}{\gamma-1}\Bigr)
				\mathbf{1}^{T}\mathcal{B}_{y}
				\bigl(\sigma\mathcal{D}_{y}\vec{Y}\bigr)
				\Big],
			\end{aligned}
		\end{equation}
		which is an analogue of \eqref{eq:Phi_PLS}.
		Note that under periodic boundary conditions, \(\mathcal B_x=0 \) and \(\mathcal B_y=0\). Thus
		$
		2\vec{\Phi}^{T}\mathbf{P}{\mathbf{H}}\mathbf{\Lambda}\vec{S}=0
		$
		and \eqref{eq:ns_energy_diss_general} holds.
	\end{proof}

	\section{Numerical experiments}
	\label{sec:ne}
	\setcounter{equation}{0}
	In this section, we demonstrate the proposed schemes by several one- and two-dimensional examples. These include smooth periodic problems, a classical shock tube, double rarefactions, shock--contact interactions, periodic vortex transport, and 2D detonation-type configurations; related benchmark setups can be found in \cite{ZhangShu2010PositivityDG,Sod1978Survey,ShuOsher1989ENO,LindeRoe1997RobustEuler,SchulzRinneCollinsGlaz1993Riemann2D,LiskaWendroff2003Comparison,YeeSandhamDjomehri1999Filters,WangZhangShuNing2012DetonationDG,Zhang2017PositivityDGNS}. The spatial discretization is given by our energy stable schemes, and the time integration is performed by the third-order strong stability preserving Runge--Kutta method \cite{GottliebShu1998SSP}, since our schemes are third-order in space.
	The chemical reaction term is taken as the usual Arrhenius form
	\[
	\omega
	=
	-K\rho Y\exp\left(-\frac{T_a}{T}\right),
	\]
	where \(Y\in[0,1]\) is the reactant mass fraction, \(K>0\) is the rate coefficient, \(T_a>0\) is the activation temperature, \(T=\frac{p}{\rho}\) is the nondimensional temperature, and \(q\) denotes the heat release in the total energy.
	
	For the computation of the reactive Navier-Stokes equations, we set \(\mu=1/\mathrm{Re}\) with $\mathrm{Re}$ the Reynolds number and use Stokes' hypothesis \(\lambda=-2\mu/3\).
	The time step for 1D problems is chosen according to the convective CFL condition
	\[
	\delta_t
	=
	\mathrm{CFL}\frac{\delta_x}{\alpha},
	\qquad
	\alpha=\max_i(|u_i|+c_i),
	\qquad
	c_i=\sqrt{\frac{\gamma p_i}{\rho_i}}.
	\]
	%
	In 2D case, the CFL number is defined by
	\[
	\delta_t
	=
	\frac{\mathrm{CFL}}{\alpha_x/\delta_x+\alpha_y/\delta_y},
	\qquad
	\alpha_x=\max_{i,j}(|u_{i,j}|+c_{i,j}),
	\qquad
	\alpha_y=\max_{i,j}(|v_{i,j}|+c_{i,j}),
	\]
	where \(u=u_1\), \(v=u_2\), and
	$
	c_{i,j}=\sqrt{\frac{\gamma p_{i,j}}{\rho_{i,j}}}.
	$

	\subsection{1D examples}
	
	\begin{example}\label{ex:1d_smooth}
		We first verify the accuracy of the proposed method by a 1D smooth problem adapted from \cite{ZhangShu2010PositivityDG}. The initial data are
		\[
		\rho(x,0) = 1 - 0.99 \sin(x), \quad
		u(x,0) = 1, \quad
		p(x,0) = 0.01, \quad
		Y(x,0) = 0.5+0.49\sin(x).
		\]
		The parameters are \(\gamma=1.4\), \(q=50\), \(K=145.68913\), and \(T_a=50\). The computational domain is $[0,2\pi]$, and periodic boundary conditions are imposed. Since no analytical solution is available for this problem, the numerical solution computed on the refined mesh with $\delta_x=\pi/2048$ is used as the reference solution. The errors at $t=0.02$ are listed in Table~\ref{tab:error1d}, where the expected convergence behavior is observed.
		Figure~\ref{fig:energy_periodic_1d} shows that the discrete energy decreases monotonically, in agreement with the semi-discrete energy estimate proved in Section~\ref{sec:es}.
	\end{example}
	
	\begin{table}[htbp]
		\centering
		\begin{threeparttable}
			\caption{\textbf{Example~\ref{ex:1d_smooth}:} Error table for the 1D smooth problem.}
			\label{tab:error1d}
			\small
			\setlength{\tabcolsep}{7pt}
			\begin{tabular}{c cc cc cc}
				\toprule
				$\delta_x$ & $L^1$ error & Order & $L^2$ error & Order & $L^\infty$ error & Order 	\\
				\midrule
				$\pi/64$  & 8.31e-06 & -    & 9.65e-06 & -    & 2.36e-05 & -   \\
				$\pi/128$ & 1.56e-06 & 2.41 & 2.10e-06 & 2.20 & 6.34e-06 & 1.90 \\
				$\pi/256$ & 1.63e-07 & 3.25 & 2.56e-07 & 3.04 & 1.07e-06 & 2.55 \\
				$\pi/512$ & 1.47e-08 & 3.47 & 2.12e-08 & 3.59 & 9.42e-08 & 3.52 \\
				\bottomrule
			\end{tabular}
		\end{threeparttable}
	\end{table}
	\begin{figure}[H]
		\centering
		\includegraphics[width=0.6\textwidth]{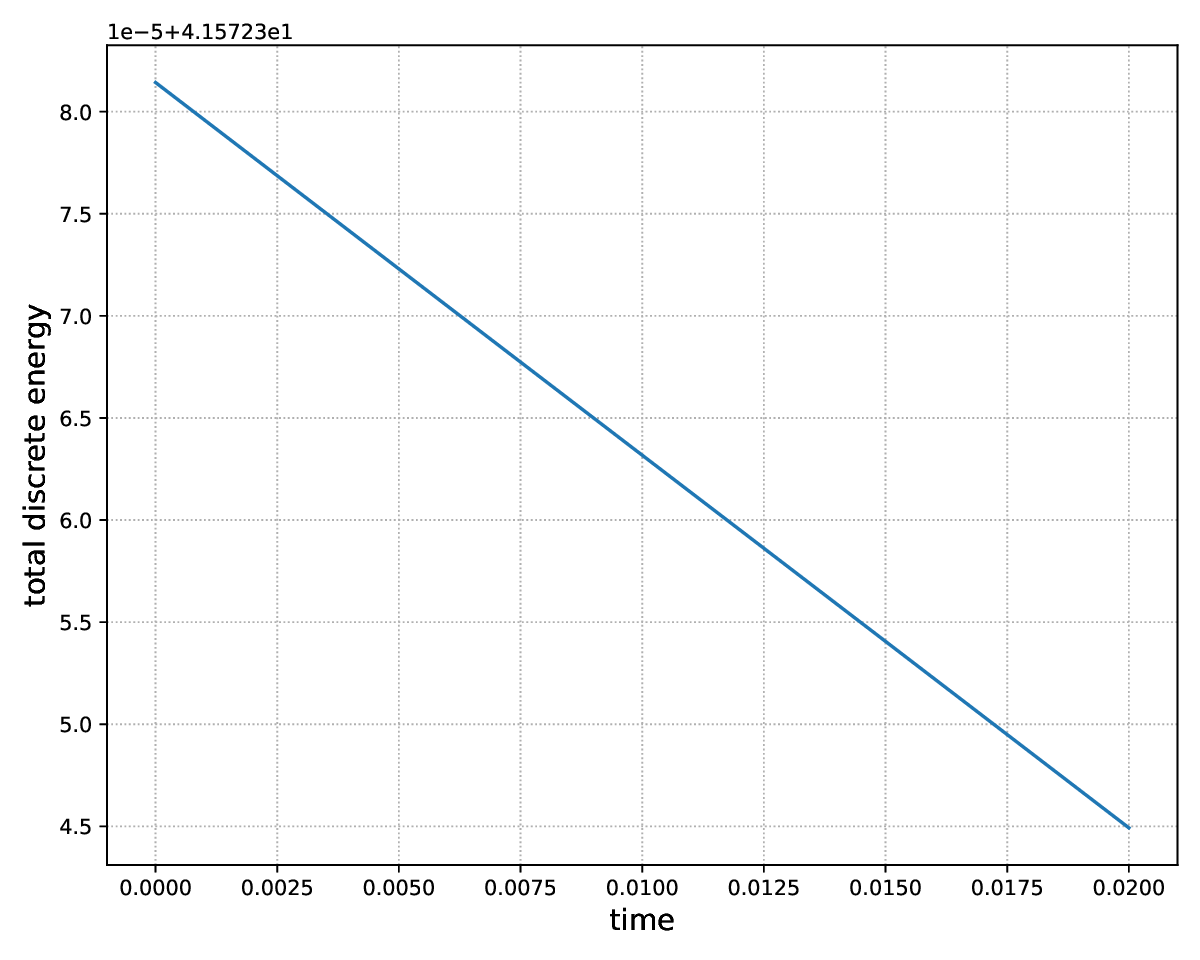}
		\caption{Example~\ref{ex:1d_smooth}: Discrete energy history for the 1D periodic problem.}
		\label{fig:energy_periodic_1d}
	\end{figure}
	
	\begin{example}\label{ex:sod}
		Next we solve the Sod shock tube problem for the Euler equations, \ie, $\omega=0$. The initial data are taken from the original test of Sod \cite{Sod1978Survey}:
		\[
		(\rho, u, p, Y) =
		\begin{cases}
			(1, 0, 1, 0), & x < 0, \\
			(0.125, 0, 0.1, 0), & x \ge 0.
		\end{cases}
		\]
		The computational domain is $[-1,1]$, the final time is $t=0.1$, and \(\gamma=1.4\). Since the waves do not reach the boundaries before the final time, the boundary values are kept as their initial states. We set \(\mathrm{CFL}=0.8\) and $\delta_x=1/200$. To illustrate the role of the artificial dissipation, we compute two solutions, one with ${D}_{\mathrm{diss}}$ and one without it. Figure~\ref{fig:sod_results} compares both results with the exact solution. Without artificial dissipation, visible oscillations appear near the discontinuities, especially around the contact discontinuity and the shock. With the dissipation operator, these oscillations are effectively suppressed, while the rarefaction wave, contact discontinuity, and shock are still sharply resolved.
	\end{example}

	\begin{figure}[htbp]
		\centering
		\begin{subfigure}[b]{0.49\linewidth}
			\centering
			\includegraphics[width=\linewidth]{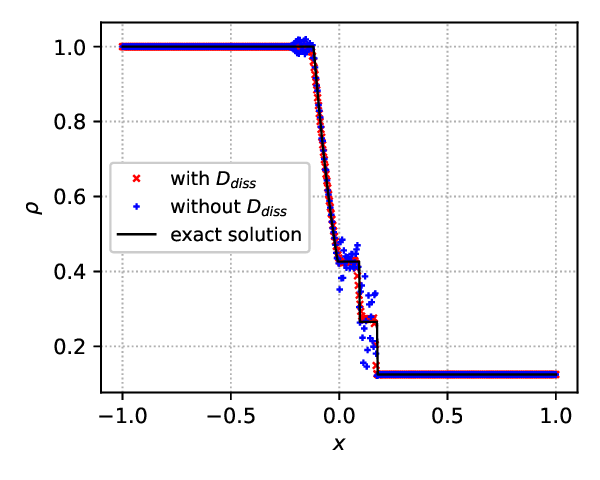}
			\caption{Density}
			\label{fig:sod_rho}
		\end{subfigure}
		\hfill
		\begin{subfigure}[b]{0.49\linewidth}
			\centering
			\includegraphics[width=\linewidth]{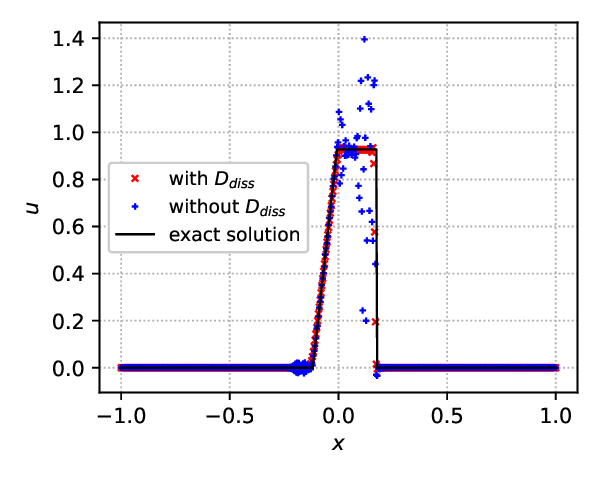}
			\caption{Velocity}
			\label{fig:sod_u}
		\end{subfigure}
		
		\vspace{0.35cm}
		
		\begin{subfigure}[b]{0.49\linewidth}
			\centering
			\includegraphics[width=\linewidth]{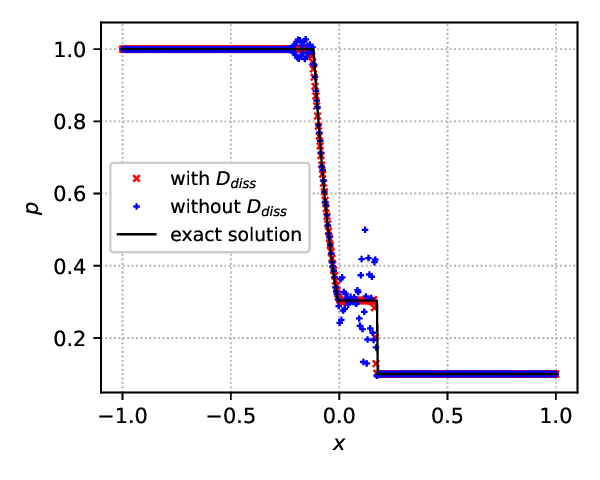}
			\caption{Pressure}
			\label{fig:sod_p}
		\end{subfigure}
		
		\caption{Example~\ref{ex:sod}: Numerical results of the Sod shock tube problem at $t=0.1$ with $\delta_x=\frac{1}{200}$.}
		\label{fig:sod_results}
	\end{figure}

	\begin{example}\label{ex:rarefaction}
		We also consider the double-rarefaction problem with a central vacuum/near-vacuum state \cite{LindeRoe1997RobustEuler} for the non-reactive Euler and Navier--Stokes equations ($\omega=0$). This test is used to examine robustness in the presence of very low density and pressure. The initial data are
		\[
		(\rho, u, p, Y) =
		\begin{cases}
			(7, -1, 0.2, 0), & x < 0, \\
			(7, 1, 0.2, 0), & x \ge 0,
		\end{cases}
		\]
		and we set \(\gamma=1.4\). The computational domain is $[-1,1]$, the final time is $t=0.6$, and \(\mathrm{CFL}=0.8\). Since the waves remain away from the boundaries, the boundary values are fixed at their initial states. For the viscous term, we take $\mathrm{Re}=1000$ as in \cite{Zhang2017PositivityDGNS}. Additionally, we set \(\Pr=0.72\), \(\mu=1/\mathrm{Re}\), \(\lambda=-2\mu/3\), \(\kappa=\mu c_p/\Pr\), \(c_p=\gamma/(\gamma-1)\), and \(\sigma=0\). Figure~\ref{fig:rarefaction_results} shows that the Euler computation agrees well with the exact Euler Riemann solution. The Navier--Stokes computation is compared with the same Euler exact solution only as an inviscid reference; the reference is not an exact solution of the viscous problem. No evident spurious oscillations are observed near the low-density region.
	\end{example}
	
	\begin{figure}[htbp]
		\centering
		\begin{subfigure}[b]{0.49\textwidth}
			\centering
			\includegraphics[width=\textwidth]{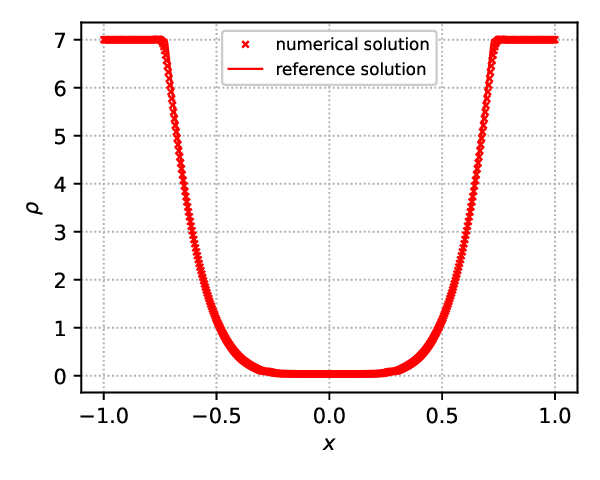}
			\caption{Density (Euler)}
		\end{subfigure}
		\hfill
		\begin{subfigure}[b]{0.49\textwidth}
			\centering
			\includegraphics[width=\textwidth]{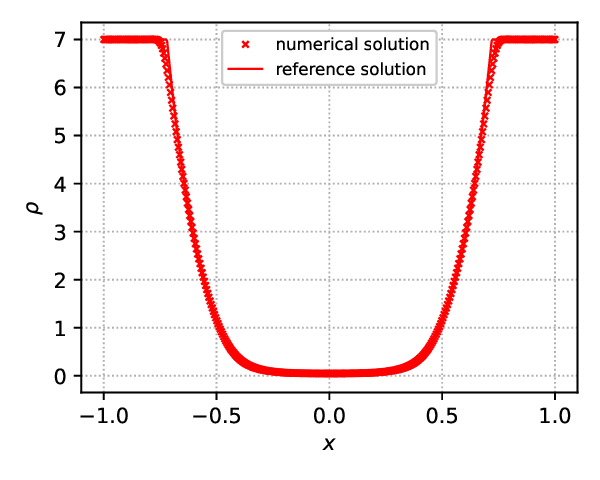}
			\caption{Density (Navier-Stokes)}
		\end{subfigure}
		
		\vspace{0.3cm}
		
		\begin{subfigure}[b]{0.49\textwidth}
			\centering
			\includegraphics[width=\textwidth]{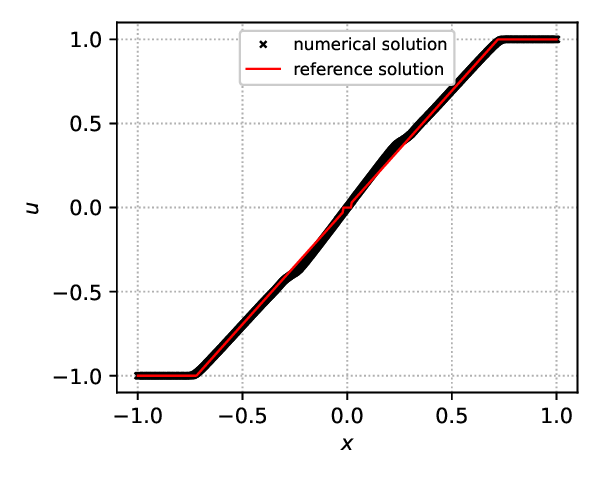}
			\caption{Velocity (Euler)}
		\end{subfigure}
		\hfill
		\begin{subfigure}[b]{0.49\textwidth}
			\centering
			\includegraphics[width=\textwidth]{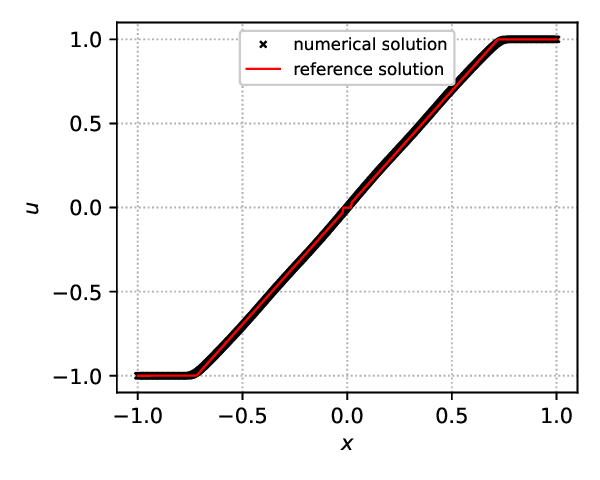}
			\caption{Velocity (Navier-Stokes)}
		\end{subfigure}
		
		\vspace{0.3cm}
		
		\begin{subfigure}[b]{0.49\textwidth}
			\centering
			\includegraphics[width=\textwidth]{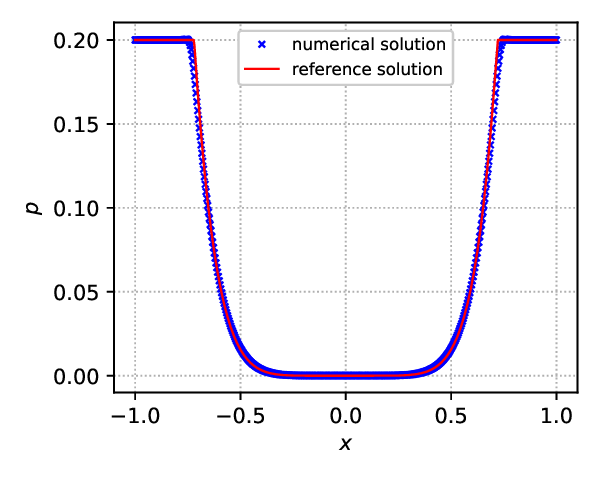}
			\caption{Pressure (Euler)}
		\end{subfigure}
		\hfill
		\begin{subfigure}[b]{0.49\textwidth}
			\centering
			\includegraphics[width=\textwidth]{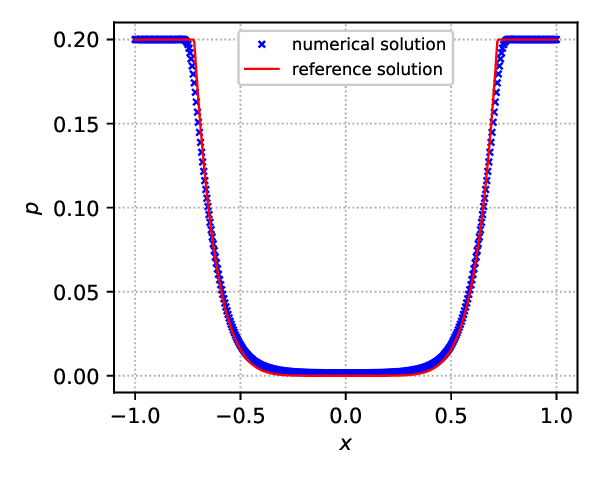}
			\caption{Pressure (Navier-Stokes)}
		\end{subfigure}
		
		\caption{Example~\ref{ex:rarefaction}: Numerical results of the double rarefaction problem at $t=0.6$ with $\delta_x=\frac{1}{200}$. Left: solutions of the Euler equations, Right: solutions of the Navier-Stokes equations with $\mathrm{Re}=1000$.}
		\label{fig:rarefaction_results}
	\end{figure}

	\subsection{2D examples}
	In the following we simulate 2D problems and write $u=u_1$ and $v=u_2$ for convenience.
	
	\begin{example}\label{ex:2d_smooth}
		We first verify the accuracy of the method by a 2D smooth problem adapted from \cite{ZhangShu2010PositivityDG}. The exact solution is
		\[
		\rho(x,y,t) = 1 - 0.99 \sin(x + y - 2t), \quad
		u = 1, \quad
		v = 1, \quad
		p = 0.01, \quad
		Y = 0
		\]
		and the spatial domain is $[0,2\pi]\times[0,2\pi]$. Periodic boundary conditions are imposed in both directions. 
		Table~\ref{tab:error_new_data} gives the errors at \(t=0.02\), which verify the designed order of convergence. 
		The monotone decay of the discrete energy is shown in Figure~\ref{fig:energy_periodic_2d}, demonstrating the energy stability of our scheme.
	\end{example}
	
	\begin{table}[H]
		\centering
		\begin{threeparttable}
			\caption{\textbf{Example~\ref{ex:2d_smooth}:} Error table for the 2D smooth problem.}
			\label{tab:error_new_data}
			\small
			\setlength{\tabcolsep}{7pt}
			\begin{tabular}{c cc cc cc}
				\toprule
				$\delta_x=\delta_y$ & $L^1$ error & Order & $L^2$ error & Order & $L^\infty$ error & 	Order \\
				\midrule
				$\pi/32$   & 6.00e-04 & -    & 2.67e-04 & -    & 2.34e-04 & -    \\
				$\pi/64$   & 8.34e-05 & 2.85 & 4.20e-05 & 2.67 & 4.81e-05 & 2.28 \\
				$\pi/128$  & 1.05e-05 & 2.99 & 5.65e-06 & 2.89 & 7.08e-06 & 2.76 \\
				$\pi/256$  & 1.33e-06 & 2.98 & 7.20e-07 & 2.97 & 9.18e-07 & 2.95 \\
				$\pi/512$  & 1.66e-07 & 3.00 & 9.03e-08 & 2.99 & 1.16e-07 & 2.98 \\
				\bottomrule
			\end{tabular}
		\end{threeparttable}
	\end{table}
	
	\begin{figure}[H]
		\centering
		\includegraphics[width=0.6\textwidth]{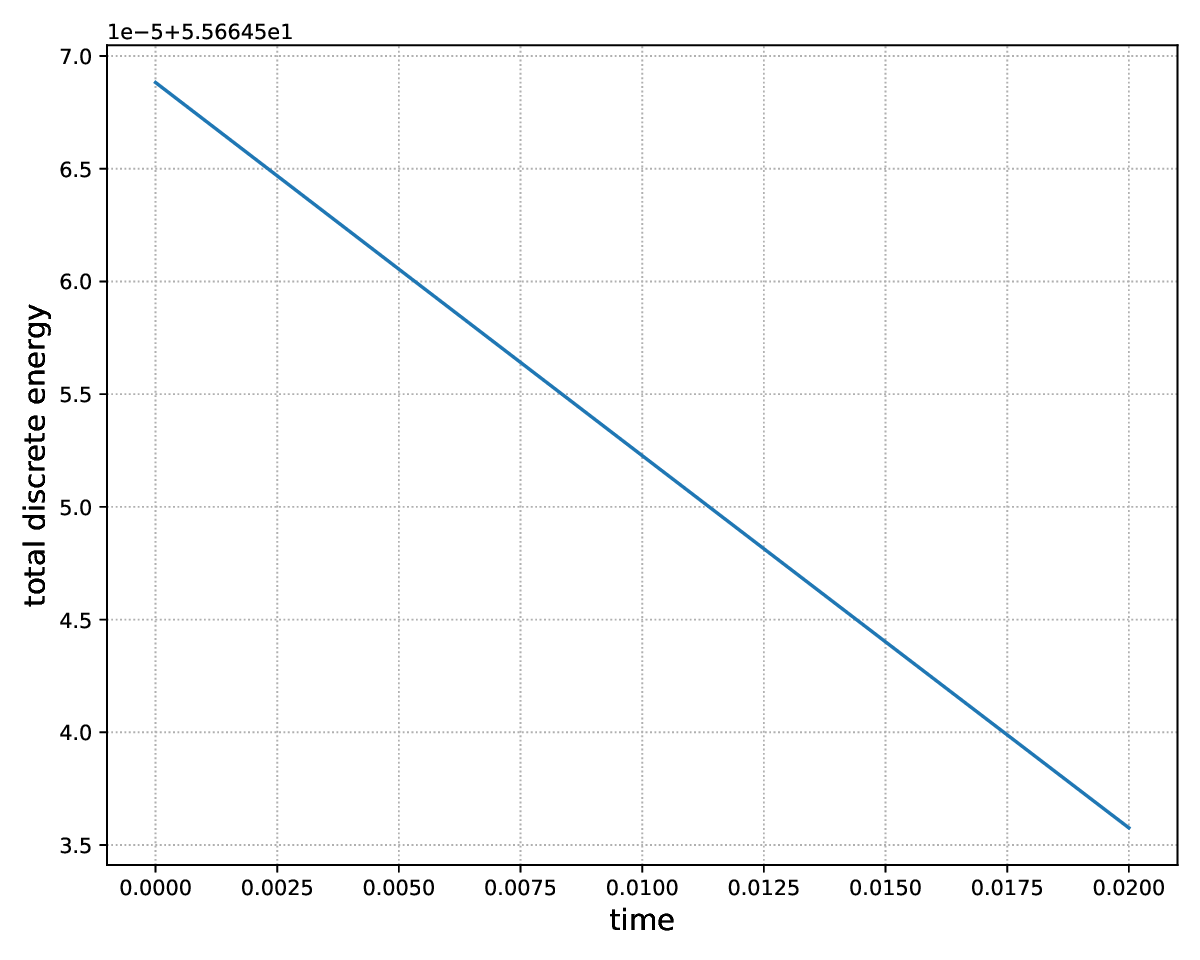}
		\caption{Example~\ref{ex:2d_smooth}: Discrete energy history for the 2D periodic problem.}
		\label{fig:energy_periodic_2d}
	\end{figure}
	
	\begin{example}\label{ex:riemann_case6}
		We next consider a 2D Riemann problem introduced in \cite{SchulzRinneCollinsGlaz1993Riemann2D}.
This is a non-reactive test, and we set $Y=0$, $\omega=0$, and $q=K=T_a=\sigma=0$. The square domain $[0,1]\times[0,1]$ is divided into four quadrants by $x=1/2$ and $y=1/2$, and the initial data are given by
		\[
		(\rho,u,v,p,Y)=
		\begin{cases}
			(2,\ 0.75,\ 0.5,\ 1,\ 0), & x<1/2,\ y\geq 1/2,\\
			(1,\ 0.75,\!-0.5,\ 1,\ 0), & x\geq 1/2,\ y\geq 1/2,\\
			(1,\!-0.75,\ 0.5,\ 1,\ 0), & x<1/2,\ y<1/2,\\
			(3,\!-0.75,\!-0.5,\ 1,\ 0), & x\geq 1/2,\ y<1/2.
		\end{cases}
		\]
We take $\gamma=1.4$, impose transmissive boundary conditions, and use $800\times800$ grid points, corresponding to $\delta_x=\delta_y=1/800$. The numerical solution is advanced to $t=0.3$ with $\mathrm{CFL}=0.8$. For the computation of the Navier--Stokes equations, we take $\mathrm{Re}=1000$, $\Pr=0.72$, $\mu=1/\mathrm{Re}$, $\lambda=-2\mu/3$, $\kappa=\mu c_p/\Pr$, and $c_p=\gamma/(\gamma-1)$.

		Figure~\ref{fig:riemann_case6_results} shows the pressure field by color, with density contours and velocity arrows superimposed. The four slip lines and their central interaction are clearly visible in both results. The results for the Euler equations are in qualitative agreement with those reported in \cite{LiskaWendroff2003Comparison}. The results of the Navier--Stokes equations retain the same overall wave pattern, while viscous and thermal diffusion broaden the contact layers. 
	\end{example}
	
	\begin{figure}[htbp]
		\centering
		\begin{subfigure}[b]{0.49\textwidth}
			\centering
			\includegraphics[width=\textwidth]{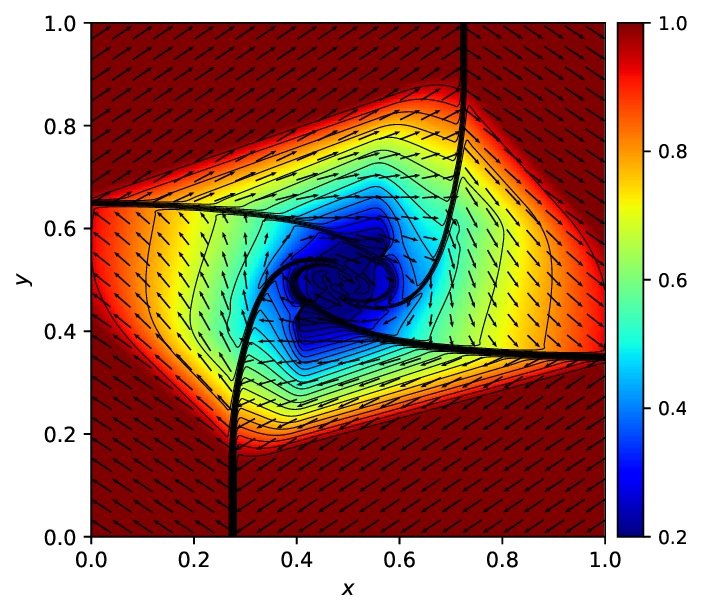}
			\caption{Euler}
		\end{subfigure}
		\hfill
		\begin{subfigure}[b]{0.49\textwidth}
			\centering
			\includegraphics[width=\textwidth]{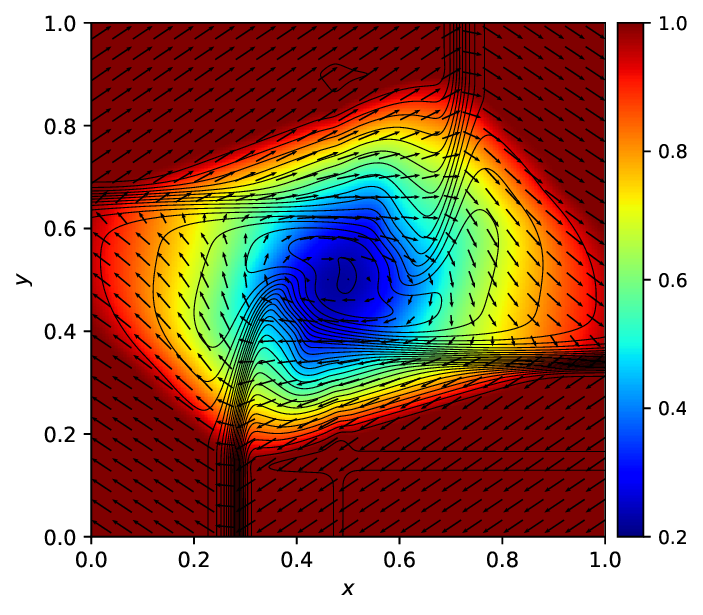}
			\caption{Navier--Stokes}
		\end{subfigure}
		\caption{Example~\ref{ex:riemann_case6}: Results of 2D Riemann problem at $t=0.3$. Pressure is displayed by color, density by 29 contours from $0.25$ to $3.05$ with an increment of $0.1$, and velocity by arrows. }
		\label{fig:riemann_case6_results}
	\end{figure}
	
	\clearpage
	\begin{example}\label{ex:yee_vortex}
		In this example, we simulate the isentropic vortex problem from \cite{YeeSandhamDjomehri1999Filters}, which has an exact solution for the Euler equations. Let $(\xi,\eta)=(x-u_\infty t, y-v_\infty t)$ be the displacement from the vortex center $(u_\infty t,v_\infty t)$ and denote $r^2=\xi^2+\eta^2$. The exact primitive variables are
		\[
		\begin{aligned}
			\theta&=1-\frac{(\gamma-1)\beta^2}{8\gamma\pi^2}\exp(1-r^2),\\
			\rho&=\rho_\infty\theta^{1/(\gamma-1)},\qquad
			p=p_\infty\left(\rho/\rho_\infty\right)^\gamma,\\
			u&=u_\infty-\frac{\beta}{2\pi}\eta\exp\left(\frac{1-r^2}{2}\right),\qquad
			v=v_\infty+\frac{\beta}{2\pi}\xi\exp\left(\frac{1-r^2}{2}\right).
		\end{aligned}
		\]
		As in \cite{YeeSandhamDjomehri1999Filters}, we set $\gamma=1.4$, $\beta=5$, $(\rho_\infty,u_\infty,v_\infty,p_\infty)=(1,1,1,1)$, and $Y=\omega=0$. The problem is solved on $[-5,5]^2$ to the final time $t=10$, with periodic boundary conditions. We use a $256\times256$ grid ($\delta_x=\delta_y=10/256$) and take $\mathrm{CFL}=0.8$. 
		For the computation of the Navier--Stokes equations, we use $\mathrm{Re}=500$ and $5000$, $\Pr=0.72$, $\mu=1/\mathrm{Re}$, $\lambda=-2\mu/3$, $\kappa=\mu c_p/\Pr$, $c_p=\gamma/(\gamma-1)$, and $\sigma=0$. Figures~\ref{fig:yee_vortex_fields} and~\ref{fig:yee_vortex_sections} show that the Euler solution preserves the transported vortex structure and that its density and pressure sections closely follow the exact solution. Viscous diffusion smooths the Navier--Stokes profiles, while increasing the Reynolds number from $500$ to $5000$ brings them closer to the exact Euler solution, indicating the expected approach to the inviscid limit. The exact Euler solution is used only as the inviscid reference, not as an exact viscous solution.
	\end{example}
	
	\begin{figure}[htbp]
		\centering
		\begin{subfigure}[b]{0.32\textwidth}
			\centering
			\includegraphics[width=\textwidth]{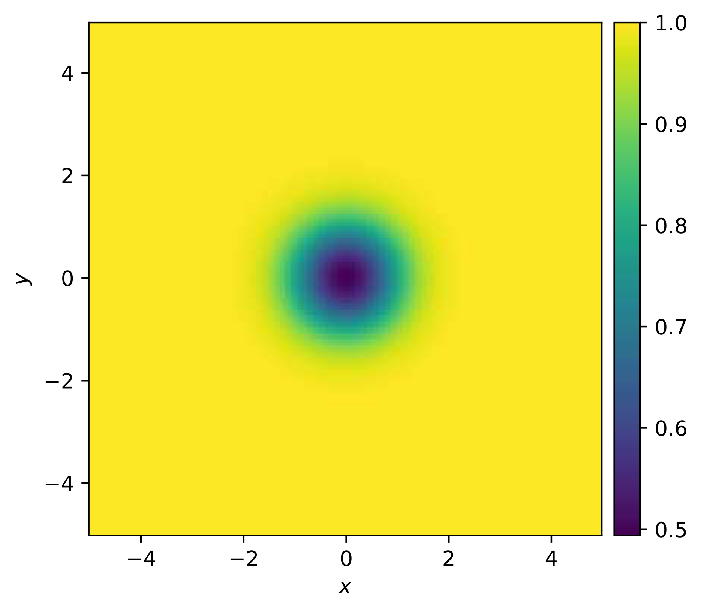}
			\caption{Density (Euler)}
		\end{subfigure}
		\hfill
		\begin{subfigure}[b]{0.32\textwidth}
			\centering
			\includegraphics[width=\textwidth]{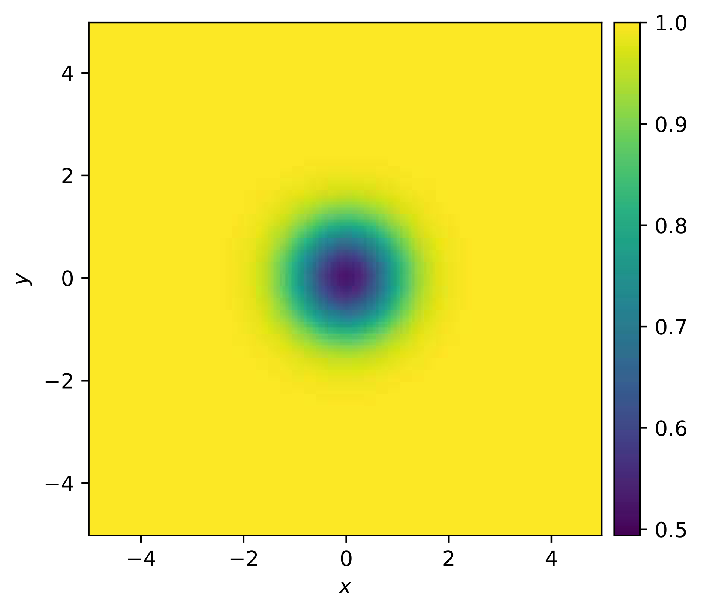}
			\caption{Density (NS, $\mathrm{Re}=500$)}
		\end{subfigure}
		\hfill
		\begin{subfigure}[b]{0.32\textwidth}
			\centering
			\includegraphics[width=\textwidth]{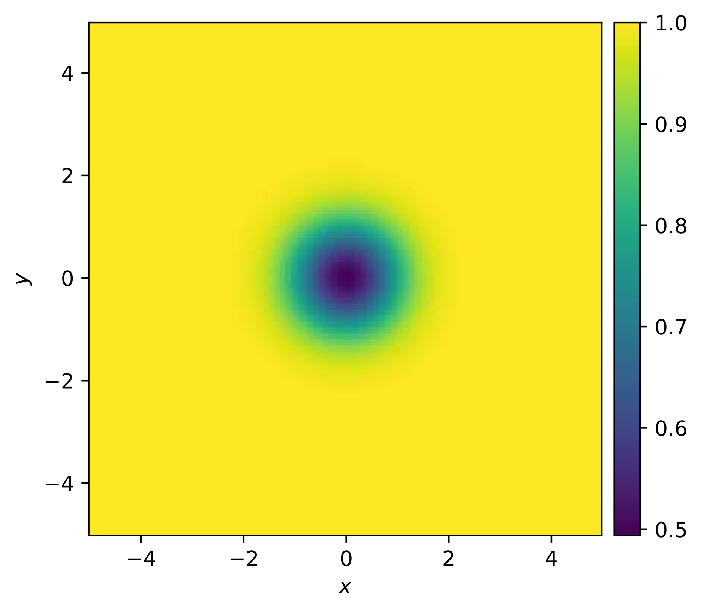}
			\caption{Density (NS, $\mathrm{Re}=5000$)}
		\end{subfigure}
		
		\vspace{0.3cm}
		
		\begin{subfigure}[b]{0.32\textwidth}
			\centering
			\includegraphics[width=\textwidth]{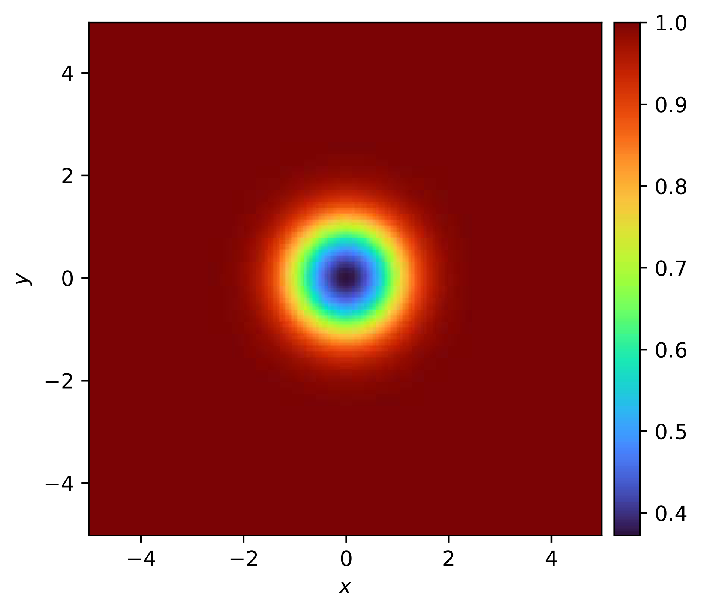}
			\caption{Pressure (Euler)}
		\end{subfigure}
		\hfill
		\begin{subfigure}[b]{0.32\textwidth}
			\centering
			\includegraphics[width=\textwidth]{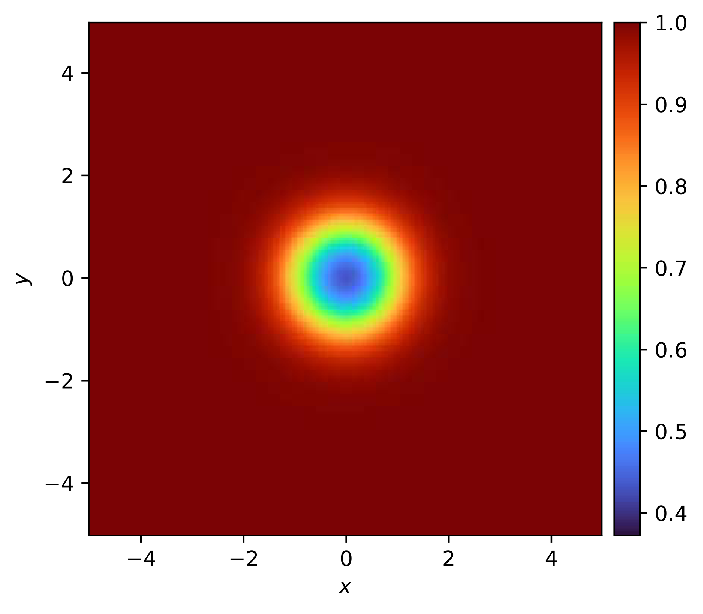}
			\caption{Pressure (NS, $\mathrm{Re}=500$)}
		\end{subfigure}
		\hfill
		\begin{subfigure}[b]{0.32\textwidth}
			\centering
			\includegraphics[width=\textwidth]{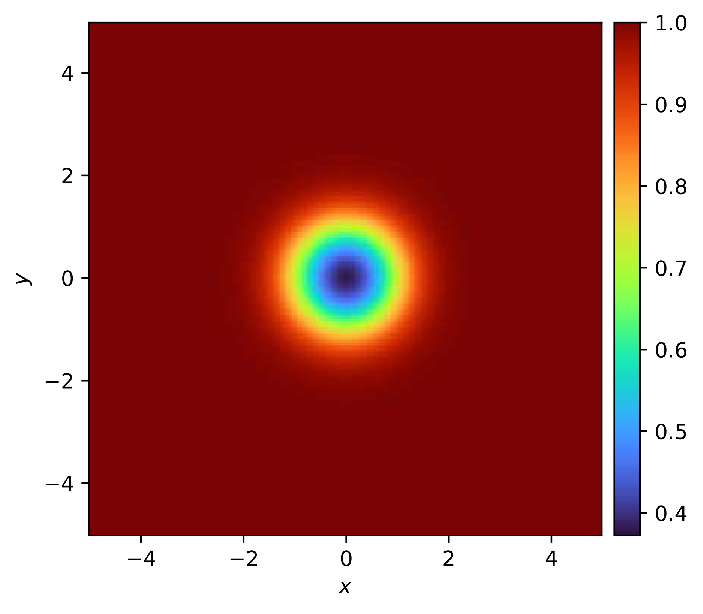}
			\caption{Pressure (NS, $\mathrm{Re}=5000$)}
		\end{subfigure}
		
		\caption{Example~\ref{ex:yee_vortex}: Density and pressure fields for the 2D isentropic vortex problem at $t=10$ on a $256\times256$ periodic grid.}
		\label{fig:yee_vortex_fields}
	\end{figure}
	
	\begin{figure}[htbp]
		\centering
		\begin{subfigure}[b]{0.49\textwidth}
			\centering
			\includegraphics[width=\textwidth]{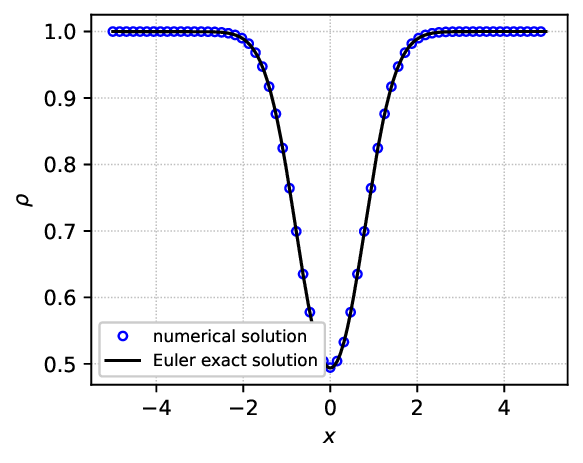}
			\caption{Density profile along $y=0$ (Euler)}
		\end{subfigure}
		\hfill
		\begin{subfigure}[b]{0.49\textwidth}
			\centering
			\includegraphics[width=\textwidth]{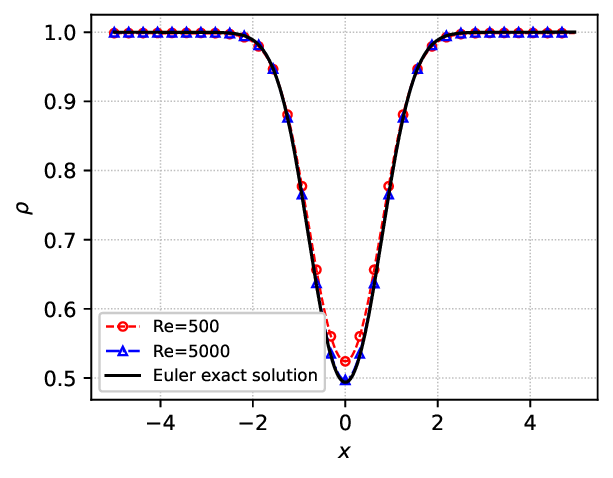}
			\caption{Density profile along $y=0$ (NS)}
		\end{subfigure}
		
		\vspace{0.3cm}
		
		\begin{subfigure}[b]{0.49\textwidth}
			\centering
			\includegraphics[width=\textwidth]{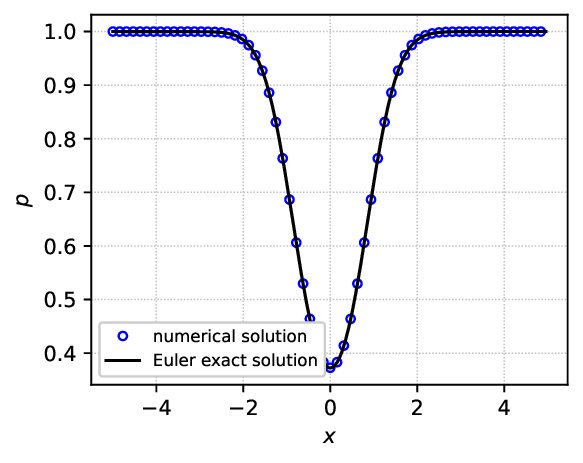}
			\caption{Pressure profile along $y=0$ (Euler)}
		\end{subfigure}
		\hfill
		\begin{subfigure}[b]{0.49\textwidth}
			\centering
			\includegraphics[width=\textwidth]{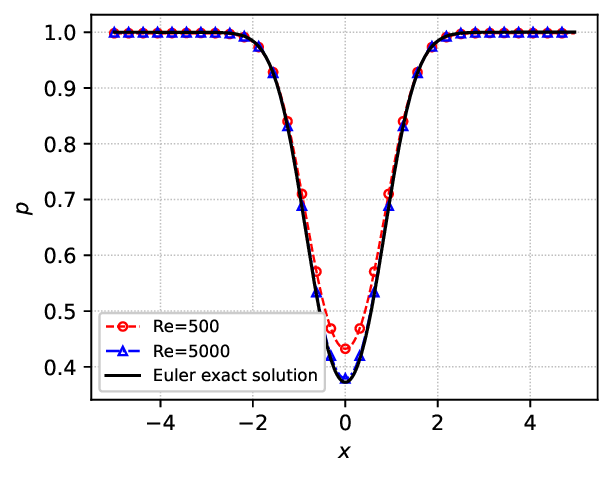}
			\caption{Pressure profile along $y=0$ (NS)}
		\end{subfigure}
		
		\caption{Example~\ref{ex:yee_vortex}: Density and pressure profiles along $y=0$ at $t=10$.}
		\label{fig:yee_vortex_sections}
	\end{figure}
	
	\begin{example}\label{ex:2d_radial}
		Next, we consider a 2D radially symmetric detonation-type problem \cite{WangZhangShuNing2012DetonationDG}. The computational domain is $[0,2]\times[0,2]$, and the initial data are
		\[
		(\rho, u, v, p, Y) =
		\begin{cases}
			(1, 0, 0, 80, 0.2), & x^2 + y^2 \le 0.36, \\
			(1, 0, 0, 10, 0.8), & otherwise.
		\end{cases}
		\]
		The present setup is a modified version of \cite{WangZhangShuNing2012DetonationDG}. The initial pressure and mass-fraction values are chosen so that the transformed variables remain real and the computation stays in the positivity range required by the present formulation. The reactive parameters are
		\(\gamma=1.2\), \(q=50\), \(K=2566.4\), and \(T_a=50\).
		Following the setup used in \cite{WangZhangShuNing2012DetonationDG,ZuoZhaoLin2024EntropyStableDGReactiveEuler}, solid-wall boundary conditions are imposed at the left and bottom boundaries. The solid walls are implemented by a mirror-reflective ghost-cell treatment: the normal component of the velocity is reversed across the wall, while the tangential velocity, density, pressure, and mass fraction are copied from the interior state. We set \(\mathrm{CFL}=0.8\) and compute the solution to \(t=0.16\). For the Navier--Stokes equations, we use \(\mathrm{Re}=200\), \(\Pr=0.72\), \(\mu=1/\mathrm{Re}\), \(\lambda=-2\mu/3\), \(\kappa=\mu c_p/\Pr\), \(c_p=\gamma/(\gamma-1)\), and \(\sigma=0.5\mu\). Figure~\ref{fig:2d_results1} shows the density and pressure contours, from which we see that the main circular wave structures are captured in both cases, while the viscous solution is smoother, as expected.
	\end{example}
	
	\begin{figure}[htbp]
		\centering
		\begin{subfigure}[b]{0.49\textwidth}
			\centering
			\includegraphics[width=\textwidth]{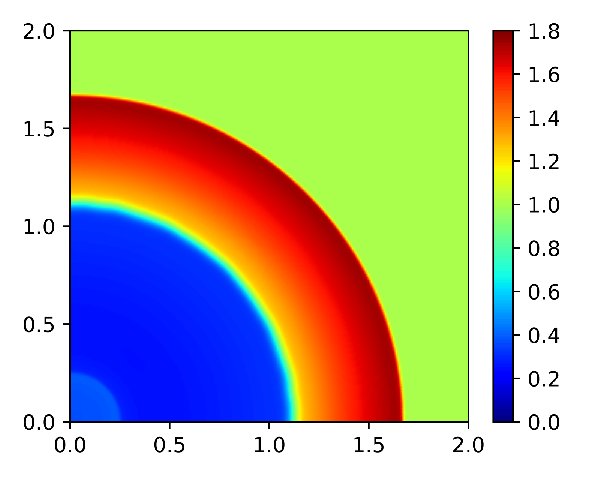}
			\caption{Density (Euler)}
		\end{subfigure}
		\hfill
		\begin{subfigure}[b]{0.49\textwidth}
			\centering
			\includegraphics[width=\textwidth]{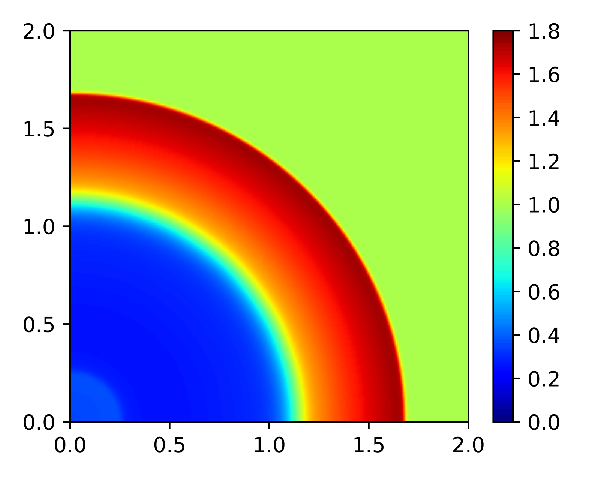}
			\caption{Density (NS)}
		\end{subfigure}
		
		\vspace{0.3cm}
		
		\begin{subfigure}[b]{0.49\textwidth}
			\centering
			\includegraphics[width=\textwidth]{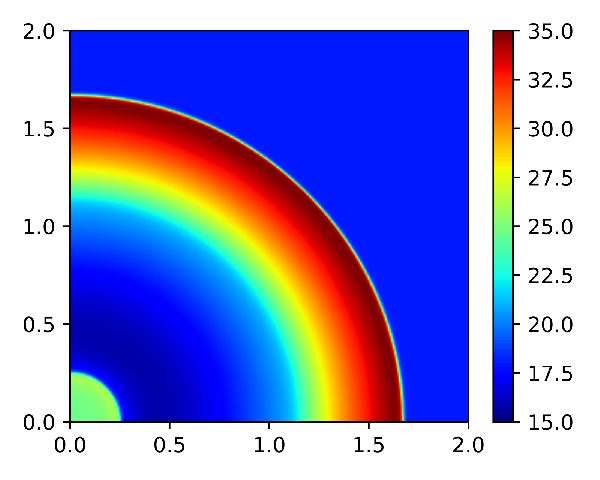}
			\caption{Pressure (Euler)}
		\end{subfigure}
		\hfill
		\begin{subfigure}[b]{0.49\textwidth}
			\centering
			\includegraphics[width=\textwidth]{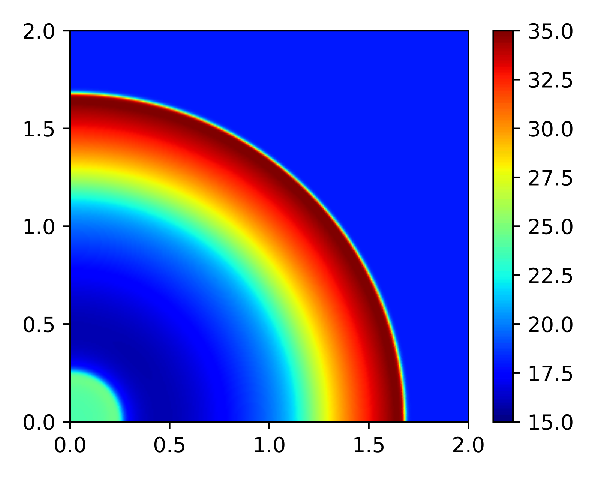}
			\caption{Pressure (NS)}
		\end{subfigure}
		
		\caption{Example~\ref{ex:2d_radial}: Contour plots of density and pressure at $t=0.16$ with $\delta_x=\delta_y=\frac{1}{100}$. $\mathrm{Re}=200$ for the Navier--Stokes equations.}
		\label{fig:2d_results1}
	\end{figure}
	
	\begin{example}\label{ex:2d_obstacles}
		Finally, we test our scheme for the reactive Euler equations by a detonation problem with multiple obstacles \cite{HuangShu2019PPTime,WangZhangShuNing2012DetonationDG,ZuoZhaoLin2024EntropyStableDGReactiveEuler}.
The computational domain is $[0,10]\times[0,10]$, with two solid obstacles located in $[1,3]\times[0,3]$ and $[5,10]\times[0,5]$. The initial data are
		\[
		(\rho, u, v, p, Y) =
		\begin{cases}
			(1, 0, 0, 80, 0.2), & x^2 + y^2 \le 0.36, \\
			(1, 0, 0, 10, 0.8), & otherwise.
		\end{cases}
		\]
		The Arrhenius reaction source is used with
		\(\gamma=1.2\), \(q=50\), \(K=2566.4\), and \(T_a=50\).
	Following \cite{HuangShu2019PPTime,WangZhangShuNing2012DetonationDG,ZuoZhaoLin2024EntropyStableDGReactiveEuler}, all outer boundaries and obstacle boundaries are treated as solid walls. The solid walls are implemented by a mirror-reflective ghost-cell treatment: the normal component of the velocity is reversed across the wall, while the tangential velocity, density, pressure, and mass fraction are copied from the interior state. We set $\delta_x=\delta_y=1/20$ and \(\mathrm{CFL}=0.8\). Figure~\ref{fig:2dresults2} shows the density and pressure contours at \(t=1\) and \(t=2\). It can be seen that our scheme remains stable and captures the main reflected and diffracted wave structures generated by the obstacles, which are consistent with the results in \cite{HuangShu2019PPTime,ZuoZhaoLin2024EntropyStableDGReactiveEuler}.
	\end{example}
	
	\begin{figure}[htbp]
		\centering
		\begin{subfigure}[b]{0.49\textwidth}
			\centering
			\includegraphics[width=\textwidth]{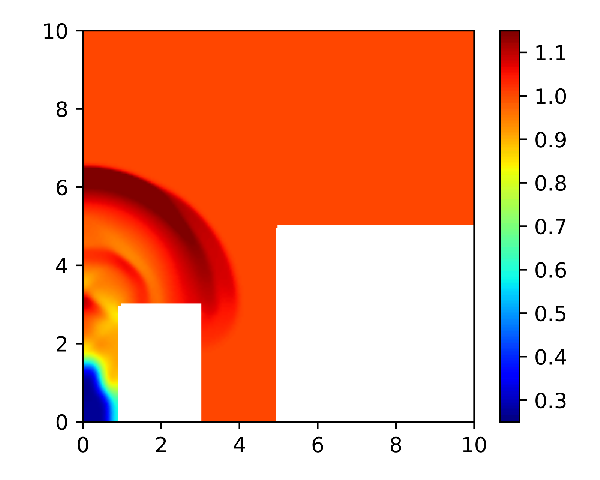}
			\caption{Density ($t=1$)}
		\end{subfigure}
		\hfill
		\begin{subfigure}[b]{0.49\textwidth}
			\centering
			\includegraphics[width=\textwidth]{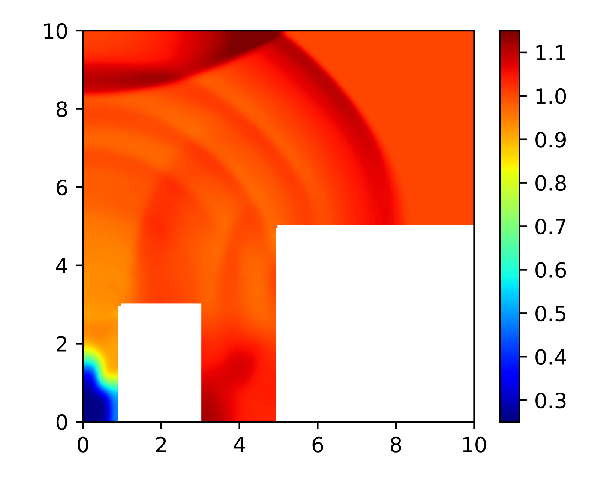}
			\caption{Density ($t=2$)}
		\end{subfigure}
		
		\vspace{0.3cm}
		
		\begin{subfigure}[b]{0.49\textwidth}
			\centering
			\includegraphics[width=\textwidth]{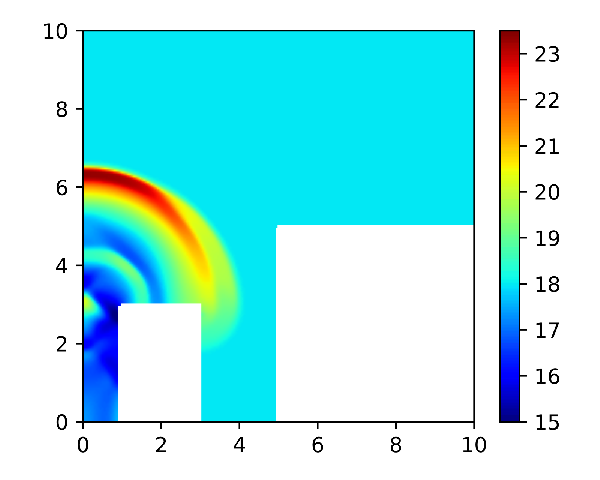}
			\caption{Pressure ($t=1$)}
		\end{subfigure}
		\hfill
		\begin{subfigure}[b]{0.49\textwidth}
			\centering
			\includegraphics[width=\textwidth]{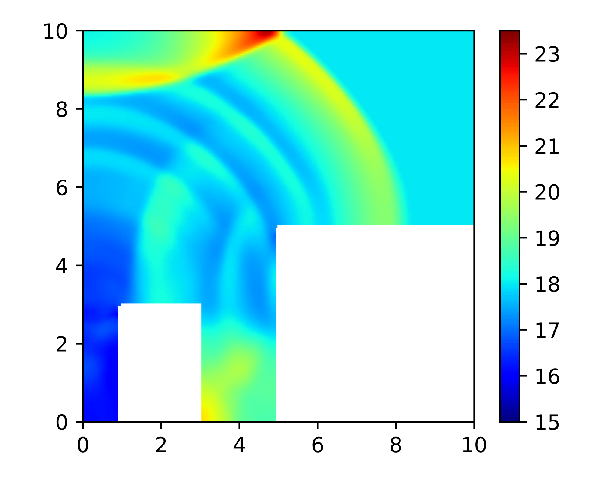}
			\caption{Pressure ($t=2$)}
		\end{subfigure}
		
		\caption{Example~\ref{ex:2d_obstacles}: Contour plots of density and pressure for the reactive  Euler equations at $t=1,2$ with $\delta_x=\delta_y=\frac{1}{20}$.}
		\label{fig:2dresults2}
	\end{figure}

	\section{Conclusions and remarks}
	\setcounter{equation}{0}
	\label{sec:conclusion}
	In this paper, we developed skew-symmetric formulations and energy stable finite difference schemes for the reactive Euler and reactive Navier--Stokes equations. Suitable transformed variables were introduced so that the convective parts of the governing equations can be written in skew-symmetric form. With this structure, continuous energy estimates were obtained for both inviscid and viscous reactive systems. The reaction term, the chemical energy, and the species diffusion were incorporated into the analysis in a way consistent with the sign of the energy estimate.
	
	Based on the continuous formulations, we constructed semi-discrete schemes using SBP operators. The non-dissipative part of linear upwind SBP operators was used to approximate the skew-symmetric convective terms, while the nonlinear SBP--WENO operator was employed only as an artificial dissipation mechanism. Under periodic boundary conditions, the resulting semi-discrete schemes satisfy energy estimates analogous to the continuous ones. Numerical experiments in one and two space dimensions verified the expected accuracy for smooth problems and demonstrated stable behavior for a shock tube, a double rarefaction, a reactive shock--wave interaction, two-dimensional contact--slip interactions, periodic vortex convection, and detonation-type tests.
	
	The present analysis is restricted to periodic boundary conditions for the discrete energy estimates. Some non-periodic and obstacle problems were included to assess the practical performance of the method, but a complete stability theory for such boundary treatments is left for future work. Further topics include the extension to three-dimensional computations, the construction of energy stable non-periodic boundary conditions for reactive flows, and the application of the skew-symmetric framework to more general multicomponent reacting systems.
	
	\bibliographystyle{elsarticle-num}
	\bibliography{ESS_ReactiveEuler}

\end{document}